\documentclass{article}
\usepackage{amsmath}
\usepackage{amsfonts}
\usepackage{bbold}
\usepackage[mathscr]{eucal}
\usepackage{amsthm}
\usepackage{amssymb}
\usepackage{hyperref}
\usepackage{cleveref}
\usepackage[margin=1.3in]{geometry}
\usepackage[dvipsnames]{xcolor}
\usepackage{graphicx}
\usepackage{mathtools}
\usepackage{multirow}
\usepackage[shortlabels]{enumitem}
\usepackage{import}
\usepackage{tikz-cd}

 
\usepackage{comment}

\newcounter{COMMENTS}

\makeatletter
\def\firsttwoletters#1#2\@nil{%
  \uppercase{#1}%
  \ifx\relax#2\relax
  \else
    \uppercase{\expandafter\@firstoftwo\expandafter{\expandafter\@car\string#2\@nil}}%
  \fi
}

\newcommand{\newComment}[3]{%
  \expandafter\newcommand\csname #1\endcsname[1]{
    \textbf{%
      \color{#3}(\firsttwoletters#2\@nil\theCOMMENTS)%
    }%
    \marginpar{\scriptsize\raggedright\textbf{%
      {\color{#3}(\expandafter\uppercase\expandafter{\firsttwoletters#2\@nil}\theCOMMENTS)#1: 
      }} ##1}%
    \stepcounter{COMMENTS}%
  }%
  \expandafter\newcommand\csname #2\endcsname[1]{{\color{#3} ##1}}
}
\makeatother

\newComment{Beth}{bb}{Plum}
\newComment{Robbie}{rl}{cyan}
\newComment{George}{gd}{orange}
\newComment{Hannah}{hh}{WildStrawberry}


\usepackage{tcolorbox} 
\newlist{todolist}{itemize}{2}
\setlist[todolist]{label=$\square$}
\usepackage{pifont}
%

\newtheorem{thm}{Theorem}
\newtheorem{lem}[thm]{Lemma}
\newtheorem{cor}[thm]{Corollary}
\newtheorem{prop}[thm]{Proposition}

\theoremstyle{definition}
\newtheorem{DEF}[thm]{Definition}
\newtheorem{remark}[thm]{Remark}


\newenvironment{customthm}[1]
  {\innercustomthm}
  {\endinnercustomthm}

\DeclareMathOperator{\homeo}{Homeo}
\DeclareMathOperator{\aut}{Aut}
\DeclareMathOperator{\Homeo}{Homeo}

\DeclareMathOperator{\Stab}{Stab}

\newcommand{\Z}{\mathbb{Z}}
\newcommand{\G}{\Gamma}
\newcommand{\N}{\mathbb{N}}

\newcommand{\cP}{\mathcal{P}}
\newcommand{\cQ}{\mathcal{Q}}
\newcommand{\cR}{\mathcal{R}}

\newcommand{\fP}{\mathfrak{P}}


\newcommand{\defeq}{:=}
\newcommand{\Hoan}{\Homeo(X_{\alpha,n})}

\begin{document}
\title{Graphical models for topological groups: A case study on countable Stone spaces}
\author{Beth Branman, George Domat, Hannah Hoganson and Robert Alonzo Lyman}


\maketitle

\begin{abstract}
  By analogy with the Cayley graph of a group
  with respect to a finite generating set
  or the Cayley--Abels graph of a totally disconnected, locally compact group,
  we detail countable connected graphs
  associated to Polish groups
  that we term Cayley--Abels--Rosendal graphs.
  A group admitting a Cayley--Abels--Rosendal graph
  acts on it continuously, coarsely metrically properly
  and cocompactly by isometries of the path metric.
  By an expansion of the Milnor--Schwarz lemma,
  it follows that the group is generated by a coarsely bounded set
  and the group equipped with a word metric
  with respect to a coarsely bounded generating set
  and the graph are quasi-isometric.
  In other words, groups admitting Cayley--Abels--Rosendal graphs
  are topological analogues of finitely generated groups. Our goal is to introduce this topological perspective on the work of Rosendal to a geometric group theorist.

  We apply these concepts to homeomorphism groups of countable Stone spaces.
  We completely characterize when these homeomorphism groups
  are coarsely bounded, when they are locally bounded (all of them are),
  and when they admit a Cayley--Abels--Rosendal graph,
  and if so produce a coarsely bounded generating set.
\end{abstract}

\section{Introduction}

A \emph{Cayley--Abels graph} for a totally disconnected, locally compact group $G$
is a connected, locally finite graph $\Gamma$
on which $G$ acts continuously, vertex transitively, and with compact stabilizers.
Such a group $G$ is then compactly generated
and in fact quasi-isometric to $\Gamma$
when equipped with a word metric with respect to a compact generating set.
(This metric makes $G$ discrete,
but one can exchange it with a quasi-isometric metric
additionally generating the topology on $G$.)
Cayley--Abels graphs are the extension
of the notion of a Cayley graph (with respect to a finite generating set)
to locally compact groups.

We expand the notion further to the setting of Polish groups,
introducing the notion of a \emph{Cayley--Abels--Rosendal graph.}
Briefly for experts,
a Cayley--Abels--Rosendal graph for a group $G$
is a connected, countable graph $\Gamma$
on which $G$ acts continuously,
vertex transitively,
with coarsely bounded stabilizers
and finitely many orbits of edges.
The group $G$ is then generated by a coarsely bounded set,
and with respect to the word metric associated to such a generating set
becomes quasi-isometric to $\Gamma$.
Further, the word metrics for any two coarsely bounded generating sets are quasi-isometric, giving $G$ a well-defined quasi-isometry type. Just as Cayley graphs are a powerful way to study finitely generated groups geometrically,
so too do we believe that Cayley--Abels--Rosendal graphs
are fundamental to the large-scale geometric study of Polish groups.

Rosendal~\cite{Rosendal},
inspired by work of Roe~\cite{Roe} expanded the framework
of coarse geometry from the realm of discrete or locally compact groups
to the wider setting of Polish topological groups. It is also our intention in this paper to make the fundamentals of this theory
more accessible to the mathematician
familiar with the basic techniques of geometric group theory.

Here are the salient features of this work, intended for a geometric group theorist.
First: a \emph{Polish} topological space is one that is \emph{separable,}
having a countable dense set,
and \emph{completely metrizable}---that is, not just admitting a metric
generating the topology,
but one that is additionally complete.

Now, there are \emph{a priori} many topologies one can put on a group $G$.
As geometric group theorists, we are interested in the \emph{metrizable}
topologies,
since $G$ should be able to act on itself by isometries.
In fact, this latter observation suggests that we are really interested
in \emph{left-invariant} (or right-invariant) \emph{compatible metrics}
on $G$;
that is, those metrics that generate the given topology,
which is then \emph{a fortiori} metrizable,
and that are compatible with the group structure.
As it turns out, by the Birkhoff--Kakutani metrization theorem~\cite{Birkhoff, Kakutani},
every metrizable topological group admits a compatible left-invariant metric,
and moreover a topological group is metrizable if and only if it is Hausdorff
and first countable (i.e.\ every point, or equivalently just the identity,
has a countable neighborhood basis).

Even allowing that (metrizable) topological groups are interesting,
it is not immediately clear from a geometric group theory point of view why
Polish groups might be the ``right'' objects to consider.
Indeed, an abstract group $G$ may admit many metrizable topologies.
Let us offer this suggestion:
the \emph{discrete} topology on a group $G$ is Polish only when
the required countable dense subset of $G$ is $G$ itself---so for Polish groups,
the discrete groups are countable.
As it turns out, for Polish groups,
it is also true that the countable groups are discrete:
a topological group $G$ has an isolated point
if and only if the identity is isolated if and only if it is discrete,
but a nonempty complete metric space with no isolated points is uncountable.

Geometric group theorists remember that although one can define a ``word metric'' on a discrete group $G$
as soon as one has a generating set for $G$,
in order for two word metrics on a group $G$
to yield quasi-isometric metric spaces,
one needs in general for these generating sets to be \emph{finite.}
The reader familiar with locally compact groups may realize that \emph{finite}
may profitably be replaced with \emph{compact,}
but may wonder what one does without local compactness.

The above framing of the ``fundamental observation'' of geometric group theory---i.e. the Milnor--Schwarz Lemma---actually sells it short:
if one has a (metrically) properly discontinuous, cobounded action of a group $G$ on a geodesic metric space $X$,
one concludes that $G$ is \emph{in fact} finitely generated---with care,
one can even extract a finite generating set---and $G$
equipped with \emph{any} word metric with respect to some finite generating set
is quasi-isometric to the space $X$.
(For finitely generated groups, one can always consider for $X$ a Cayley graph for $G$ with respect to some finite generating set. This space $X$ is then additionally \emph{proper,} that is, closed balls are compact,
so one can replace ``metrically properly discontinuous'' with ``properly discontinuous'' and ``cobounded'' with ``cocompact'' with no loss.)

As it turns out, the correct enlargement of finiteness or compactness is the notion of \emph{coarse boundedness,} and that with this expanded notion, one still has a Milnor--Schwarz Lemma.
This allows one to compute quasi-isometry types of those Polish groups that admit them.
Moreover, the only coarsely bounded subsets of a discrete group are the finite ones,
so this expanded field is really a conservative extension of the theory:
for countable discrete groups,
it continues to pick out the finitely generated ones.

To illustrate their use,
we construct Cayley--Abels--Rosendal graphs
for a family of homeomorphism groups.
A \emph{Stone space} is compact, Hausdorff, and totally disconnected.
Stone spaces are dual to Boolean algebras,
and second countable Stone spaces show up as spaces of ends of
surfaces and locally finite graphs.
The group of homeomorphisms of a Stone space
is a non-Archimedean Polish group,
and may profitably be thought of as a ``$0$-dimensional'' analogue
of the mapping class group of a locally finite infinite graph or surface of infinite type.

Countable Stone spaces are classified up to homeomorphism by two parameters:
a positive integer $n$ and a countable ordinal $\alpha$. Here the ordinal $\alpha$ refers to the Cantor-Bendixson rank of the space and $n$ denotes the number of points with maximal Cantor-Bendixson rank. We will write $X_{\alpha,n}$ to denote the corresponding countable Stone space. For more details, see \Cref{ssec:ctblestonespaces}. Recall that ordinals are either \emph{successor} ordinals,
being of the form $\alpha = \beta + 1$,
or they are not, in which case they are \emph{limit} ordinals.

\begin{customthm}{A}
  \label{thm:mainclassification}
  Let $X_{\alpha,n}$ be a countable Stone space with $\alpha>0$.
  The group $\Homeo(X_{\alpha,n})$
  is always locally bounded and is
  \begin{itemize}
  \item coarsely bounded if and only if $n = 1$, and
  \item boundedly generated but not coarsely bounded if and only if $n > 1$ and $\alpha$ is a successor ordinal.
  \end{itemize}
  Moreover, if $\Homeo(X_{\alpha,n})$ is boundedly generated,
  we compute its quasi-isometry type by constructing
  a Cayley--Abels--Rosendal graph for it.
\end{customthm}

When $\alpha=0$, $X_{\alpha,n}$ is a finite discrete set, so its homeomorphism group is coarsely bounded.

Some parts of \Cref{thm:mainclassification} follow from the work of Mann--Rafi \cite[Theorem 1.5]{MR2023},
and other parts provide new proofs of some of their work.
Just as every quotient of a finitely generated group is finitely generated,
so too is every continuous quotient of a boundedly generated group boundedly generated, see \Cref{lem:countgen}.
Similarly, every continuous quotient of a coarsely bounded group is coarsely bounded. The mapping class group of a surface acts continuously on its space of ends, which is a Stone space.
Since Mann--Rafi prove that the mapping class group of a genus-zero surface with end space homeomorphic to $X_{\alpha,n}$ is boundedly generated when $\alpha$ is a successor ordinal 
and coarsely bounded when $n = 1$ with no stipulation on $\alpha$,
these parts of \Cref{thm:mainclassification} follow from \cite[Theorem 1.5]{MR2023}.
On the other hand, since the properties of being \emph{not} boundedly generated and locally bounded are easily seen to be inherited from continuous quotients,
these parts of \Cref{thm:mainclassification} imply the corresponding statements for the mapping class group of \emph{every} surface with appropriate countable end space.
In all cases, the construction of Cayley--Abels--Rosendal graphs is new.

We end this introduction with a brief survey and list of examples to which this framework of Rosendal applies. First, as previously mentioned, this framework subsumes the setting of locally compact groups, e.g. see \cite{CdH2016,CCMT2015,Lederle2022} among many others. For those working in low-dimensional topology, a natural Polish (but not locally compact) group is the homeomorphism group of a manifold. These groups were seen to admit a well-defined quasi-isometry type when the surface is compact in \cite{MR2018}, (see also \cite{Vlamis2024} for some non-compact surface examples). Related to these groups we have the mapping class group of a surface: The quotient of the homeomorphism group by the connected component of the identity. In the finite-type setting, the quotient topology is discrete and these groups are finitely generated. However, when the underlying surface is of infinite type (does not have finitely generated fundamental group), the mapping class group is not discrete nor locally compact, but is still Polish. These \emph{big} mapping class groups were studied using this framework in \cite{MR2023}, see also \cite{RM2023,HKR2023,SC2024}. Along the same lines, the coarse geometry of ``big" analogs of $\operatorname{Out}(F_{n})$ and mapping class groups of related three manifolds were studied in \cite{DHK2022,DHK2023, DHK2023_2,Udall2024}. 

Beyond the world of low-dimensional topology, isometry groups of infinite dimensional hyperbolic spaces and separable Hilbert spaces \cite{Duchesne2023}, general Banach-Lie groups \cite{ADM2022}, and groups of measure-preserving transformations \cite{LeMaitre2021} are all Polish groups. The theory applies to automorphism groups of countable simplicial complexes \cite[Chapter 6]{Rosendal}. In fact, the isometry group of \emph{any} complete, separable metric space can be given the structure of a Polish group \cite[Section 9.B Example 9]{Kechris1995} and so can be studied via this framework. 

We do remark that the general construction of Cayley-Abels-Rosendal graphs in \Cref{prop:CARgraph} may not apply to all of these examples as it requires the existence of an \emph{open}, coarsely bounded subgroup. The construction is readily applicable to \emph{non-Archimedean} Polish groups, which are those with a neighborhood basis of the identity given by open subgroups. Non-Archimedean Polish groups turn out to be exactly the closed subgroups of the symmetric group on a countable set. Among the examples above, these include: Big mapping class groups, big $\operatorname{Out}(F_{n})$ analogs, and automorphism groups of countable simplicial complexes, as well as homeomorphism groups of Stone spaces. 
That being said, the general approach that we take in \Cref{sec:HomeoCtbleStone} of constructing a graph or space and using the Milnor-Schwarz Lemma (\Cref{milnorschwarz}) to build a quasi-isometry between the space and the group in question is applicable to any Polish group. As such, we hope that our exposition is beneficial to anyone attempting to study the coarse geometry of a Polish group, even when the group is not non-Archimedean.


\subsection*{Acknowledgments} 
Thank you to Christian Rosendal and Carolyn Abbott for pointing out two omissions in the first version of this paper. Thank you to the referee for helpful comments that have improved the exposition. GD was partially supported by NSF DMS-2303262. BB was partially supported by NSF DMS-1839968.  HH was supported by NSF DMS-2303365. RAL was partially supported by NSF DMS-2202942.

\section{Coarsely bounded subsets and the Milnor--Schwarz Lemma}
A subset $A$ of a topological group $G$ is said to be \emph{coarsely bounded in $G$}
if whenever $G$ acts continuously by isometries on a (pseudo-)metric space $X$,
some, and hence every, orbit of $A$ is bounded in $X$. 

Let us remark that if $A \subset G$ is finite (or precompact),
then it is coarsely bounded.
The converse is also true for discrete (or locally compact) groups.
If $G$ is finitely generated, this follows by examining the action of an infinite subset $A$ on some Cayley graph of $G$ with respect to a finite generating set. Although the definition above is conceptually clear,
it is functionally difficult to verify.
Fortunately, Rosendal gives us the following formulation.
We give a proof for completeness.

\begin{lem}[cf. Proposition 2.15 of~\cite{Rosendal}]\label{lem:RosendalsCriterion}
    Suppose that $G$ is metrizable and that for every identity neighborhood $U$,
  there exists a countable subset $C$ of $G$
  such that $G$ is generated by $U \cup C$.
  (This holds for $G$ Polish, for instance.)
  
  A subset $A$ of $G$ is coarsely bounded in $G$
  if and only if for every identity neighborhood $U \subset G$,
  there exists a finite subset $F \subset G$
  and $N \in \mathbb{N}$ such that
  \[
    A \subset {(FU)}^N = \{f_1u_1\ldots f_\ell u_\ell : \ell \le N, f_i \in F\text{ and } u_i \in U \}.
  \]
\end{lem}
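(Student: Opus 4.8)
The plan is to prove the two implications separately, dispatching the ``if'' direction quickly and concentrating on the ``only if'' direction, whose real difficulty is manufacturing a \emph{continuous} left-invariant pseudometric with the right large-scale behavior.

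For the ``if'' direction, assume the covering condition and let $G$ act continuously by isometries on a pseudometric space $(X,\partial)$ with basepoint $x_0$. Continuity of the orbit map $g\mapsto g\cdot x_0$ at the identity yields an identity neighborhood $U$ with $\partial(u\cdot x_0,x_0)\le 1$ for all $u\in U$. Applying the hypothesis to \emph{this} $U$ produces $F$ and $N$ with $A\subseteq (FU)^N$. Setting $R=\max_{f\in F}\partial(f\cdot x_0,x_0)$ and telescoping along a factorization $a=f_1u_1\cdots f_\ell u_\ell$ with $\ell\le N$ (using that left translations act by isometries, so each successive ``step'' changes $\partial(\,\cdot\,,x_0)$ by at most $1+R$) gives $\partial(a\cdot x_0,x_0)\le N(1+R)$ uniformly in $a\in A$. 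Hence the orbit of $A$ is bounded, and $A$ is coarsely bounded.

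For the ``only if'' direction I would argue the contrapositive: fix an identity neighborhood $U$ for which \emph{no} finite $F$ and $N$ satisfy $A\subseteq (FU)^N$, and build one continuous left-invariant pseudometric $d$ in which $A$ is unbounded; left multiplication on $(G,d)$ is then a continuous isometric action with unbounded $A$-orbit (the orbit of the identity is $A$ itself, with $d(e,a)=\ell(a)$), so $A$ is not coarsely bounded. After shrinking $U$---legitimate, since containment in a smaller $(FU')^N$ is a stronger conclusion---I may assume $U$ is open and symmetric, and I choose a symmetric neighborhood basis $U=V_1\supseteq V_2\supseteq\cdots$ with $V_{i+1}^2\subseteq V_i$. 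Using the standing hypothesis I fix a countable symmetric set $C=\{c_1^{\pm},c_2^{\pm},\dots\}$ with $\langle U\cup C\rangle=G$. I then define $d(g,h)=\ell(g^{-1}h)$, where $\ell(g)$ is the infimal total weight over factorizations of $g$ into letters from $U\cup C$, a letter in $V_i\setminus V_{i+1}$ costing $2^{-i}$ and a letter $c_j^{\pm}$ costing $j$. Because $U\cup C$ generates $G$, every $\ell(g)$ is finite; subadditivity and symmetry of the weights make $d$ a left-invariant pseudometric; and since $\ell(g)\le 2^{-i}$ for $g\in V_i$, the function $\ell$ is continuous at the identity, so $d$ is continuous.

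The heart of the argument---and the step I expect to be the main obstacle---is the \emph{ball-containment} estimate: for each radius $r$, $\{g:\ell(g)\le r\}\subseteq (F_r U)^{N_r}$ with $F_r$ finite and $N_r\in\mathbb{N}$. Granting it, a $d$-bounded $A$ would lie in some $(F_r U)^{N_r}$, contradicting the choice of $U$, so $A$ is $d$-unbounded and we are done. To prove the estimate, note that a near-minimal factorization of weight $\le r$ can use $c_j^{\pm}$ only for $j\le r$ (each costs $\ge 1$), so its $C$-letters lie in the finite set $F_r=\{c_j^{\pm}:j\le r\}$ and number at most $r$; these split the word into at most $r+1$ runs of $U$-letters, each of total weight $\le r$. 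The genuine subtlety is that such a run may contain \emph{arbitrarily many} very deep, hence very cheap, letters, so I cannot bound its length directly. This is exactly where continuity (which forces small-scale moves to be cheap) fights against bounded-power containment (which is threatened by many cheap moves), and the resolution is the chain estimate underlying the Birkhoff--Kakutani theorem: a subword of sufficiently small weight represents an element of $U$, because ``cheap implies deep implies inside $U$.'' Partitioning each run greedily into $O(r)$ consecutive sub-blocks of small weight then places the run inside a fixed power $U^{m(r)}$, and reassembling the runs around the $C$-letters---padding with the identity to alternate $F_r$- and $U$-letters---exhibits $g$ in $(F_r' U)^{N_r}$ with $F_r'=F_r\cup\{e\}$ and $N_r=O(r^2)$, completing the estimate and the proof.
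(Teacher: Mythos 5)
Your proof has the right architecture, and it is organized genuinely differently from the paper's. The paper proves a three-way equivalence: both coarse boundedness and the $(FU)^N$-criterion are shown equivalent to an exhaustion property (every nested open exhaustion $V_1 \subset V_2 \subset \cdots$ with $V_nV_n \subset V_{n+1}$ eventually swallows $A$), and the hard implication is delegated to the Birkhoff--Kakutani-type construction of \Cref{lem:Rosendal213}, applied to a symmetrized, subsequenced, downward-completed version of a given exhausting sequence. You instead argue the contrapositive directly, building a single weighted word pseudometric adapted to the bad neighborhood $U$; the growing weights $j$ on the letters $c_j^{\pm}$ are what force a $d$-bounded set to use only finitely many of them, which is exactly where the finite set $F$ comes from, and your ball-containment estimate $\{g : \ell(g)\le r\}\subseteq (F_rU)^{N_r}$ plays the role of the paper's appeal to the conclusion of \Cref{lem:Rosendal213} (that $d$-bounded sets lie in some term of the sequence). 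Your easy direction is, modulo packaging, the same telescoping argument as the paper's. Both routes run on the same engine---the chaining lemma behind the Birkhoff--Kakutani theorem---but yours is self-contained and makes the mechanism visible, while the paper's factors it through a reusable lemma.

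There is one concrete error to repair: you choose your basis with $V_{i+1}^2\subseteq V_i$, but the chain estimate you invoke (``a subword of sufficiently small weight represents an element of $U$'') requires the triple condition $V_{i+1}^3\subseteq V_i$---this is precisely the hypothesis $U_n^3\subset U_{n+1}$ in \Cref{lem:Rosendal213}, and the paper's own proof explicitly passes to a subsequence to arrange it. The standard (Frink/Birkhoff) induction splits a word of weight $w$ into a prefix of weight $\le w/2$, a single letter of weight $\le w$, and a suffix of weight $<w/2$, and needs the three resulting pieces, which live one level deeper, to multiply back into the current level; with only squares the inductive constant doubles at each level, so the bound degrades with word length---which is exactly the ``arbitrarily many cheap letters'' failure mode your estimate must rule out. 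Whether the squares condition alone still implies the estimate is a delicate question (it is the quasi-metric chaining problem at the critical constant) and is in any case not furnished by the lemma you cite. The fix is immediate: by continuity of multiplication choose the basis so that $V_{i+1}^3\subseteq V_i$. Then any block of weight $<1/2$ lies in $V_1=U$, your greedy decomposition gives $O(r)$ blocks per run and hence $N_r=O(r^2)$ after padding with the identity, and the rest of your argument (continuity of $\ell$ at the identity, left-invariance, continuity of the left-multiplication action on $(G,d)$, and the final contradiction with the choice of $U$) goes through as written.
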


In fact, we will show that both coarse boundedness and Rosendal's criterion
are equivalent, for $G$ as in the statement,
to the following property:

For every nested open sequence $V_1 \subset V_2 \subset \cdots$ that exhausts $G$
and such that $V_nV_n \subset V_{n+1}$ for each $n$,
we have that $A \subset V_n$ for some $n$.

As we will see shortly, coarse boundedness is readily implied by this property,
but showing that coarse boundedness implies it requires \emph{constructing} (pseudo-)metrics adapted to a given sequence.
To do so in the proof below,
we rely on the following variant of the lemma that Birkhoff~\cite{Birkhoff} uses in his proof of the Birkhoff--Kakutani metrization theorem (see also~\cite{Kakutani}).

\begin{lem}[Lemma 2.13 of~\cite{Rosendal}]\label{lem:Rosendal213}
Suppose $\cdots \subset U_{-1} \subset U_0 \subset U_1 \subset \cdots$ 
is a sequence of symmetric open identity neighborhoods exhausting $G$
and satisfying that $U_n^3 \subset U_{n+1}$ for all $n$.

If we write
\[
\delta(f,g) = \inf\{ 2^n : g^{-1}f \in U_n \}
\]
and
\[
d(f,g) = \inf\left\{ \sum_{i = 1}^n \delta(h_{i-1}, h_i) : f = h_0, h_1, \ldots, h_n = g \in G \right\},
\]
then $d$ is a continuous, left-invariant pseudometric on $G$
satisfying $\frac{1}{2}\delta(f,g) \le d(f, g) \le \delta(f, g)$ for all $f$, $g \in G$.
It is a metric (i.e. separates points)
when the $U_i$ form a neighborhood basis for the identity.
\end{lem}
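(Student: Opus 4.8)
The plan is to verify the elementary metric axioms and left-invariance directly from the definition, to obtain the upper bound for free, and to concentrate the real work on the lower bound, which is the classical Birkhoff chain estimate. First I would record the easy facts about $\delta$. Since each $U_n$ is symmetric, $g^{-1}f \in U_n \iff f^{-1}g \in U_n$, so $\delta$ is symmetric; and since $\delta(f,g)$ depends only on $g^{-1}f$, left translation by any $k \in G$ sends $(f,g)$ to a pair with the same difference, so $\delta(kf,kg)=\delta(f,g)$. Because $e \in U_n$ for every $n$ and $2^n \to 0$ as $n \to -\infty$, we get $\delta(f,f)=0$, and the upper bound $d(f,g) \le \delta(f,g)$ is immediate from the length-one chain $f=h_0,h_1=g$. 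That $d$ is a pseudometric then follows formally: reversing a chain with symmetry of $\delta$ gives $d(f,g)=d(g,f)$, concatenating an $f$-to-$g$ chain with a $g$-to-$h$ chain gives the triangle inequality, and $d(f,f)\le\delta(f,f)=0$. Left-invariance of $d$ follows by left-translating chains, as this preserves each summand $\delta(h_{i-1},h_i)$.

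The crux is the lower bound $d(f,g) \ge \tfrac12\,\delta(f,g)$, equivalently the chain estimate $\delta(h_0,h_n) \le 2\sum_{i=1}^{n}\delta(h_{i-1},h_i)$ for every chain $h_0,\dots,h_n$. I would prove this by induction on $n$, writing $L=\sum_{i=1}^n \delta(h_{i-1},h_i)$ for the total length and assuming $L>0$. Choose the largest index $m$ with $\sum_{i=1}^{m}\delta(h_{i-1},h_i) \le L/2$; then the initial segment has length $\le L/2$ and, by maximality, the segment from $h_{m+1}$ to $h_n$ has length $< L/2$, while the single step satisfies $\delta(h_m,h_{m+1}) \le L$. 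Applying the inductive hypothesis to the two segments yields $\delta(h_0,h_m) \le L$ and $\delta(h_{m+1},h_n) \le L$. Letting $k$ be the largest integer with $2^k \le L$, each of these three $\delta$-values being $\le L < 2^{k+1}$ forces the corresponding difference into $U_k$ (the $U_j$ are nested, so $\delta(a,b)\le L$ gives $b^{-1}a \in U_k$). Multiplying the three differences and using the defining hypothesis $U_k^3 \subset U_{k+1}$ gives $h_n^{-1}h_0 \in U_{k+1}$, whence $\delta(h_0,h_n) \le 2^{k+1} \le 2L$, completing the induction. (The degenerate case $L=0$ is handled by noting $\bigcap_j U_j$ is a subgroup, since its cube lands in every $U_{j+1}$.) This estimate is exactly where the hypothesis $U_n^3 \subset U_{n+1}$ enters, and I expect it to be the \emph{main obstacle}.

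Finally I would treat continuity and point-separation. For continuity it suffices, by left-invariance together with the inequality $|d(f,g)-d(f',g')| \le d(f,f')+d(g,g') = d(e,f^{-1}f')+d(e,g^{-1}g')$ and continuity of the group operations, to check that $h \mapsto d(e,h)$ is continuous at $e$; but given $\epsilon>0$ one picks $n$ with $2^n<\epsilon$, and then $h \in U_n$ implies $d(e,h) \le \delta(e,h) \le 2^n < \epsilon$, so $U_n$ is the required neighborhood. For the last assertion, if the $U_i$ form a neighborhood basis at the identity then $\bigcap_i U_i = \{e\}$, so for $f \ne g$ we have $g^{-1}f \notin U_i$ for some $i$; nestedness gives $\delta(f,g) \ge 2^{i+1}>0$, and the lower bound yields $d(f,g) \ge \tfrac12\,\delta(f,g)>0$, so $d$ separates points.
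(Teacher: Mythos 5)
Your proof is correct and follows essentially the same route as the paper's (which is only a sketch deferring to Birkhoff): the routine verification of the pseudometric axioms and $d \le \delta$, plus the inductive chain-splitting argument that uses $U_k^3 \subset U_{k+1}$ to force $\delta(h_0,h_n) \le 2\sum_i \delta(h_{i-1},h_i)$. In fact your write-up supplies the details the paper elides --- the choice of the split index $m$, the dyadic index $k$ with $2^k \le L < 2^{k+1}$, and the degenerate case $L = 0$ --- so it is a faithful completion of the paper's sketch rather than a different argument.
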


\begin{proof}[Sketch of proof]
We refer the reader to Birkhoff~\cite{Birkhoff} for more complete details.
That $d$ is a continuous, left-invariant pseudometric on $G$ is clear:
nonnegativity, continuity and symmetry of $d$ follow from these facts for $\delta$,
which in turn follow from the fact that each $U_n$ is symmetric and open,
and the triangle inequality for $d$ follows by realizing one may 
concatenate the sequences $h_i$.
Similarly the inequality $d(f, g) \le \delta(f, g)$ is clear,
as is the fact that $\delta$ separates points when the $U_n$
form a neighborhood basis.

It is the reverse inequality which uses the assumption that $U_n^3 \subset U_{n+1}$:
If $f = h_0,h_1,\ldots,h_n = g$ is a sequence,
we may argue inductively that there is a $k$ satisfying $1 \le k \le n$
such that $\delta(f, h_{k-1})$, $\delta(h_{k - 1}, h_k)$ and $\delta(h_k, g)$
are all at most equal to $\epsilon = \sum_{i = 1}^n \delta(h_{i - 1}, h_i)$.
But from the containment $U_n^3 \subset U_{n+1}$ and the definition of $\delta$,
it follows that $\delta(f, g)$ can be at most twice $\epsilon$.
\end{proof}

\begin{proof}[Proof of \Cref{lem:RosendalsCriterion}]
As mentioned before the proof,
we show the equivalence of both coarse boundedness and Rosendal's criterion to
the following property:

For every nested open sequence $V_1 \subset V_2 \subset \cdots$ that exhausts $G$
and such that $V_nV_n \subset V_{n+1}$ for each $n$,
we have that $A \subset V_n$ for some $n$.

Indeed, if $A$ satisfies the above property
and $(X,d)$ is a metric space with a continuous $G$-action,
let $x_0 \in X$ be a basepoint
and consider the sets $V_n = \{ g \in G : d(x_0, g.x_0) < 2^n \}$.
These are nested, open and satisfy $V_n V_n \subset V_{n+1}$,
whence the $A$-orbit of $x_0$ is $d$-bounded.
Since $(X,d)$ and $x_0$ were arbitrary, we conclude that $A$ is coarsely bounded.

An $A$ satisfying the above property also satisfies Rosendal's criterion:
given $U$ an open identity neighborhood in $G$,
there exists a countable set $\{x_0, x_1, \ldots \}$ that,
together with $U$, generates $G$.
Write $F_n = \{x_0,\ldots, x_n\}$.
Consider the sets $V_n = (F_nU)^{2^n}$.
The sets $V_n$ are constructed for us to apply the above property;
one such $V_n$ contains $A$, proving Rosendal's criterion.

Next, suppose $A$ satisfies Rosendal's criterion and consider a nested sequence $V_1 \subset V_2 \subset \cdots$.
As $V_k$ is an identity neighborhood for some $k$,
we may take $F$ and $n$ so that $A \subset (FV_k)^n$.
Since the sets $V_i$ exhaust, $F$ is contained in some $V_m$ for $m \ge k$,
and repeatedly applying the assumption that $V_m V_m \subset V_{m+1}$
allows us to conclude that in fact $A \subset V_{m + n}$.

Finally, suppose that $A$ is coarsely bounded
and take a nested sequence as in the statement of the property.
We transform it to a new sequence to which we may apply \Cref{lem:Rosendal213}.
If the $V_i$ are not symmetric, we may consider $U_i = V_i \cap V_i^{-1}$.
These sets are symmetric, open, exhaust $G$
and still satisfy $U_n U_n \subset U_{n+1}$.
Ignoring finitely many of the $U_i$ we may assume $1 \in U_1$,
and by passing to an appropriate subsequence
we may upgrade to the property that $U_n^3 \subset U_{n+1}$.
We may complete the sequence $U_i$ in the negative direction
so that the $U_i$ form a neighborhood basis for the identity.
(Indeed, Birkhoff~\cite{Birkhoff} proves that any first-countable Hausdorff topological group
has a countable neighborhood basis $\{U_{-1},U_{-2},\ldots\}$
of the identity with the property that $U_n^3 \subset U_{n+1}$.)

This done, \Cref{lem:Rosendal213} 
produces a continuous left-invariant metric $d$ on $G$ for which
a set is $d$-bounded 
(i.e. has bounded orbits under the action of $G$ on itself by isometries of $d$) 
if and only if it appears in some $U_n$.
Since the $U_n$ are derived from the $V_n$ by symmetrizing,
passing to a subsequence, and then augmenting in the negative direction,
each $d$-bounded set will be contained in some $V_n$.
Therefore, since $A$ is coarsely bounded,
we see that it satisfies the desired property.
\end{proof}

Suppose that $G$ acts continuously by isometries on a metric space $X$.
The space $X$ is \emph{geodesic} if between any two points of $X$
there exists
a rectifiable curve from one to the other whose length
is equal to the distance between them---such a curve is a \emph{geodesic.} $G$ is \emph{locally bounded} (some authors have ``locally CB'')
if it has a coarsely bounded identity neighborhood
and \emph{boundedly generated} (some authors have ``CB generated'')
if it is generated by a coarsely bounded set. Let us remark that \emph{a priori} bounded generation appears to be
somewhat orthogonal to local boundedness.
This is not so for Polish groups, as we now show.

\begin{lem}\label{boundedlygeneratedimplieslocallybounded}
  Suppose that $G$ is a boundedly generated Polish group.
  Then $G$ is locally bounded.
\end{lem}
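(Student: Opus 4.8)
The plan is to exploit the Baire category theorem, which is available because a Polish group is in particular completely metrizable and hence a Baire space. Let $A$ be a coarsely bounded generating set for $G$. After replacing $A$ with $A \cup A^{-1} \cup \{1\}$ I may assume $A$ is symmetric and contains the identity; this does not disturb coarse boundedness, since a finite union of coarsely bounded sets is coarsely bounded and $A^{-1}$ gives the same basepoint distances as $A$ under any isometric action (as $d(x_0, a^{-1}.x_0) = d(a.x_0, x_0) = d(x_0, a.x_0)$). With $A$ symmetric and containing $1$, the powers $A^n$ form an increasing sequence of subsets whose union is all of $G$.

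Next I would record two stability properties of coarse boundedness that follow directly from the definition. First, a product $BC$ of coarsely bounded sets is coarsely bounded: if $G$ acts continuously by isometries on $(X,d)$ with basepoint $x_0$, then $d(x_0, bc.x_0) \le d(x_0, b.x_0) + d(x_0, c.x_0)$ using that each $b$ acts by an isometry, so the $BC$-orbit is bounded whenever the $B$- and $C$-orbits are. Inductively, every $A^n$ is coarsely bounded. Second, the closure of a coarsely bounded set is coarsely bounded: if $g \in \overline{B}$, choose $b_i \to g$; continuity of the action gives $b_i.x_0 \to g.x_0$, and continuity of $d$ then bounds $d(x_0, g.x_0)$ by the supremum of $d$ over the $B$-orbit. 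Hence each $\overline{A^n}$ is coarsely bounded as well.

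The crux is now a Baire category argument. Since $G = \bigcup_{n\ge 1} A^n = \bigcup_{n\ge 1}\overline{A^n}$ is a countable union of closed sets and $G$ is a Baire space, some $\overline{A^N}$ has nonempty interior; let $W$ be a nonempty open subset of $\overline{A^N}$ and fix $w \in W$. Then $w^{-1}W$ is an open neighborhood of the identity. Because $A$ is symmetric, $\overline{A^N}$ is invariant under inversion, so $w^{-1} \in \overline{A^N}$; and since multiplication is continuous, $\overline{A^N}\cdot\overline{A^N} \subseteq \overline{A^{2N}}$. Therefore
\[
  w^{-1}W \subseteq \overline{A^N}\cdot\overline{A^N} \subseteq \overline{A^{2N}},
\]
which is coarsely bounded by the two steps above. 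As any subset of a coarsely bounded set is coarsely bounded, $w^{-1}W$ is a coarsely bounded open identity neighborhood, so $G$ is locally bounded.

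I expect the main obstacle to be organizing the translation step cleanly: one must use symmetry of $A$ to place $w^{-1}$ back inside $\overline{A^N}$ and continuity of multiplication to keep the product inside $\overline{A^{2N}}$, so that the resulting open set genuinely sits inside a coarsely bounded set. The Baire category input itself is the conceptual heart—it is exactly where Polishness (completeness) is used—but it becomes routine once the covering $G=\bigcup_n \overline{A^n}$ is in place.
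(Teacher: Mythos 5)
Your proof is correct, and it diverges from the paper's in the decisive step. Both arguments share the same skeleton: symmetrize the coarsely bounded generating set, write $G=\bigcup_n \overline{A^n}$, and invoke the Baire category theorem. But where the paper concludes via Pettis' lemma (applied to a non-meagre set with the Baire property, yielding that $(S^k)^{-1}S^k$ is an identity neighborhood), you sidestep Pettis entirely: since your sets $\overline{A^N}$ are \emph{closed}, non-meagreness immediately gives nonempty interior, and translating an interior point $w$ to the identity puts the open set $w^{-1}W$ inside $\overline{A^N}\cdot\overline{A^N}\subseteq\overline{A^{2N}}$, which you have shown is coarsely bounded. You also prove the needed stability facts (products and closures of coarsely bounded sets are coarsely bounded) directly from the definition via the triangle inequality and continuity, whereas the paper cites Rosendal's criterion for the product step. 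Your route is more elementary and self-contained, and it is perfectly adequate here because only \emph{some} coarsely bounded open identity neighborhood is required, so passing to closures costs nothing. What the Pettis-based route buys is robustness for sets one cannot close up: later in the paper (the proof that word metrics for analytic coarsely bounded generating sets are quasi-isometric to compatible metrics), one needs $S^{2k}$ itself---not its closure---to contain an identity neighborhood, since the argument tracks word length with respect to $S$; there Pettis' lemma applied to the analytic set $S^k$ is essential, and your interior-translation trick would not substitute.
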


First, some useful terminology.
A subset $S$ of a space $X$ is \emph{nowhere dense}
if its closure has empty interior.
A countable union of nowhere dense sets is said to be \emph{meagre,}
while a set containing a countable intersection of open and dense sets
is said to be \emph{residual.}
A quick set-theoretical argument shows that the residual sets are exactly
the \emph{comeagre sets,}
i.e.\ those whose complement is meagre.
A set is \emph{non-meagre} if it is not meagre.
The \emph{Baire category theorem} says that in a Polish space,
residual sets are dense.

Observe that if $N$ is nowhere dense, then $X - \bar{N}$ is open and dense.
If some nonempty open set $U$ of a Polish space $X$
(or just a space satisfying the Baire category theorem)
were meagre,
say equal to $\bigcup_{n\in\mathbb{N}} N_n$ where each $N_n$
is nowhere dense,
we would have that the closed set $\overline{\bigcap_{n\in\mathbb{N}}(X - N_n)}$
is equal to the complement of $U$, in contradiction to the fact
that it is residual and hence dense.
Therefore in Baire spaces,
open sets are non-meagre.
It is also clear that a non-meagre closed set must have nonempty interior.

A subset $A \subset X$ has the \emph{Baire property} if there exists
an open set $U \subset X$
such that the symmetric difference $A \triangle U$ is meagre.
Equivalently we may write $A$ as the symmetric difference
$A = U \triangle M$
where $U$ is open and $M$ is meagre.
If $X$ is Polish, we say that $A \subset X$ is \emph{analytic}
if it is the continuous image of some Polish space.
Borel sets are analytic,
and analytic subsets have the Baire property.

\begin{lem}[Pettis' Lemma, Theorem 1 of~\cite{Pettis}]
  Suppose that $A$ is a non-meagre analytic subset
  of a Polish topological group $G$.
  Then $A^{-1}A$ is an identity neighborhood.
\end{lem}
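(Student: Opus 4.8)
The plan is to exploit the Baire property of $A$ to locate a nonempty open set in which $A$ is comeagre, and then to overlap that set with a small right-translate of itself, using a Baire category argument to force $A$ and a translate $Ag$ to meet there. The payoff of a nonempty intersection $A \cap Ag$ is exactly an expression $g = a^{-1}x$ with $a, x \in A$, i.e.\ a witness that $g \in A^{-1}A$.

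First I would invoke the fact (recalled just above) that an analytic set has the Baire property, and write $A = V \triangle M$ with $V$ open and $M$ meagre. Since $A$ is non-meagre, $V$ cannot be empty, for otherwise $A = M$ would be meagre. Moreover $V \setminus A \subseteq A \triangle V = M$, so $V \setminus A$ is meagre; in other words $A$ is comeagre in the nonempty open set $V$. Right translation by any fixed $g$ is a homeomorphism and so preserves meagreness, whence $Vg \setminus Ag = (V \setminus A)g$ is meagre, i.e.\ $Ag$ is comeagre in $Vg$.

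Next I would arrange the overlap using continuity of the group operation. Fixing a point $v_0 \in V$, the map $g \mapsto v_0 g^{-1}$ is continuous and sends the identity to $v_0 \in V$, so there is an identity neighborhood $W$ with $v_0 g^{-1} \in V$ for every $g \in W$. For such $g$ the point $v_0$ lies in both $V$ and $Vg$, so $O := V \cap Vg$ is a nonempty open set. On $O$ both $A$ and $Ag$ are comeagre, since $O \setminus A \subseteq V \setminus A$ and $O \setminus Ag \subseteq Vg \setminus Ag$ are meagre; hence $O \setminus (A \cap Ag)$ is meagre. Because $O$ is open and therefore non-meagre in the Polish (hence Baire) group $G$, the set $A \cap Ag \cap O$ must be nonempty. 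Choosing $x = ag \in A \cap Ag$ with $a \in A$ gives $g = a^{-1}x \in A^{-1}A$, so $W \subseteq A^{-1}A$ and $A^{-1}A$ is an identity neighborhood.

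The two honest ingredients are both Baire category facts: passing from ``non-meagre with the Baire property'' to ``comeagre in some nonempty open set,'' and concluding that two comeagre subsets of a common nonempty open set cannot be disjoint. Neither is hard given the tools already assembled, so I do not expect a serious obstacle; the one point demanding care is simply to choose $W$ so that $V$ and its translate $Vg$ genuinely overlap, which is where continuity of multiplication does the essential work and which makes the comeagreness of $A$ and of $Ag$ collide on the common open set $O$.
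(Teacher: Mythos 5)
Your proof is correct and follows essentially the same route as the paper's: both decompose $A = V \triangle M$ via the Baire property, use continuity of the group operations to produce an identity neighborhood of elements $g$ for which $A$ and a translate of $A$ are comeagre in a common nonempty open set, and invoke Baire category to force the intersection to be nonempty, yielding $g \in A^{-1}A$. The only difference is bookkeeping: the paper runs the category step by intersecting the residual set $E_h = (G - M) \cap [(G-M)h^{-1}]$ with the open set $gUh^{-1}$, whereas you phrase it as two comeagre subsets of $O = V \cap Vg$ being unable to be disjoint.
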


\begin{proof}
  Since $A$ has the property of Baire,
  we may write it as $W \triangle M$
  for some open set $W$ and some meagre set $M$.
  By continuity of the map $(g,h) \mapsto gh^{-1}$,
  every open neighborhood $V$ of the identity contains an open neighborhood
  $U$ such that $UU^{-1} \subset V$.
  Since arbitrary open sets are left-translates of open identity neighborhoods,
  we see that there exists $g \in G$ and an open identity neighborhood $U$
  such that $gUU^{-1} \subset W$.
  We claim that $U \subset A^{-1}A$.
  Indeed, take $h \in U$.
  Observe that $E_h = (G - M) \cap [(G - M)h^{-1}]$
  is residual, being the intersection of two comeagre sets,
  so has nontrivial intersection with the open set $gUh^{-1}$.
  Take $x$ in this intersection.
  We claim that $x$ and $xh$ are in $A$,
  whence $h \in A^{-1}$.
  Indeed, by the definition of $E_h$,
  both are in the complement of $M$,
  while by the containment $gUh^{-1} \subset gUU^{-1}$,
  we have that they are contained in $W$.
\end{proof}

\begin{proof}[Proof of \Cref{boundedlygeneratedimplieslocallybounded}]
  Let $S$ be a coarsely bounded set generating $G$;
  we may assume that it is symmetric without loss.
  To say that $S$ generates is equivalent to the observation that the union
  of the sequence $S, S^2, \ldots$ is all of $G$,
  hence the same is true of the sequence $\overline{S}, \overline{S^2}, \ldots$.
  Observe that each $S^k$ is coarsely bounded by Rosendal's criterion.
  By the definition of coarse boundedness, so are their closures,
  which have the property of Baire, being Borel.
  Since $G$ is Polish it is non-meagre,
  so some $\overline{S^k}$ must be non-meagre.
  Then $S^{2k} = {(S^k)}^{-1}S^k$ is a coarsely bounded identity neighborhood
  by Pettis's lemma.
\end{proof}

We come now to the Milnor--Schwarz Lemma.
To state it, we need a couple definitions.
Recall that in the setting of a topological group $G$ acting (continuously) on a topological space $X$,
the action is \emph{proper} if the map $G\times X \to X\times X$
given by $(g,x) \mapsto (g.x, x)$ is a proper map;
that is, preimages of compact sets are compact.
When $X$ is locally compact and Hausdorff,
this is equivalent to the condition that for each compact set $K \subset X$, the set
\[
  \{ g \in G : g.K \cap K \ne \varnothing \}
\]
is compact in $G$.
When $G$ is discrete,
the compact sets are finite
and this latter condition is ``proper discontinuity'' of the action.
When $X$ is a (proper, i.e.~closed metric balls are compact) metric space,
we may instead consider closed metric balls $B$ instead of compact sets and obtain the notion
of a \emph{metrically proper} action.
Without the assumption that $X$ is \emph{proper,} these conditions are really different.
If instead of compact, we require that the sets
\[
  \{ g \in G : g.B \cap B \ne \varnothing \}
\]
are coarsely bounded,
we have the notion of a \emph{coarsely metrically proper} group action. A group action is \emph{cocompact} (respectively \emph{cobounded})
if there is a compact (respectively bounded) set whose translates cover the space.

\begin{prop}[Milnor--Schwarz Lemma]\label{milnorschwarz}
  Suppose that the Polish group $G$ acts continuously,
  coarsely (metrically) properly
  and coboundedly by isometries
  on a geodesic metric space $(X,d)$.
  Then $G$ is boundedly generated,
  and although the word metric with respect to a symmetric, analytic,
  coarsely bounded generating set
  need not be compatible with the topology on $G$
  (in fact, it willl not be if $G$ is not discrete),
  any such metric is quasi-isometric---via any orbit map---to $(X,d)$.
\end{prop}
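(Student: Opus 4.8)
The plan is to adapt the classical \v{S}varc--Milnor argument, replacing ``finite/compact'' by ``coarsely bounded'' throughout. Fix a basepoint $x_0 \in X$. By coboundedness there is a bounded set whose $G$-translates cover $X$, and any such set lies in some closed ball $\overline{B(x_0,R)}$, so the translates of $\overline{B(x_0,R)}$ already cover $X$. I would then propose the candidate generating set
\[
  S = \{g \in G : d(x_0, g.x_0) \le 3R\},
\]
and first verify its three required features. It is symmetric because the action is isometric, so $d(x_0, g\inv.x_0) = d(g.x_0, x_0)$; it is closed---hence Borel, hence analytic---being the preimage of $[0,3R]$ under the continuous orbit map $g \mapsto g.x_0$; and it is coarsely bounded, since for the closed ball $B = \overline{B(x_0, 3R/2)}$ one has $g.B \cap B \ne \varnothing$ exactly when $d(x_0,g.x_0)\le 3R$, so $S = \{g : g.B\cap B \ne\varnothing\}$ is coarsely bounded directly from coarse metric properness.

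Next I would show $S$ generates and simultaneously bound word length by distance. Given $g\in G$, set $L = d(x_0, g.x_0)$, choose a geodesic from $x_0$ to $g.x_0$, and subdivide it into $k = \lceil L/R\rceil$ subarcs of length at most $R$, giving points $x_0 = p_0, \dots, p_k = g.x_0$ with $d(p_i,p_{i+1}) \le R$. Since the translates of $\overline{B(x_0,R)}$ cover $X$, I may pick $g_i$ with $d(p_i, g_i.x_0) \le R$, taking $g_0 = \id$ and $g_k = g$. The triangle inequality then gives $d(x_0, g_i\inv g_{i+1}.x_0) \le 3R$, i.e. $g_i\inv g_{i+1} \in S$, and telescoping $g = (g_0\inv g_1)\cdots(g_{k-1}\inv g_k)$ writes $g$ as a product of $k$ elements of $S$. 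Hence $S$ generates---so $G$ is boundedly generated---and $|g|_S \le L/R + 1$. For the opposite comparison, coarse boundedness together with continuity of the action makes $S.x_0$ bounded (concretely $d(x_0,s.x_0)\le 3R$), so $d(x_0, g.x_0) \le 3R\,|g|_S$ for every $g$. These two inequalities, together with the $R$-density of $G.x_0$ furnished by coboundedness, say precisely that the orbit map is a quasi-isometry from $(G, d_S)$ to $(X,d)$.

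It remains to replace $S$ by an \emph{arbitrary} symmetric, analytic, coarsely bounded generating set $T$, and this is where the genuine difficulty lies: the word metric $d_T$ need not be compatible with the topology, and the left action of $G$ on $(G,d_T)$ need not be continuous, so one cannot simply feed $d_T$ into the definition of coarse boundedness. I would instead establish the two containments $T \subset S^{C}$ and $S \subset T^{C'}$, which together give that $d_S$ and $d_T$ are bi-Lipschitz comparable and hence transport the quasi-isometry above to $d_T$. The containment $T \subset S^{C}$ is the easy direction: continuity and coarse boundedness bound $d(x_0, t.x_0)$ uniformly over $t\in T$, and the lower bound $|g|_S \le d(x_0,g.x_0)/R + 1$ from the previous paragraph converts this into a uniform bound on $|t|_S$. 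The containment $S \subset T^{C'}$ is the crux. Here I would note that the analytic sets $T^{a}$ exhaust $G$, so by the Baire category theorem one of them is non-meagre; Pettis' Lemma then makes $T^{2a} = (T^{a})\inv T^{a}$ an identity neighborhood (using symmetry of $T$). Feeding this neighborhood into Rosendal's criterion (\Cref{lem:RosendalsCriterion}) applied to the coarsely bounded set $S$ yields $S \subset (FU)^N$ with $F$ finite and $U \subset T^{2a}$; since $T$ generates, $F$ lies in some $T^{b}$, and therefore $S \subset T^{(2a+b)N}$.

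Finally, since $d(g.x_0, g.x_1) = d(x_0,x_1)$ is independent of $g$, any two orbit maps lie at uniformly bounded distance, so verifying the statement for one basepoint suffices for all. I expect the main obstacle to be exactly the passage between distinct coarsely bounded generating sets isolated above: it cannot be made by the naive ``act on the word metric'' argument, and instead requires the Baire--Pettis--Rosendal input to control $T$-word length on the coarsely bounded set $S$. Everything else is a faithful transcription of the classical proof with ``coarsely bounded'' in place of ``finite.''
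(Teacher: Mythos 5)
Your proposal is correct, and its skeleton---a coarsely bounded ``ball'' generating set, geodesic subdivision to bound word length by displacement, and a Baire--Pettis--Rosendal argument to handle an arbitrary symmetric, analytic, coarsely bounded generating set $T$---matches the paper's proof. The genuine difference is in how that last step is organized. The paper routes it through \Cref{wordmetricsQItoactualmetrics} (Rosendal's Lemma 2.51): the word metric of $T$ is quasi-isometric to a \emph{compatible} left-invariant metric $\partial$, and then coarse boundedness of the set of elements not moving a ball off itself bounds their $\partial$-diameter, hence their $T$-word length. You never construct a compatible metric at all: you prove the two containments $T \subset S^{C}$ and $S \subset T^{C'}$ directly, obtaining bi-Lipschitz equivalence of the two word metrics, where the hard containment $S \subset T^{C'}$ is precisely the Baire-category-plus-Pettis-plus-Rosendal's-criterion argument that forms the first half of the paper's proof of \Cref{wordmetricsQItoactualmetrics}. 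So your proof is self-contained modulo Pettis' lemma and \Cref{lem:RosendalsCriterion}, and entirely avoids the Birkhoff--Kakutani-style construction of \Cref{lem:Rosendal213}; what the paper's detour buys is the compatible-metric statement itself, which is of independent use (it shows the quasi-isometry type is realized by a metric generating the topology on $G$). The remaining differences are cosmetic: your generating set $\{g : d(x_0,g.x_0)\le 3R\}$ is closed, hence analytic, where the paper's $\{g : g.B\cap B \ne \varnothing\}$ is open, and you subdivide the geodesic rather than extracting a finite subcover of its image. One small point to tighten: if $1 \notin T$ the powers $T^{a}$ need not be nested (only $T^{a}\subset T^{a+2}$), so the conclusion of your crux step should be phrased as a uniform bound on $T$-word length over $S$ rather than literally $S \subset T^{(2a+b)N}$---which is all you use anyway.
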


Let us remark that in the proof we will need to use a metric
which is not compatible with the topology on $G$.
We will show that it is quasi-isometric to a metric
that \emph{is} compatible in \Cref{wordmetricsQItoactualmetrics}
directly following the proof.

\begin{proof}
  The proof is essentially the classical geometric group theory proof.
  Afficionados of the proof will recognize that we do not need the full strength of ``geodesic.''
  We also do not need that $G$ is Polish except in proving \Cref{wordmetricsQItoactualmetrics}.

  Let $B$ be an open bounded set whose $G$-translates cover $X$,
  (say a sufficiently large metric ball)
  and consider the set $S \subset G$
  of elements $g \in G$ such that $g.B \cap B \ne \varnothing$.
  Let us remark that by continuity this set is open:
  the sets $\{ (g,x) : g.x \in B \}$ and $\{ (g, x) : x \in B\}$ are open,
  and $S$ is the image of their intersection under the (open)
  projection map from $G \times X$ to $G$.
  By coarse properness of the action, the set $S$ is coarsely bounded.

  Consider $g \in G$ and $x_0 \in B$.
  Let $\gamma$ be a geodesic from $x_0$ to $g.x_0$.
  Since translates of $B$ cover $X$,
  they certainly cover the image of $\gamma$.
  By compactness of this image, finitely many suffice,
  say $B,g_1.B\ldots,g_k.B, g.B$,
  where we have ordered the $g_i$ so that $g_i.B \cap g_{i+1}.B \ne \varnothing$,
  and we write $1 = g_0$ and $g = g_{k+1}$.
  By writing $s_i = g_i^{-1}g_{i+1}$,
  we see that each $s_i$ for $0 \le i \le k$ belongs to $S$
  and that $g = s_0\cdots s_k$.
  Therefore the set $S$ is an open coarsely bounded generating set for $G$.

  Next suppose that $S$ is a symmetric, \emph{analytic}
  coarsely bounded generating set for $G$,
  and consider the word metric on $G$ with respect to $S$.
  Analyticity is necessary to prove in \Cref{wordmetricsQItoactualmetrics}
  below that this word metric is quasi-isometric to one with respect to an open,
  coarsely bounded generating set.
  Since $S$ is coarsely bounded,
  we have that for any $x_0 \in X$,
  there exists $M$ large such that
  for each $g \in S$, we have that $d(x_0, g.x_0) \le M$.
  By the triangle inequality, if $g \in G$ is arbitrary,
  we have that $d(x_0, g.x_0)$ is bounded above by $M$
  times the word length of $g$ with respect to $S$.
  Conversely, suppose that $B$ is an open ball centered at $x_0$
  of radius $2C$,
  where translates of the ball of radius $C$ cover $X$.
  The collection $S'$ of elements of $G$ which fail to move $B$
  off of itself
  is coarsely bounded in $G$,
  hence by \Cref{wordmetricsQItoactualmetrics} to follow,
  we see that word lengths of elements in $S'$ with respect to $S$
  are bounded by some constant $M'$.
  Write
  \[
    k = \left\lfloor \frac{d(x_0,g.x_0)}{C} \right\rfloor.
  \]
  We can find a sequence of $k+1$ points $x_0,x_1,\ldots,x_k = g.x_0$
  on any geodesic from $x_0$ to $g.x_0$ such that the distance between
  successive points is at most $C$.
  Associated to these points we can find $g_i \in G$
  such that $x_i \in g_i.B$,
  from which we conclude that the word length of $g$
  with respect to $S$ is at most
  \[
    Mk + M \le M \frac{d(x_0, g.x_0)}{C} + M,
  \]
  showing that the word metric on $G$ with respect to $S$
  is quasi-isometric to the metric
  on the orbit of $x_0$,
  or equivalently by coboundedness, the metric on $X$.
\end{proof}

To complete the proof, we need the following lemma.
Analyticity is used in the proof in order to appeal to Pettis's lemma.

\begin{lem}[Lemma 2.51 of~\cite{Rosendal}]\label{wordmetricsQItoactualmetrics}
  Suppose that $d$ is a word metric on a Polish group $G$
  with respect to some symmetric, analytic coarsely bounded generating set $S$.
  There exists a left-invariant compatible metric $\partial$ on $G$
  such that $(G,\partial)$ is quasi-isometric to $(G,d)$.
\end{lem}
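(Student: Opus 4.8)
The plan is to build $\partial$ as the left-invariant metric attached to a \emph{length function} $\ell$ on $G$ that charges the word metric $d$ at large scale and a compatible metric at small scale; the quasi-isometry to $(G,d)$ will then be realized by the identity map on $G$.

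First I would reduce to the case that $S$ contains an open identity neighbourhood. Since $S$ generates $G$, the group is boundedly generated, hence locally bounded by \Cref{boundedlygeneratedimplieslocallybounded}; let $V$ be an open, symmetric, coarsely bounded identity neighbourhood. Replacing $S$ by $S\cup V$ keeps it symmetric, analytic, coarsely bounded, and generating, and I would check that the two resulting word metrics are bi-Lipschitz: one direction is immediate, and the other follows from \Cref{lem:RosendalsCriterion} applied to the coarsely bounded set $S$ together with the open set $V$, after expressing the resulting finite set in terms of $S$. So it suffices to prove the lemma under the standing assumption $V\subseteq S$. Two finite quantities are then available. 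Applying \Cref{lem:RosendalsCriterion} to the coarsely bounded set $V$ and the open neighbourhood $V\subseteq S$ gives $V\subseteq (FV)^N\subseteq S^{(M+1)N}$ once each element of the finite set $F$ is written in $S$, so $d(1,\cdot)$ is bounded by some $R$ on $V$. (Analyticity enters precisely here, through the appeal to Pettis's lemma inside \Cref{boundedlygeneratedimplieslocallybounded}.) Moreover, after rescaling a Birkhoff--Kakutani compatible left-invariant metric $\partial_0$ by a large constant, I may assume the closed ball $\{g:\partial_0(1,g)\le 2\}$ is contained in $V$.

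Next I would set $\rho(g)=\partial_0(1,g)$ when $\partial_0(1,g)\le 1$, $\rho(g)=1$ when $g\in S$ with $\partial_0(1,g)>1$, and $\rho(g)=+\infty$ otherwise, and then define $\ell(g)=\inf\{\sum_i\rho(t_i):g=t_1\cdots t_n\}$ and $\partial(f,g)=\ell(f^{-1}g)$. Symmetry, the triangle inequality (concatenation of factorizations), and finiteness (since $S$ generates) are routine. For positivity and compatibility I would note that any factorization of total cost below $1$ uses only factors in the ball $B=\{\partial_0\le 1\}$, so $\partial_0(1,g)\le\ell(g)$ whenever $\ell(g)<1$; combined with the single-factor estimate $\ell(g)\le\partial_0(1,g)$ for $g\in B$, this shows $\partial=\partial_0$ on a neighbourhood of the identity, whence $\partial$ is a genuine metric inducing the given topology. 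The easy half of the quasi-isometry is also immediate: a word $g=s_1\cdots s_m$ in $S$ has $\ell(g)\le m$, so $\partial(1,g)\le d(1,g)$.

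The main obstacle is the reverse inequality $d(1,g)\le K\,\partial(1,g)+K$, that is, ruling out that a product of many small steps yields large word length at small $\partial$-cost. I would take a near-optimal factorization of $g$ and group its factors greedily into consecutive blocks of total cost in $[1,2)$; since each factor costs at most $1$, the number of blocks is at most $\ell(g)+2$. Each block contains at most one factor of cost $1$ and otherwise products of $B$-factors of total $\partial_0$-cost below $2$, which therefore lie in $\{\partial_0\le 2\}\subseteq V$; by the uniform bound $R$ such a product has word length at most $R$, so each block has word length at most $2R+1$. Summing over blocks gives $d(1,g)\le(2R+1)(\ell(g)+2)$, which together with $\partial(1,g)\le d(1,g)$ yields the quasi-isometry after extending by left-invariance. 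This last estimate is the crux: it is exactly where coarse boundedness of $V$ (through the uniform word-length bound $R$) and the choice of $\partial_0$ forcing cost-$<2$ blocks to stay inside $V$ are both essential, since without the former the small-scale steps could accumulate word length uncontrollably.
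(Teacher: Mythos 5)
There is a genuine gap, and it sits exactly where the hypothesis of analyticity has to do real work: in your reduction to the standing assumption $V\subseteq S$. To know that the word metrics for $S$ and for $S\cup V$ are quasi-isometric via the identity, you need a uniform bound on the $S$-word length of elements of $V$, i.e.\ a containment $V\subseteq S^{R}$ for some finite $R$. Your appeal to \Cref{lem:RosendalsCriterion} cannot produce this. Applied to the coarsely bounded set $S$ (or to $V$) with the open set $V$, the criterion yields $S\subseteq (FV)^{N}$ (respectively $V\subseteq (FV)^{N}$), and after writing $F\subseteq S^{m}$ you are still left with occurrences of $V$ on the right-hand side---which is precisely the set you are trying to express in terms of $S$. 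Rosendal's criterion always bounds a coarsely bounded set by words in a finite set together with an \emph{open} set; converting such words into $S$-words requires knowing beforehand that some open identity neighbourhood lies inside a power of $S$, and no choice of inputs to the criterion escapes this circularity. Your parenthetical locating the use of analyticity ``inside \Cref{boundedlygeneratedimplieslocallybounded}'' confirms the confusion: that lemma does not assume the generating set is analytic (its proof applies Pettis's lemma to the closures $\overline{S^{k}}$, which are Borel), and its conclusion---the mere existence of an open coarsely bounded neighbourhood $V$---carries no relation between $V$ and powers of $S$. As written, your proof never actually uses analyticity of $S$, which the paper explicitly flags as indispensable.

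The missing step is the one the paper performs. Since $S$ is analytic, each power $S^{k}$ is analytic and hence has the Baire property; since $G=\bigcup_{k}S^{k}$ is non-meagre, some $S^{k}$ is non-meagre, and Pettis's lemma applied to this symmetric analytic set shows that $S^{2k}$ is an identity neighbourhood. Choosing an open identity neighbourhood $W\subseteq S^{2k}$ and only then running Rosendal's criterion for the coarsely bounded set $V$ with the open set $W$ gives $V\subseteq (FW)^{N}\subseteq (S^{m}S^{2k})^{N}=S^{(m+2k)N}$, which is the bound $R$ you need. With this repair your reduction closes, and the rest of your argument is sound: the truncated length function $\ell$, the verification that $\partial$ agrees with $\partial_{0}$ near the identity, and the greedy block decomposition giving $d(1,g)\le (2R+1)(\ell(g)+2)$ all work, and constitute a mild variant of the paper's own construction (the paper instead assigns each $s\in S$ the cost $d'(1,s)$, bounded by $M$ on $S$, and obtains the lower bound from a minimal-length near-optimal factorization in which consecutive pairs $s_{i}s_{i+1}$ must leave $S$). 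So the gap is localized, but it is the essential point of the lemma rather than a technicality.
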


\begin{proof}
  First, observe that we may suppose that $S$ is \emph{open}.
  Indeed, if it is not already,
  observe that because $G$ is Polish and boundedly generated,
  it is locally bounded by \Cref{boundedlygeneratedimplieslocallybounded},
  so we may take some coarsely bounded, symmetric open identity neighborhood $U$.
  By Rosendal's criterion,
  there exists a finite (we may assume symmetric) set $F$
  such that $S \subset {(FU)}^k$.
  But this says precisely that $S$ has bounded word lengths with respect
  to the open generating set $FU$.
  Conversely, $FU$ is coarsely bounded,
  so if we can show that some $S^k$ contains an identity neighborhood,
  we can conclude that $FU$ has bounded word lengths with respect to $S$,
  and thus that the corresponding word lengths are bi-Lipschitz equivalent.

  This is where we need analyticity of $S$:
  the point is that we yet again have $G = \bigcup_{k \in \mathbb{N}} S^k$,
  so some $S^k$ is nonmeagre and has the property of Baire,
  so by Pettis's lemma, since $S^k$ is symmetric,
  $S^{2k}$ is an identity neighobrhood.
  Using an open identity neighborhood $V \subset S^{2k}$
  and Rosendal's criterion for coarse boundedness of $FU$,
  we see that $FU$ has bounded word lengths with respect to $S$,
  so their word metrics are bi-Lipschitz equivalent.

  Supposing that $S$ is an open identity neighborhood,
  which is a symmetric and coarsely bounded generating set,
  write $\|g\|$ for the word norm on $g$ with respect to $S$.
  Let $d'$ be \emph{any} compatible left-invariant metric on $G$
  with corresponding norm $|g| = d'(1,g)$.
  Because $S$ is coarsely bounded,
  we have some constant $M$ such that
  for each $h \in S$, we have $|h| \le M$.
  Define a metric $\partial$ on $G$ by the rule that
  \[
    \partial(g,h) = \inf \{ |s_1| + \cdots + |s_k| : h^{-1}g = s_1\ldots s_k, \ s_i \in S \}.
  \]
  First, it is not hard to argue that this really is a left-invariant metric
  on $G$, since $d'$ is one.
  Since $\partial$ agrees with $d'$ on the open set $S$
  by the triangle inequality
  and otherwise satisfies $\partial \ge d'$,
  we see that $\partial$ is compatible with the topology on $G$.
  Notice that we have $\partial(g,h) \le M\|h^{-1}g\|$ by construction.
  On the other hand, because $S$ is open, it contains the $d'$
  ball of some radius $2\epsilon > 0$ about the identity.
  Since $\partial(g,h)$ is defined as an infimum,
  we may take some actual sequence $s_1,\ldots,s_k$ from $S$
  such that $h^{-1}g = s_1\ldots s_k$
  and such that $|s_1| + \cdots + |s_k| \le \partial(g,h) + 1$.
  Indeed, among such sequences, choose $k$ to be the smallest possible.
  Now, notice that if $s_i s_{i+1}$ were in $S$,
  we could do better on choosing $k$,
  so in fact they do not belong to $S$.
  It follows that $|s_i s_{i+1}| \ge 2\epsilon$,
  whence by the triangle inequality
  at least one of $|s_i|$ or $|s_{i+1}|$ must be at least $\epsilon$.
  Now, we have
  $1 + \partial(g,h) \ge |s_1| + \cdots + |s_k| \ge \lfloor \frac{k}{2} \rfloor \cdot \epsilon$,
  and the word length $\|g^{-1}h\| \le k$ by assumption,
  so
  \[
    \frac{\epsilon\|g^{-1}h\|}{2} - 1 - \frac{\epsilon}{2} \le \partial(g,h),
  \]
  from which we conclude that the identity map from $(G,d)$ to $(G,\partial)$
  is a quasi-isometry.
\end{proof}

By the by,
there is also a ``finite generation'' condition underlying
the notion of bounded generation,
as we show in the next lemma.
Recall that for a countable discrete group $G$,
being finitely generated is equivalent to the condition
that whenever $G$ may be written as the union of a chain of subgroups
$G_1 \le G_2 \le \cdots$,
this chain terminates in $G$ after finitely many steps.

\begin{lem}[cf. Theorem 2.40 of~\cite{Rosendal}]\label{lem:countgen}
  Suppose that $G$ is locally bounded and Polish.
  Then $G$ is boundedly generated if and only if
  every ascending chain of open subgroups
  that exhausts the group
  terminates after finitely many steps.
\end{lem}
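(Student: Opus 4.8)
The plan is to prove both implications by leaning on the characterization of coarse boundedness established inside the proof of \Cref{lem:RosendalsCriterion}: a set $A \subseteq G$ is coarsely bounded if and only if for every nested open sequence $V_1 \subset V_2 \subset \cdots$ exhausting $G$ with $V_nV_n \subset V_{n+1}$, one has $A \subseteq V_n$ for some $n$. The key observation linking this to the present statement is that an ascending chain of open subgroups $G_1 \le G_2 \le \cdots$ is \emph{exactly} such a sequence, since each $G_n$ is open and $G_nG_n = G_n \subseteq G_{n+1}$.

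For the forward direction, suppose $G = \langle S \rangle$ with $S$ coarsely bounded, and let $G_1 \le G_2 \le \cdots$ be an ascending chain of open subgroups with $\bigcup_n G_n = G$. Viewing this chain as a nested exhausting sequence satisfying $G_nG_n \subseteq G_{n+1}$, the characterization above forces $S \subseteq G_n$ for some $n$. Since $G_n$ is a subgroup containing a generating set of $G$, we get $G_n = G$, so the chain terminates.

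For the reverse direction, I would argue as follows. Since $G$ is locally bounded, fix a coarsely bounded open identity neighborhood $U$. Because $G$ is Polish, the hypothesis of \Cref{lem:RosendalsCriterion} furnishes a countable set $C = \{g_1, g_2, \ldots\}$ with $G = \langle U \cup C \rangle$. Define $H_n = \langle U \cup \{g_1, \ldots, g_n\}\rangle$. Each $H_n$ contains the open identity neighborhood $U$ and is therefore an open subgroup; the chain $H_1 \le H_2 \le \cdots$ is ascending, and as an increasing union of subgroups it is again a subgroup, one which contains $U \cup C$ and hence all of $\langle U \cup C \rangle = G$, so the chain exhausts $G$. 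By hypothesis it terminates, giving $H_N = G$ for some $N$, that is, $G = \langle U \cup \{g_1, \ldots, g_N\}\rangle$. Finally, $U \cup \{g_1, \ldots, g_N\}$ is the union of a coarsely bounded set and a finite set, hence coarsely bounded, so $G$ is boundedly generated.

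Most of this is bookkeeping once the right chain is chosen, and the step to get right is the reverse direction's construction: one must check that the $H_n$ are genuinely open (using that $U$ is an open identity neighborhood, available by local boundedness) and that they exhaust $G$ (using separability of $G$, which produces the countable complement $C$). The only fact needing a moment's care is that a finite union of coarsely bounded sets is again coarsely bounded, which is immediate since a finite union of bounded orbits is bounded. I do not anticipate a genuine obstacle; the content is entirely in recognizing the chain of open subgroups as an instance of the exhausting sequences governing coarse boundedness.
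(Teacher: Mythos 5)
Your proof is correct and takes essentially the same approach as the paper: the reverse direction is identical (use local boundedness and separability to build the ascending chain of open subgroups $\langle U, g_1, \ldots, g_n\rangle$, then note a coarsely bounded set union a finite set is coarsely bounded). The only cosmetic difference is in the forward direction, where you invoke the nested-exhausting-sequence characterization established inside the proof of \Cref{lem:RosendalsCriterion} applied directly to the chain $G_1 \le G_2 \le \cdots$, while the paper applies Rosendal's criterion with $U = G_1$ and then absorbs the finite set $F$ into some $G_m$ --- the same mechanism, one step removed.
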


\begin{proof}
  If $G$ is boundedly generated, say by $S$,
  let $U = G_1$ be our first open subgroup.
  By Rosendal's criterion,
  there exists a finite set $F$ and $k$ such that $S \subset {(FU)}^k$.
  But $F$, and hence ${(FU)}^k$ and $G$,
  is therefore contained in some $G_n$,
  so the chain terminates in $G$ after finitely many steps.

  Conversely, since $G$ is locally bounded and Polish,
  there exists an open, coarsely bounded set $U$
  and a countable collection $\{x_1,x_2,\ldots\}$ of elements
  such that $G = \langle U, x_1,x_2,\ldots \rangle$.
  But then by considering the open subgroups
  $G_n = \langle U, x_1, \ldots, x_n \rangle$,
  we see that finitely many of the $x_i$ suffice.
  Since the set $U \cup \{x_1,\ldots,x_k\}$
  is clearly coarsely bounded for any $k$ by Rosendal's criterion,
  we see that $G$ is boundedly generated.
\end{proof}

\section{Cayley--Abels--Rosendal graphs}\label{sec:CARgraphs}

The results in this section are inspired by
\cite[Section 6.2]{Rosendal} and \cite[Section 2]{Lederle2022}.
There the authors work in the settings of non-Archimedean Polish groups 
(the closed subgroups of the symmetric group on a countable set) 
or totally disconnected, locally compact groups. 
In each case, the groups considered have a wealth of open subgroups, 
making them well-suited to the constructions we describe here. 
Our perspective is broadly very similar.
One difference is that since we work in greater generality,
our results show that their constructions are in some sense
the only ones available.

First, a definition.
A connected, countable simplicial graph $\Gamma$
is a \emph{Cayley--Abels--Rosendal graph} for a topological group $G$
if $G$ admits a continuous,
vertex-transitive action on $\Gamma$
with finitely many orbits of edges
and coarsely bounded (necessarily open) vertex stabilizers.

Note: Our definition follows more closely the definition of
a Cayley--Abels graph for a totally disconnected locally compact group,
rather than that of a Cayley graph.
While groups act vertex-transitively on their Cayley graphs,
as commonly defined, there is no assumption on the finiteness of the generating set,
nor continuity of the action.

Next we turn to some generalities on continuous actions of groups on graphs.
Suppose that a group $G$ acts on a graph $\Gamma$ continuously.
This action induces a representation from $G$ into the group of bijections of the vertex set $V\Gamma$.
This latter group is
a topological group with the \emph{permutation topology,}
where pointwise stabilizers of finite sets are basic open neighborhoods of the identity,
so this representation will be continuous if and only if the stabilizer of a vertex is open in $G$.
Such a permutation actually arises as a graph automorphism
just when it preserves adjacency.

When $\Gamma$ is simplicial, adjacency is a (symmetric) \emph{relation} on $V\Gamma$,
that is, a subset of $V\Gamma \times V\Gamma$,
so a bijection of $V\Gamma$ corresponds (uniquely) to a graph automorphism
just when it preserves this relation in the diagonal action on $V\Gamma \times V\Gamma$.
It is not hard to see that
the topology on $\aut(\Gamma)$ is precisely the subspace topology,
so if a continuous representation from a group $G$ into the group of bijections of $V\Gamma$
preserves the adjacency relation,
then $G$ acts continuously on $\Gamma$,
and we see that this happens if and only if vertex stabilizers are open in $G$.

Now when $\Gamma$ is a simplicial graph and $G$ acts continuously with one orbit of vertices,
write $V$ for the stabilizer of a fixed vertex $v$, and
notice that there exists an element $k \in G$ sending the oriented edge $e = (g.v, h.v)$
to the oriented edge $e' = (g'.v, h'.v)$ precisely when the pair $(kgV, khV)$
is equal to the pair $(g'V, h'V)$,
or put another way, when the \emph{double coset} $Vg^{-1}hV$
is equal to the double coset $Vg'^{-1}h'V$.
So when $\Gamma$ is simplicial,
every orbit of oriented edges corresponds (uniquely)
to a double coset of $V$ in $G$.

Let $U$ be the (open) set of elements $g$ in $G$
such that $g.v$ is either equal to $v$ or adjacent to it.
Observe that $VUV = U$:
if $g \in U$ and $h$, $k \in V$,
first we note that we have $hgk.v = hg.v$.
If $g.v = v$, then $hg.v = v$, so $hgk \in U$.
On the other hand, if $g.v = w$ which is adjacent to $v$,
then since $h.v = v$, the element $h$ must send $w$
to another vertex adjacent to $v$,
whence $hgk \in U$.

Thus if $g \in U$, then actually its whole coset $gV$ 
is contained in $U$.
Let $A$ be a set of representatives for the cosets in $G/V$
contained in $U$.
Since $U$ is symmetric by definition,
we may choose $A$ to be symmetric,
whence $U = VUV = AV = {(VA)}^{-1} = V^{-1}A^{-1} = VA$, since $V$ is a subgroup.
When $G$ is alternately Polish or $G$ is locally compact
and $U$ is compact,
the set $A$ is correspondingly countable or finite,
and is in one-to-one correspondence
with the set of vertices of $\Gamma$
equal to or adjacent to $v$.
Thus when $\Gamma$ is simplicial,
$A - \{1\}$ is in one-to-one correspondence
with the set of edges incident to $v$.

Let $F$ be a set of representatives for the double cosets
corresponding to oriented edge orbits in $\Gamma$.
We choose the notation $F$ to follow Rosendal~\cite{Rosendal}
and note that in the case that $\Gamma$ is a Cayley--Abels--Rosendal graph, $F$ will be a finite set.
We may choose $F$ 
such that $F \subset (A - \{1\})$
and such that $F$ is symmetric.

Conversely, given an open subgroup $V \le G$ and a set of double cosets in $V \backslash G / V$ represented by the set $F$ chosen to be symmetric and meet $V$ trivially,
we construct a simplicial graph $\Gamma = \Gamma(V,F)$.
Its vertex set is $G/V$ and two vertices (cosets) $gV$ and $hV$ are declared adjacent when the double coset $Vg^{-1}hV$ has a representative in $F$.
Put more plainly, we declare that the vertex (coset)
$gV$ is adjacent to $gafV$ for each $a \in V$ and $f \in F$.
The set of edges of the simplicial graph $\Gamma$ incident to the vertex $V$
is thus in bijection with the set of cosets $gV$
contained in the open set $VFV$ 
(notice that this set is likely much smaller than the set of elements $VF$).
The action of $G$ on $\Gamma(V,F)$
is continuous, vertex transitive,
and $F$ is in one-to-one correspondence
with the set of oriented edge orbits.

It is not hard to see that the graph $\Gamma$ will be connected precisely when $VFV \cup V$ (which is equal to $U = AV = VA$ in the discussion above) is an open generating set for $G$.
Notice that any continuous vertex-transitive action of a group $G$
on a connected graph $\Gamma$
becomes cobounded as soon as we associate to $\Gamma$
the geodesic path metric that assigns length $1$ to each edge.
If such an action is to be coarsely metrically proper,
it must be the case that the set
$VFV \cup V$ associated to $\Gamma$ as above
is coarsely bounded in $G$,
since it fails to move the closed $1$-neighborhood of $v$ off of itself.
But since $\Gamma$ is connected,
the coarsely bounded open set $VFV \cup V$ generates $G$.

\begin{prop}\label{prop:CARgraph}
    Suppose that $G$ admits a Cayley--Abels--Rosendal graph.
    Then $G$ is boundedly generated and by the Milnor--Schwarz Lemma is quasi-isometric to any Cayley--Abels--Rosendal graph for $G$.
    If $G$ is boundedly generated and satisfies the hypotheses of \Cref{lem:RosendalsCriterion}, then for any open, coarsely bounded subgroup $V$ of $G$ (supposing one exists),
    there exists a Cayley--Abels--Rosendal graph for $G$ with $V$ as the stabilizer of a vertex.
\end{prop}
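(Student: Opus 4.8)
The plan is to prove the two halves of \Cref{prop:CARgraph} separately, leveraging the machinery already assembled. For the first direction, suppose $G$ admits a Cayley--Abels--Rosendal graph $\Gamma$. By definition the action is continuous, vertex-transitive, with finitely many edge orbits and coarsely bounded (open) vertex stabilizers. Equipping $\Gamma$ with the path metric making each edge have length $1$, I would first observe that $\Gamma$ is a geodesic metric space and that vertex-transitivity makes the action cobounded (any single vertex orbit is $1$-dense after one does not even need coboundedness beyond noting a single orbit point witnesses it). The real content is verifying the action is \emph{coarsely metrically proper}: for a closed ball $B$ around a vertex $v$, the set $\{g : g.B \cap B \neq \varnothing\}$ should be coarsely bounded. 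Since balls of integer radius $R$ are finite unions of vertex-stabilizer translates---controlled by the finitely many edge orbits and the coarsely bounded stabilizer $V$---this set is built from finitely many $V$-double-coset pieces, hence coarsely bounded by Rosendal's criterion (\Cref{lem:RosendalsCriterion}). Once coarse metric properness is in hand, \Cref{milnorschwarz} applies directly to give bounded generation and the quasi-isometry with $\Gamma$.

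For the converse, I would invoke the explicit construction $\Gamma(V,F)$ developed in the paragraphs preceding the proposition. Given an open coarsely bounded subgroup $V$, the task is to exhibit a \emph{finite} symmetric set $F \subset V \backslash G / V$ (meeting $V$ trivially) such that $VFV \cup V$ is an open generating set for $G$; connectivity of $\Gamma(V,F)$ is then equivalent to this generation, as noted in the discussion. Coarse boundedness of $V$ is assumed, so the vertex stabilizers of $\Gamma(V,F)$ are automatically coarsely bounded and open, and continuity and vertex-transitivity come for free from the general construction. Thus the \textbf{sole obstacle} is producing a \emph{finite} $F$ yielding connectivity, which amounts to finding finitely many double cosets whose union with $V$ generates $G$.

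To produce such an $F$, I would argue as follows. Since $G$ is boundedly generated, let $S$ be a coarsely bounded symmetric generating set. As $V$ is an open identity neighborhood, Rosendal's criterion gives a finite set $F_0 \subset G$ and $N \in \mathbb{N}$ with $S \subset (F_0 V)^N$. Then $\langle V \cup F_0 \rangle$ contains $S$, hence equals $G$, so the open set $VF_0V \cup V$ generates $G$. Replacing $F_0$ by a finite symmetric set of double-coset representatives contained in the finitely many double cosets hit by $F_0 \cup F_0^{-1}$, and discarding any representative lying in $V$, yields a finite symmetric $F$ meeting $V$ trivially with $VFV \cup V$ still generating. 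By the connectivity criterion, $\Gamma(V,F)$ is connected; by construction it is countable (as $G/V$ is countable, $G$ being Polish and $V$ open), with $V$ the stabilizer of the base vertex, finitely many edge orbits indexed by $F$, and coarsely bounded open vertex stabilizers. This is precisely a Cayley--Abels--Rosendal graph.

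The main subtlety to handle carefully is the reduction from the finite \emph{set} $F_0 \subset G$ to a finite set of \emph{double-coset representatives}: one must check that passing from elements to the double cosets they determine does not destroy generation, which holds because $VF_0V$ is contained in the union of $V$-double cosets of elements of $F_0$, and symmetrizing respects the double-coset structure since $V = V^{-1}$. I expect this bookkeeping, rather than any deep idea, to be where care is required; the conceptual work is entirely carried by \Cref{lem:RosendalsCriterion}, \Cref{milnorschwarz}, and the graph construction already laid out before the proposition.
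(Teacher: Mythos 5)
Your proposal is correct and takes essentially the same route as the paper: the second half is the paper's argument almost verbatim (apply Rosendal's criterion to a coarsely bounded generating set $S$ with $V$ as the identity neighborhood to get a finite $F$ with $S \subset (FV)^N$, then massage $F$ into a symmetric set of double-coset representatives disjoint from $V$ so that $\Gamma(V,F)$ is connected), and the first half rests on the same identification of the graph with $\Gamma(V,F)$ plus the Milnor--Schwarz Lemma. The only difference is one of emphasis: you verify coarse metric properness explicitly (the set of elements not moving a radius-$R$ ball off itself lies in a finite union of products $Vf_1V\cdots f_kV$, hence is coarsely bounded --- your phrase ``balls are finite unions of vertex-stabilizer translates'' is loose, since the graph need not be locally finite, but the double-coset formulation you give right after is the correct one), whereas the paper gets bounded generation directly from the coarsely bounded generating set $V \cup F$ and leaves the properness check implicit in the discussion preceding the proposition.
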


\begin{proof}
    The foregoing argument shows that if $G$ has a Cayley--Abels--Rosendal graph $\Gamma$, then the graph $\Gamma$ is $G$-equivariantly isomorphic to $\Gamma(V,F)$
    for some open coarsely bounded subgroup $V$
    and finite set of double coset representatives $F$.
    The set $VFV$ generates $G$ since $\Gamma(V,F)$ is connected,
    as does $V \cup F$, which is coarsely bounded
    since $F$ is finite.

    Supposing that $G$ is boundedly generated and satisfies the hypotheses of \Cref{lem:RosendalsCriterion}
    and that $V$ is an open, coarsely bounded subgroup,
    we may apply Rosendal's Criterion to a coarsely bounded generating set $S$ of $G$ with $V$ as our open identity neighborhood.
    This provides a finite set $F$ and $N \in \mathbb{N}$
    such that $S \subset (FV)^N$.
    We may assume that $F$ is symmetric and is disjoint from $V$.
    Since $S \subset (FV)^N$,
    we have that $VFV$ generates $G$.
    Therefore, the graph $\Gamma = \Gamma(V,F)$
    is connected and admits a continuous, vertex-transitive action of $G$ with finitely many orbits of edges (the finite set $F$ is in one-to-one correspondence with the set of oriented edge orbits)
    and vertex stabilizers conjugate to the coarsely bounded subgroup $V$.
    The graph $\Gamma$ is thus a Cayley--Abels--Rosendal graph for $G$.
\end{proof}


Now, suppose that $G$ is locally bounded and Polish
but not boundedly generated.
Then it follows that if $V$ is a coarsely bounded, open subgroup of $G$,
the foregoing also proves the following dichotomy
for vertex-transitive actions of $G$ on graphs $\Gamma$
with $V$ as the stabilizer of a vertex:
either $\Gamma$ has infinitely many orbits of edges,
or $\Gamma$ is disconnected.

\section{Examples: Homeomorphism Groups of Countable Stone Spaces}\label{sec:HomeoCtbleStone}

In this final section, we apply the machinery described above to give a full classification of when homeomorphism groups of countable Stone spaces admit Cayley-Abels-Rosendal graphs. This will allow us to obtain a full classification of when these groups are (1) coarsely bounded, (2) locally bounded, and/or (3) boundedly generated.

\begin{DEF}
A \emph{Stone space} is a topological space that is compact, Hausdorff, and totally disconnected.
\end{DEF}

When $X$ is a second countable Stone space,
the group $\Homeo(X)$ equipped with the compact--open topology turns out to be non-Archimedean and Polish.
Two of the present authors~\cite{BranmanLyman} exhibit $\Homeo(X)$ as the automorphism group of a countable graph,
whose vertices are (certain) partitions of $X$ into two clopen (closed and open) sets $U$ and $V$.
The stabilizer of such a vertex is the subgroup of $\Homeo(X)$
either preserving $U$ and $V$ each setwise,
or exchanging them.
Since the intersection of clopen sets is clopen,
a basis for the compact--open topology on $\Homeo(X)$
then is given by $U_\mathcal{P}$,
where $\mathcal{P}$ is a finite partition
$\mathcal{P} = P_0 \sqcup P_1 \sqcup \cdots \sqcup P_n$
of $X$ into clopen sets $P_i$,
and $f \in \Homeo(X)$ belongs to $U_\mathcal{P}$
when it permutes the $P_i$.

\subsection{Countable Stone Spaces}\label{ssec:ctblestonespaces}

For the remainder of this paper we will be concerned with \emph{countable} Stone spaces. See \Cref{fig:stoneexamples} for some examples of countable Stone spaces. These form a particularly nice class of Stone spaces as they are exactly classified by a pair $(\alpha,n)$ where $\alpha$ is a countable ordinal and $n \in \N$. In fact, a consequence of this is that any countable Stone space is exactly homeomorphic to the countable ordinal $\omega^{\alpha}\cdot n +1$ equipped with the order topology. This was first proven by Mazurkiewicz and Sierpi\'{n}ski \cite{MS1920}, but for the sake of completeness we provide a proof here. 

First we need to introduce a type of ``derivative" map on topological spaces. We refer the reader to \cite[Section 6.C]{Kechris1995} and \cite{Milliet2011} for a more thorough treatment of what follows.

\begin{DEF}\label{def:derivedset}
    The \textbf{derived set} of a topological space $X$ is the set of all accumulation points of $X$. We denote the derived set of $X$ by $X'$. For an ordinal $\alpha$ we define the \textbf{$\alpha$-th Cantor-Bendixson derivative}, $X^{\alpha}$, of $X$ recursively as
    \begin{itemize}
        \item $X^{0} = X$, 
        \item $X^{\alpha+1} = (X^{\alpha})'$, and 
        \item $X^{\lambda} =\bigcap_{\alpha < \lambda} X^{\alpha}$ if $\lambda$ is a limit ordinal. 
    \end{itemize}
\end{DEF}

Next we check that for a second countable space, the Cantor-Bendixson derivatives eventually stabilize at some \emph{countable} ordinal. 

\begin{thm}\cite[Theorem 6.9]{Kechris1995}\label{thm:ctbleCanBenrank}
    If a topological space $X$ is second countable, then there exists some countable ordinal $\rho$ such that $X^{\rho} = X^{\rho+1}$. 
\end{thm}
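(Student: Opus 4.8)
The plan is to exploit the fact that the Cantor--Bendixson derivatives form a non-increasing, nested sequence of closed subsets $X = X^0 \supseteq X^1 \supseteq \cdots \supseteq X^\alpha \supseteq \cdots$, and to use second countability to bound the number of times this sequence can \emph{strictly} decrease. First I would record two structural facts by transfinite induction: that $X^{\alpha+1} = (X^\alpha)' \subseteq X^\alpha$ always holds, and that once the sequence is stationary at a successor step---i.e. $X^\rho = X^{\rho+1}$ for some $\rho$---it remains constant for all larger ordinals (at successor steps this is immediate, and at a limit $\lambda$ the intersection $\bigcap_{\alpha<\lambda}X^\alpha$ of an eventually constant sequence is that constant value). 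Consequently it suffices to produce a \emph{countable} ordinal $\rho$ at which $X^\rho = X^{\rho+1}$.

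Next I would fix a countable basis $\{U_n\}_{n \in \N}$ for $X$ and consider the set $S = \{\alpha : X^{\alpha+1} \subsetneq X^\alpha\}$ of ordinals at which the sequence strictly decreases. For each $\alpha \in S$ there is a point $x_\alpha \in X^\alpha \setminus X^{\alpha+1}$, and by the definition of the derived set this $x_\alpha$ is \emph{isolated} in $X^\alpha$; hence there is a basic open set $U_{n(\alpha)}$ with $U_{n(\alpha)} \cap X^\alpha = \{x_\alpha\}$. The heart of the argument is to show that $\alpha \mapsto n(\alpha)$ is injective on $S$: given $\alpha < \beta$ in $S$ with $n(\alpha) = n(\beta) = n$, the nesting $X^\beta \subseteq X^{\alpha+1} \subseteq X^\alpha$ gives $\{x_\beta\} = U_n \cap X^\beta \subseteq U_n \cap X^\alpha = \{x_\alpha\}$, forcing $x_\beta = x_\alpha$; but $x_\beta \in X^\beta \subseteq X^{\alpha+1}$ while $x_\alpha \notin X^{\alpha+1}$, a contradiction.

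Injectivity of $n$ into $\N$ shows that $S$ is countable. I would then conclude as follows. If no ordinal $\rho$ satisfied $X^\rho = X^{\rho+1}$, then every ordinal would lie in $S$; in particular $\omega_1$ (the first uncountable ordinal) would be a subset of $S$ and hence inject into $\N$, which is absurd. So a stationary $\rho$ exists. Taking $\rho$ to be the least such ordinal, minimality forces every $\alpha < \rho$ into $S$, so the initial segment $\{\alpha : \alpha < \rho\}$ embeds into the countable set $S$, and therefore $\rho < \omega_1$ is a countable ordinal, as desired.

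The step I expect to require the most care is the passage from ``$S$ is countable'' to ``stabilization occurs at a \emph{countable} ordinal.'' Countability of $S$ bounds only the number of strict-decrease steps, not a priori the size of the ordinals appearing among them; the clean way around this is precisely the minimality argument above, which identifies the least stabilization ordinal $\rho$ with the initial segment $\{\alpha : \alpha < \rho\} \subseteq S$, so that countability of $S$ bounds $\rho$ below $\omega_1$ directly. Everything else---monotonicity of the derivatives, eventual constancy once stationary, and the use of the basis to isolate the removed point at each strict decrease---is routine.
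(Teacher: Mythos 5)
Your proof is correct, and it reaches the conclusion by a mechanism that differs from the paper's in an instructive way. The paper encodes every closed subset $F \subseteq X$ by the set $N(F) = \{n : U_n \cap F \neq \emptyset\}$ of basic open sets it meets; since $F \mapsto N(F)$ is injective and monotone on closed sets, a failure to stabilize below $\omega_1$ would yield a strictly decreasing transfinite sequence of subsets of $\mathbb{N}$ of length $\omega_1$, which is impossible. (The paper leaves that last impossibility implicit; it is itself proved by choosing, at each strict inclusion, an index that disappears.) You instead exploit the specific structure of the Cantor--Bendixson derivative: each strict decrease $X^{\alpha+1} \subsetneq X^{\alpha}$ exhibits an \emph{isolated} point of $X^{\alpha}$, which you tag by a basic open set isolating it, and the nesting $X^{\beta} \subseteq X^{\alpha+1}$ shows the tagging $\alpha \mapsto n(\alpha)$ is injective into $\mathbb{N}$. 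In effect you inline, via the topology, exactly the index-picking argument that the paper's final ``a contradiction'' delegates to the reader, so your version is more self-contained at that step; what the paper's coding buys in exchange is generality, since it shows that \emph{any} strictly monotone transfinite chain of closed (equivalently, open) subsets of a second countable space has countable length, without reference to isolated points. Your explicit handling of the tail facts---that stationarity propagates through successor and limit stages, and that minimality of $\rho$ forces the whole initial segment $\{\alpha : \alpha < \rho\}$ into the countable set $S$, bounding $\rho$ below $\omega_1$---is careful and correct, and fills in bookkeeping that the paper's contradiction framing sidesteps.
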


\begin{proof}
    Let $\{U_{n}\}_{n \in \N}$ be a countable open basis for $X$. Given any closed set $F \subset X$, set $N(F) = \{n : U_{n} \cap F \neq \emptyset\}$. Since $X \setminus F = \bigcup_{n \notin N(F)} U_{n}$, the map $F \mapsto N(F)$ is injective into the power set of the natural numbers. Also note that if $F_{1} \subseteq F_{2}$ are two closed subsets, then $N(F_{1}) \subseteq N(F_{2})$. 

    If we assume, to the contrary, that $X^\rho$ does not stabilize for any countable ordinal $\rho$, then by considering the transfinite sequence $(X^{\rho})_{\rho < \omega_{1}}$, where $\omega_{1}$ is the smallest uncountable ordinal, one obtains a monotonic transfinite sequence $(N(X^{\rho}))_{\rho < \omega_{1}}$ of uncountably many subsets of $\N$, a contradiction. 
\end{proof}

Let $X$ be a countable Stone space and $\rho_{X}$ be the smallest countable ordinal such that $X^{\rho_{X}}=X^{\rho_{X}+1}$. Now, $X^{\rho_{X}} = \emptyset$ as $X^{\rho_{X}}$ has no isolated points and is still countable (see, for example, \cite{munkres2000topology}, Theorem 27.7). Additionally, $X$ is compact and hence every family of closed subsets with the finite intersection property has nonempty intersection. Therefore, if $\lambda$ is a limit ordinal with $X^{\lambda} = \bigcap_{\alpha < \lambda} X^{\alpha} = \emptyset$, then for some $\alpha$ we must have $X^{\alpha} = \emptyset$. These two facts together imply that we have $\rho_{X} = \alpha_{X} + 1$ for some countable ordinal $\alpha_{X}$.

We will refer to $\alpha_{X}$ as the \emph{Cantor-Bendixson rank} of $X$. Since every point of $X^{\alpha_X}$ is isolated, there must be finitely many points, and we set $n_{X} = |X^{\alpha_{X}}| \in \N$. We often refer to the $n_{X}$ points of $X^{\alpha_{X}}$ as the set of \emph{maximal points} of $X$. We say that the \emph{characteristic pair} of $X$ is the pair $(\alpha_{X},n_{X})$. We say that a point $x \in X$ has \emph{rank} $\alpha$ if $x \in X^{\alpha} \setminus X^{\alpha+1}$. Note that every point $x \in X$ of rank $\alpha$ has a clopen neighborhood $U_{x} \subset X$ with characteristic pair exactly $(\alpha,1)$. This follows from the fact that $x$ having rank $\alpha$ implies that $x$ is not a limit point of $X^{\alpha}$. 

Finally, we are ready to state the classification theorem of countable Stone spaces. 

\begin{thm}\cite[Th\'{e}or\`{e}me 1]{MS1920} \label{thm:ctblestoneclassification}
    Two countable Stone spaces, $X$ and $Y$, are homeomorphic if and only if they have the same characteristic pair, i.e. $(\alpha_{X},n_{X}) = (\alpha_{Y},n_{Y})$. 
\end{thm}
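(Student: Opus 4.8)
The plan is to prove the two implications separately, with the forward (invariance) direction being routine and the converse carrying essentially all the weight. Throughout I use that a countable Stone space is second countable, so that \Cref{thm:ctbleCanBenrank} applies and the ranks in question are genuine countable ordinals.

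For necessity, suppose $f\colon X\to Y$ is a homeomorphism. Since ``being an accumulation point'' is a purely topological condition, $f$ carries $X'$ homeomorphically onto $Y'$, and a straightforward transfinite induction --- with the limit stage handled by the observation that $f$ commutes with the intersections defining $X^{\lambda}$ --- yields $f(X^{\beta})=Y^{\beta}$ for every ordinal $\beta$. Hence the least ordinal at which the Cantor--Bendixson derivatives stabilize is the same for $X$ and for $Y$, so $\alpha_X=\alpha_Y$, and $f$ restricts to a bijection $X^{\alpha_X}\to Y^{\alpha_Y}$, so $n_X=n_Y$.

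For sufficiency I would first reduce to the case $n=1$. Given $X$ with characteristic pair $(\alpha,n)$, its $n$ maximal points are isolated in $X^{\alpha}$; using the Hausdorff property together with the remark recorded just before the theorem (that a point of rank $\alpha$ has a clopen neighborhood of pair $(\alpha,1)$), I would produce pairwise disjoint clopen neighborhoods $U_1,\dots,U_n$ of the maximal points, each of pair $(\alpha,1)$, and then absorb the leftover clopen set $X\setminus\bigcup_i U_i$ (which contains no point of rank $\ge\alpha$, hence has rank $<\alpha$) into $U_1$ without changing its pair. This exhibits $X$ as a disjoint union of $n$ clopen pieces each of pair $(\alpha,1)$, and likewise for $Y$. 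Thus, once one knows that any two countable Stone spaces of pair $(\alpha,1)$ are homeomorphic, matching the pieces and taking the disjoint union of the individual homeomorphisms gives $X\cong Y$.

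The crux is therefore the claim that any two countable Stone spaces of pair $(\alpha,1)$ are homeomorphic, which I would prove by transfinite induction on $\alpha$. The base case $\alpha=0$ is a single point. For a space $X$ of pair $(\alpha,1)$ with unique maximal point $p$, note that $X$ is the one-point compactification of the open, locally compact, $\sigma$-compact subspace $X\setminus\{p\}$, which decomposes as a disjoint union of clopen compact pieces converging to $p$. When $\alpha=\beta+1$, the relation $p\in X^{\beta+1}=(X^{\beta})'$ forces infinitely many rank-$\beta$ points accumulating only at $p$; after grouping and absorbing lower-rank material one arranges $X\setminus\{p\}$ as $\bigsqcup_{k\in\N}C_k$ with each $C_k$ of pair $(\beta,1)$ converging to $p$, and the inductive hypothesis makes all the $C_k$ homeomorphic to a single model, so $X$ is the one-point compactification of countably many copies of that model, independent of $X$. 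The limit case $\alpha=\lambda$ is where I expect the main obstacle to lie: there one can only arrange $X\setminus\{p\}=\bigsqcup_k D_k$ with $D_k$ of pair $(\gamma_k,1)$ for some ranks $\gamma_k\nearrow\lambda$ cofinal in $\lambda$, and the analogous decomposition of $Y$ may use a \emph{different} cofinal sequence $\delta_k\nearrow\lambda$. The remaining work is to show that the one-point compactification of $\bigsqcup_k D_k$ does not depend on the chosen cofinal sequence of ranks; I would handle this by a back-and-forth (Cantor--Schr\"oder--Bernstein style) matching of the pieces, repeatedly using cofinality of both sequences to find, for each unmatched piece on one side, an unmatched piece of at least its rank on the other, and using that a piece of pair $(\gamma,1)$ can absorb adjacent pieces of strictly smaller rank without changing its homeomorphism type. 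Verifying that this matching can be carried out while respecting convergence to the distinguished points $p$ and $q$ is the delicate step.
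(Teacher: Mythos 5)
Your necessity direction, the reduction to $n=1$, and your successor-ordinal argument are all sound, and the successor case as you describe it (a one-point compactification of countably many copies of the rank-$\beta$ model) is a genuine simplification of what the paper does there. But the limit case is where the theorem lives, and what you have there is not yet a proof: the mechanism you describe --- matching whole pieces of $X\setminus\{p\}$ against whole pieces of $Y\setminus\{q\}$, after absorbing adjacent pieces of strictly smaller rank --- cannot work, even in principle. Absorption never enlarges the set of ranks realizable on a given side: any finite block of pieces of $X$ has characteristic pair $(\gamma,m)$ where $\gamma$ is the largest rank of a piece in the block, so the ranks of blocks of $X$ are exactly the ranks $\gamma_k$ of its pieces, and likewise for $Y$. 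Now take $\lambda=\omega$, write $C_\rho$ for the model space of pair $(\rho,1)$, and let $X$ be the one-point compactification of $C_2\sqcup C_4\sqcup C_6\sqcup\cdots$ and $Y$ that of $C_1\sqcup C_3\sqcup C_5\sqcup\cdots$. Both have pair $(\omega,1)$, so the theorem says they are homeomorphic; but every block of $X$ has even rank and every block of $Y$ has odd rank, so no block of one is homeomorphic to any block of the other, and no piece-to-piece or block-to-block matching can ever be completed to a homeomorphism.

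What is needed --- and this is precisely the content of the paper's argument --- is to cut \emph{inside} pieces: map a piece $A_1$ of $X$ homeomorphically onto a proper clopen subset $B_1'$ of a piece $B_1$ of $Y$ of at least its rank (possible by the inductive hypothesis, since a space of pair $(\delta,1)$ contains clopen subsets of pair $(\gamma,1)$ for every $\gamma<\delta$), then map the leftover $B_1\setminus B_1'$ onto a clopen subset $A_2'$ of the next piece $A_2$ of $X$, and continue alternating forever; the homeomorphism is assembled from these maps $f_n$ and $g_n^{-1}$, and continuity at the distinguished points is checked using the fact that the pieces eventually enter any neighborhood of $p$ and of $q$. (An equivalent repair is to pass through an intermediate space $Z$ whose pieces are those of $X$ and $Y$ interleaved with strictly alternating ranks, and show $Z\cong X$ and $Z\cong Y$ by same-side absorption; either way, an idea beyond matching existing pieces is required.) Note also that the paper never splits into successor and limit cases: it runs this leftover-passing back-and-forth uniformly, using nested clopen neighborhoods $U_n$ of the maximal point, the annuli $A_n=U_n\setminus U_{n+1}$, and a preliminary regrouping so that the characteristic pairs of the annuli of $X$ and of $Y$ interleave. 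You have correctly located where the difficulty sits, but the step you defer as ``delicate'' is the theorem.
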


\begin{proof}
    If $X$ and $Y$ are homeomorphic, then $X^{\rho} \cong Y^{\rho}$ for any countable ordinal $\rho$. In particular, we must have that $\alpha_{X} = \alpha_{Y}$ and $X^{\alpha_{X}} \cong Y^{\alpha_{Y}}$ so that $n_{X} = n_{Y}$ as well. 

    We will prove the reverse implication by transfinite induction on $\alpha$. For the base case, if $\alpha = 0$, then $X$ and $Y$ are simply finite sets and hence homeomorphic if and only if they are of the same cardinality. Now, let $X$ and $Y$ be two countable Stone spaces with characteristic pair $(\alpha_{0},1)$. Our induction hypothesis will be that for any $\alpha < \alpha_{0}$, two spaces with characteristic pair $(\alpha,1)$ are homeomorphic.

    Let $x_{0}\in X$ and $y_{0} \in Y$ be the maximal points of $X$ and $Y$, respectively. Let $\{U_{n}\}_{n \in \N}$ and $\{V_{n}\}_{n \in \N}$ be sequences of clopen subsets such that $\bigcap U_{n} = \{x_{0}\}$ and $\bigcap V_{n} = \{y_{0}\}$. Without loss of generality we assume that $U_{1} = X$ and $V_{1} = Y$. Now set 
    \begin{align*}
        A_{n} &\defeq U_{n} \setminus U_{n+1}, \text{ and } \\
        B_{n} &\defeq V_{n} \setminus V_{n+1}.
    \end{align*}
    Note that each $A_{n}$ and $B_{n}$ is clopen. Let $(\alpha_{n},a_{n})$ and $(\beta_{n},b_{n})$ be the sequences of characteristic pairs of $\{A_{n}\}$ and $\{B_{n}\}$, respectively. Up to replacing a given $A_{n}$ or $B_{n}$ with a finite disjoint union of the $A_{n}$'s or $B_{n}$'s, respectively, we may assume without loss of generality that for each $n$, either $\alpha_{n} < \beta_{n}$ or $a_{n} < b_{n}$, and either $\beta_{n} < \alpha_{n+1}$ or $b_{n}<b_{n+1}$. 

    We now use the induction hypothesis to construct a homeomorphism $f: X \rightarrow Y$. Note that since one of $\beta_{1}$ or $b_{1}$ is larger than $\alpha_{1}$ or $a_{1}$, $Y$ contains a clopen subset, $B'_{1}$, with characteristic pair $(\alpha_{1},a_{1})$. Thus, by the induction hypothesis, there exists a homeomorphism $f_{1}:A_{1} \rightarrow B_{1}'$. Similarly, there exists an $A_{2}' \subset A_{2}$ that is homeomorphic to $B_{2} \setminus B_{2}'$. This allows us to define a homeomorphism $g_{1}:B_{1} \setminus B_{1}' \rightarrow A_{2}'$. We continue this ``back-and-forth" process recursively to obtain, for all $n$, two homeomorphisms
    \begin{align*}
        f_{n}: A_{n}\setminus A_{n}' &\rightarrow B_{n}' \\
        g_{n}:B_{n} \setminus B_{n}' &\rightarrow A_{n+1}'.
    \end{align*}
    Note that the collection of maps of the form $f_{n}$ and $g_{n}^{-1}$ have disjoint support and the union of their supports is exactly $X \setminus \{x\}$. Additionally, the union of the images is exactly $Y \setminus \{y\}$. Thus we define the map
    \begin{align*}
        f(x) \defeq \begin{cases} 
            y_{0} &\text{ if } x=x_{0} \\
            f_{n}(x) &\text{ if } x \in A_{n}\setminus A_{n}' \\
            g_{n}^{-1}(x) &\text{ if } x \in A_{n+1}'
        \end{cases}
    \end{align*}
    where we take $A_{1}' = \emptyset$. Note that since each of the $f_{n}$ and $g_{n}$ are bijections, $f$ is also a bijection. 

    To finish we only need to verify that $f$ is continuous. As $X$ is compact, Hausdorff, and second countable it is metrizable and hence it suffices to check sequential continuity. Let $x_{i} \rightarrow x$ be a convergent sequence in $X$. The argument now splits into two cases.
    
    If $x$ has rank strictly less than $\alpha_{0}$, then $x$ is contained in one of the $A_{n}$. Therefore, throwing out finitely many terms of the sequence, we have that $f$ restricted to the sequence $(x_{i})$ is exactly equal to one of $f_{n}$ or $g_{n-1}^{-1}$ restricted to $(x_{i})$. However, now we must have that $f(x_{i}) \rightarrow x_{i}$ by the continuity of each of $f_{n}$ and $g_{n-1}^{-1}$. 

    Next we suppose that $x = x_{0}$. In this case the fact that $f(x_{i}) \rightarrow f(x) = y$ follows because we chose the collections $\{U_{n}\}$ and $\{V_{n}\}$ such that $\bigcap U_{n} = \{x\}$ and $\bigcap V_{n} = \{y\}$. We thus conclude that $f$ is a continuous bijection and hence a homeomorphism. 
    
    By transfinite induction, we now have that any two countable Stone spaces with characteristic pairs equal to $(\alpha,1)$, for any countable ordinal $\alpha$, are homeomorphic. To conclude the result for arbitrary $n$ we simply note that a space with characteristic pair $(\alpha,n)$ can be partitioned into exactly $n$ clopen subsets each with characteristic pair $(\alpha,1)$. 

    The final statement follows from the fact that the countable ordinal $\omega^{\alpha}\cdot n +1$ equipped with the order topology is exactly a countable Stone space with Cantor-Bendixson rank $\alpha$ and $n$ maximal points. 
\end{proof}

Following this theorem, from now on we write $X_{\alpha,n}$ to refer to the unique (up to homeomorphism) countable Stone space with Cantor-Bendixson rank $\alpha$ and $n$ maximal points. That is, the countable Stone space $X_{\alpha,n}$ is homeomorphic, but not necessarily order isomorphic, to the countable ordinal $\omega^{\alpha}\cdot n +1$ equipped with the order topology. See \Cref{fig:stoneexamples} for some examples of these spaces. We no longer need to take derived sets, so from here on the prime symbol, $'$, is used to decorate notation.

\begin{figure}[ht!]\centering
\begingroup%
  \makeatletter%
  \providecommand\color[2][]{%
    \errmessage{(Inkscape) Color is used for the text in Inkscape, but the package 'color.sty' is not loaded}%
    \renewcommand\color[2][]{}%
  }%
  \providecommand\transparent[1]{%
    \errmessage{(Inkscape) Transparency is used (non-zero) for the text in Inkscape, but the package 'transparent.sty' is not loaded}%
    \renewcommand\transparent[1]{}%
  }%
  \providecommand\rotatebox[2]{#2}%
  \newcommand*\fsize{\dimexpr\f@size pt\relax}%
  \newcommand*\lineheight[1]{\fontsize{\fsize}{#1\fsize}\selectfont}%
  \ifx\svgwidth\undefined%
    \setlength{\unitlength}{346.67356236bp}%
    \ifx\svgscale\undefined%
      \relax%
    \else%
      \setlength{\unitlength}{\unitlength * \real{\svgscale}}%
    \fi%
  \else%
    \setlength{\unitlength}{\svgwidth}%
  \fi%
  \global\let\svgwidth\undefined%
  \global\let\svgscale\undefined%
  \makeatother%
  \begin{picture}(1,0.94595513)%
    \lineheight{1}%
    \setlength\tabcolsep{0pt}%
    \put(0,0){\includegraphics[width=\unitlength,page=1]{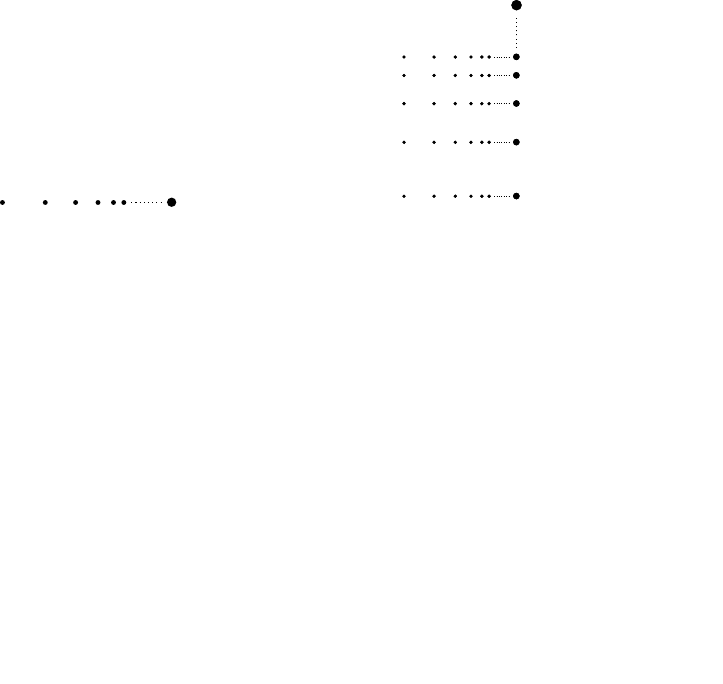}}%
    \put(0.05756282,0.72687188){\color[rgb]{0,0,0}\makebox(0,0)[lt]{\lineheight{1.25}\smash{\begin{tabular}[t]{l}$X_{1,1}$\end{tabular}}}}%
    \put(0.7807818,0.85561789){\color[rgb]{0,0,0}\makebox(0,0)[lt]{\lineheight{1.25}\smash{\begin{tabular}[t]{l}$X_{2,3}$\end{tabular}}}}%
    \put(0,0){\includegraphics[width=\unitlength,page=2]{StoneExamples.pdf}}%
    \put(0.06478316,0.36019628){\color[rgb]{0,0,0}\makebox(0,0)[lt]{\lineheight{1.25}\smash{\begin{tabular}[t]{l}$X_{\omega,2}$\end{tabular}}}}%
    \put(0,0){\includegraphics[width=\unitlength,page=3]{StoneExamples.pdf}}%
  \end{picture}%
\endgroup%

		\caption{Three examples of countable Stone spaces as subspaces of the plane. The top two figures are examples of spaces whose rank is a successor ordinal and the bottom is an example of a limit ordinal. In the bottom figure, the number $n \in \Z_{\geq 0}$ is used to represent a small copy of $X_{n,1}$.}
        \label{fig:stoneexamples}
\end{figure}

Our goal is to now prove the classification theorem stated in the introduction, recalled below for convenience. 

\begin{customthm}{A} 
    Let $X_{\alpha,n}$ be a countable Stone space. The group $\Homeo(X_{\alpha,n})$ is always locally bounded and is
    \begin{itemize}
        \item coarsely bounded if and only if $n=1$, and 
        \item boundedly generated but not coarsely bounded if and only $n > 1$ and $\alpha$ is a successor ordinal.
    \end{itemize}
\end{customthm}

We break this into several steps. The first is to verify that when $n=1$, the homeomorphism group is coarsely bounded. Next we will see that this immediately implies that all of these groups are locally bounded. We will then build Cayley-Abels-Rosendal graphs when $\alpha$ is a sucessor ordinal, thus proving bounded generation of $\Homeo(X_{\alpha,n})$. Finally, we will consider the limit ordinal case.

\subsection{Coarsely Bounded and Locally Bounded}

We begin with the case that $n=1$. 
Notice that since $X_{\alpha,1}$ has a unique maximal point,
any finite clopen partition of $X_{\alpha,1}$,
say as $P_0 \sqcup P_1 \sqcup \cdots \sqcup P_n$,
has the property that (after possibly relabeling)
$P_0$ contains this maximal point
and is thus by the classification of countable Stone spaces,
actually homeomorphic to $X_{\alpha,1}$.

We will address ourselves to more general Stone spaces
that retain this important property:
following Mann--Rafi~\cite{MR2023},
say that a Stone space $X$
is \emph{self-similar} if every finite clopen partition
$X = P_0 \sqcup P_1 \sqcup \cdots \sqcup P_n$
has the property that (after possibly relabeling)
$P_0$ has a clopen subset homeomorphic to the whole space $X$.

Recall that by our description of the compact--open topology on $\homeo(X)$, every identity neighborhood $U$ in $\homeo(X)$ contains an open subgroup $V = \Stab(\mathcal{P})$
comprising those elements of $\homeo(X)$
stabilizing some finite clopen partition $\mathcal{P}$ of $X$.

\begin{prop}\label{CBbygraphs}
Suppose $X$ is a self-similar Stone space,
and that $U$ is an open subset of $\Homeo(X)$.
Let $\mathcal{P} = P_0 \sqcup P_1 \sqcup \cdots \sqcup P_n$
be a clopen partition of $X$
such that $U$ contains $V = \Stab(\mathcal{P})$
and such that $P_0$ contains a clopen subset
homeomorphic to $X$.
There exists an involution $f \notin V$
such that the graph $\Gamma(V,F)$ as defined in \Cref{sec:CARgraphs}
associated to $V$ and to $F = \{f\}$
has diameter at most three (so is connected)
and admits a vertex-transitive and edge-transitive action of $\homeo(X)$.
\end{prop}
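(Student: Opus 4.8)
The plan is to read off the graph $\Gamma(V,F)$ entirely in terms of clopen partitions. Writing $G=\Homeo(X)$, the vertex set $G/V$ is identified with the $G$-orbit of the partition $\mathcal P$, i.e.\ with the clopen partitions of $X$ whose pieces realize the same multiset of homeomorphism types as $P_0,\dots,P_n$; two such partitions are adjacent exactly when they differ by an element of the double coset $VfV$. In this language vertex-transitivity is automatic (it is just the left-multiplication action on $G/V$), and edge-transitivity is free once $f$ is an involution: by the correspondence between oriented edge-orbits and double cosets of $V$ recalled in \Cref{sec:CARgraphs}, the set $F=\{f\}$ gives a single oriented edge-orbit, and $f=f^{-1}$ forces $VfV=Vf^{-1}V$, so reversing an edge preserves its orbit and $G$ acts transitively on unoriented edges. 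It then remains to build $f$ and to bound the diameter, and I would phrase the latter as: every partition in the orbit of $\mathcal P$ can be carried to $\mathcal P$ by at most three elements of $VfV$.

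For the involution, I would use the hypothesis that $P_0$ contains a clopen copy $Q\cong X$. Fixing a homeomorphism $\psi\colon Q\to X$ and setting $T=\psi^{-1}(X\setminus P_0)\subseteq Q\subseteq P_0$ produces a clopen set $T$ homeomorphic to $X\setminus P_0=P_1\sqcup\cdots\sqcup P_n$ and disjoint from it. Choosing a homeomorphism $\theta\colon X\setminus P_0\to T$, I define $f$ to be $\theta$ on $X\setminus P_0$, $\theta^{-1}$ on $T$, and the identity elsewhere; this is a well-defined involution of $X$. It does not lie in $V$, since $f$ carries the piece $P_1$ into $\theta(P_1)\subseteq T\subsetneq P_0$ (the containment is proper because $Q\cong X$ meets the maximal points of $X$, so $X\setminus P_0\not\cong X\cong Q$), which no element of $\Stab(\mathcal P)$ does. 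The point of this choice is that $f.\mathcal P$ is the ``fully compressed'' partition whose big piece is $X\setminus T$ and all of whose remaining pieces $\theta(P_1),\dots,\theta(P_n)$ lie inside the single clopen copy of $X$ sitting in $P_0$; thus a single $f$-move sweeps the entire partition into a standard sub-copy.

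To bound the diameter I would run a back-and-forth. Starting from an arbitrary vertex $\mathcal A=g.\mathcal P$, its big piece (the one carrying a copy of $X$) is homeomorphic to $X$ and hence self-similar, so one $f$-move relocates all of the remaining pieces of $\mathcal A$ into a clopen copy of $X$ contained in that big piece; likewise one $f$-move compresses $\mathcal P$ into the copy $Q\subseteq P_0$. The key flexibility making this work in a single move is that, by \Cref{thm:ctblestoneclassification}, $\Homeo(X)$ acts transitively on clopen subsets of a prescribed homeomorphism type whose complement also has a prescribed type, so the $V$-translates of $f.\mathcal P$ realize all admissible placements of the compressed pieces inside $P_0$. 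After compressing both $\mathcal A$ and $\mathcal P$, the two resulting partitions agree up to the homeomorphism type of the leftover complement; a final $f$-move (or a direct adjacency) reconciles this discrepancy, yielding a path $\mathcal A\to\cdot\to\cdot\to\mathcal P$ of length at most three.

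The main obstacle is exactly this last bookkeeping: I must verify that the two compressed configurations can always be matched within the budget of three moves, which amounts to checking that the complement-type mismatch produced when compressing into copies of $X$ sitting inside different ambient pieces can be absorbed by one additional application of $VfV$. Self-similarity is the engine throughout---it is what guarantees that each big piece contains a copy of $X$ into which everything can be swept---while the transitivity of $\Homeo(X)$ on clopen sets of fixed type, a consequence of the classification in \Cref{thm:ctblestoneclassification}, is what keeps the number of required moves bounded by a constant independent of $\mathcal A$.
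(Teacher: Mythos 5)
Your construction of the involution $f$ matches the paper's in substance, and your observation that vertex- and edge-transitivity of $\Gamma(V,\{f\})$ come for free from the double-coset description in \Cref{sec:CARgraphs} is correct (the paper instead builds a geometric partition graph, proves transitivity by hand, and then identifies it with $\Gamma(V,F)$; your order of operations is legitimate). The genuine gap is the diameter bound, which you yourself flag as ``the main obstacle'': the compress-and-reconcile step, as sketched, can fail, and the missing ingredient is precisely the heart of the paper's proof. If you compress the small pieces of $\mathcal{A}=g.\mathcal{P}$ into some clopen copy of $X$ inside its big piece $A_0$, and the small pieces of $\mathcal{P}$ into $Q\subseteq P_0$, nothing prevents the two compressed configurations from overlapping (both could land inside $A_0\cap P_0$, for instance), and overlapping configurations are \emph{not} adjacent: adjacency forces each vertex's small pieces to lie inside the other's big piece, hence to be disjoint from the other's small pieces. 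What rescues the argument is a dichotomy extracted from self-similarity, applied to a clopen copy of $X$ inside $A_0$ partitioned by $P_0$ and its complement (and symmetrically): either $A_0\cap P_0$ contains a clopen copy of $X$, in which case both partitions compress into disjoint sub-copies of it and have a \emph{common} neighbor (distance $\le 2$); or \emph{both} $A_0\setminus P_0$ and $P_0\setminus A_0$ contain clopen copies of $X$, in which case compressing $\mathcal{A}$ into the former and $\mathcal{P}$ into the latter yields neighbors whose small pieces are disjoint by construction, hence adjacent (distance $\le 3$). Without this case analysis the three-move budget cannot be certified.

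There is also a generality problem in the tool you invoke for reconciliation. \Cref{thm:ctblestoneclassification} classifies \emph{countable} Stone spaces, but \Cref{CBbygraphs} concerns arbitrary self-similar Stone spaces---the Cantor set being the basic non-countable example---so transitivity of $\Homeo(X)$ on clopen sets of prescribed type with prescribed complement type cannot be quoted from it. (Your justification that $f\notin V$ has the same defect: it appeals to ``maximal points of $X$,'' which need not exist; the properness $T\subsetneq P_0$ is immediate anyway, since $T=\psi^{-1}(X\setminus P_0)$ is a proper subset of $Q$ because $P_0\neq\varnothing$.) Relatedly, the dictionary you lean on---that the $V$-translates of $f.\mathcal{P}$ realize \emph{all} admissible placements of the small pieces inside $P_0$---is not automatic: it requires gluing homeomorphisms piece by piece while fixing each $P_j$, $j>0$, pointwise, which is exactly the edge-transitivity argument the paper carries out for its geometric graph. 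So the two steps you treat as free are, respectively, the paper's gluing lemma and its self-similarity dichotomy; both must be proved, using self-similarity alone, for your route to close.
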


\begin{proof}
We first construct a graph $\Gamma$ associated to the given partition $\mathcal{P}$ that admits a vertex-transitive and edge-transitive action of $\Homeo(X)$.
We show that the graph $\Gamma$ has diameter at most three, and then realize it as combinatorially identical to the graph $\Gamma(V,F)$ as defined in \Cref{sec:CARgraphs} associated to the subgroup $V = \Stab(\mathcal{P})$ and the set $F = \{f\}$ for some involution $f \notin V$.

For the vertex set of $\Gamma$, take the set of partitions $\mathcal{Q} = Q_0 \sqcup Q_1 \sqcup \cdots \sqcup Q_n$,
where each $Q_i$ is homeomorphic to $P_i$.
If $\mathcal{Q}$ and $\mathcal{Q}'$ are partitions,
declare the associated vertices to be adjacent
when $Q_1 \sqcup \cdots \sqcup Q_n \subset Q'_0$,
from which it follows that $Q'_1 \sqcup \cdots \sqcup Q'_n \subset Q_0$.
Notice that this is actually made possible only by the assumption that $Q_0$ contains a clopen subset homeomorphic to $X$.

Now if $\mathcal{Q}$ and $\mathcal{Q}'$ are adjacent,
then provided $i > 0$,
we have that $Q_i$ and $Q'_i$ are disjoint,
and there is an obvious involution 
which exchanges each $Q_i$ with $Q'_i$
and fixes the complement $Q_0 \cap Q'_0$ pointwise.

Let us show that the graph so described has diameter at most three.
Given two partitions $\mathcal{Q}$ and $\mathcal{Q'}$,
if $Q_1\sqcup\cdots\sqcup Q_n$ is disjoint
from $Q'_1\sqcup \cdots\sqcup Q'_n$, then the partitions
are adjacent as vertices of $\Gamma$.
If not, observe that because $Q_0$ and $Q'_0$
each contain clopen subsets homeomorphic to $X$,
either $Q_0 \cap Q'_0$ does too,
or if not, then \emph{both} $Q_0 - Q'_0$ and $Q'_0 - Q_0$ do.
We will show that in the former case,
$\mathcal{Q}$ and $\mathcal{Q}'$ have a common neighbor,
while in the latter there exists
$\mathcal{R}$ adjacent to $\mathcal{Q}$
and $\mathcal{R}'$ adjacent to $\mathcal{Q}'$
which are adjacent.

Now in the former case,
$Q_0 \cap Q'_0$ contains
disjoint clopen copies of each $P_i$ for $i > 0$;
call each copy $Q''_i$,
and set $Q''_0$ to be the complement
of $Q''_1 \sqcup \cdots \sqcup Q''_n$.
By construction,
we see that this partition $\mathcal{Q}''$
is adjacent to both $\mathcal{Q}$ and $\mathcal{Q}''$.

In the latter case,
we proceed similarly
by finding copies of $P_i$,
say $R_i \subset Q_0 - Q'_0$
and $R'_i \subset Q'_0 - Q_0$
and setting $R_0$ to be the complement
of $R_1 \sqcup \cdots \sqcup R_n$
and $R'_0$ the complement of $R'_1 \sqcup \cdots \sqcup R'_n$.
The partition $\mathcal{R}$
is adjacent to $\mathcal{Q}$ by construction,
and $\mathcal{R}'$ is adjacent to $\mathcal{Q}'$,
and because $Q_0 - Q'_0$ is disjoint from $Q'_0 - Q_0$,
the partitions $\mathcal{R}$ and $\mathcal{R}'$
are themselves adjacent.
Together these cases demonstrate that the graph $\Gamma$
has diameter at most three.

The group $\Homeo(X)$
acts transitively on the vertices of $\Gamma$
by construction.
It also acts transitively on the edges:
given vertex transitivity,
it suffices to show that if $\mathcal{Q}$ and $\mathcal{Q'}$ are partitions
corresponding to vertices which are each adjacent to a common vertex represented by a partition $\mathcal{P}$,
then there is a homeomorphism of $X_{\alpha,1}$
taking $\mathcal{Q}$ to $\mathcal{Q}'$
while fixing $\mathcal{P}$.
Such a homeomorphism exists:
writing $\mathcal{P}$ as $P_0 \sqcup P_1 \sqcup \cdots \sqcup P_n$,
we have by assumption that $Q_i$ and $Q'_i$
are contained in $P_0$ for $i > 0$,
so a homeomorphism $f$ taking each $Q_i$ to $Q'_i$
for $i > 0$ may be chosen which fixes each $P_j$
pointwise for $j > 0$.
The homeomorphism $f$ thus also preserves
$P_0$ setwise,
hence fixes the vertex of $\Gamma$ corresponding to $\mathcal{P}$.

Finally, we see that $\Gamma$
may be identified with the graph $\Gamma(V,F)$
as defined in \Cref{sec:CARgraphs}
associated to the subgroup $V = \Stab(\mathcal{P})$
and the set $F = \{f\}$ for some involution $f \notin V$.
Indeed, a homeomorphism of $X$ fixes the vertex associated to $\mathcal{P}$ precisely when it is contained in $V$,
and we saw that any two adjacent vertices are exchanged by some involution and that $\Gamma$
has one $\Homeo(X)$-orbit of edges,
so choosing for $f$ any such involution not contained in $V$ (for example, one which exchanges $\mathcal{P}$ itself with an adjacent partition) proves the claim.
\end{proof}

\begin{figure}[ht!]\centering
        \def\svgwidth{\textwidth}
\begingroup%
  \makeatletter%
  \providecommand\color[2][]{%
    \errmessage{(Inkscape) Color is used for the text in Inkscape, but the package 'color.sty' is not loaded}%
    \renewcommand\color[2][]{}%
  }%
  \providecommand\transparent[1]{%
    \errmessage{(Inkscape) Transparency is used (non-zero) for the text in Inkscape, but the package 'transparent.sty' is not loaded}%
    \renewcommand\transparent[1]{}%
  }%
  \providecommand\rotatebox[2]{#2}%
  \newcommand*\fsize{\dimexpr\f@size pt\relax}%
  \newcommand*\lineheight[1]{\fontsize{\fsize}{#1\fsize}\selectfont}%
  \ifx\svgwidth\undefined%
    \setlength{\unitlength}{496.66724017bp}%
    \ifx\svgscale\undefined%
      \relax%
    \else%
      \setlength{\unitlength}{\unitlength * \real{\svgscale}}%
    \fi%
  \else%
    \setlength{\unitlength}{\svgwidth}%
  \fi%
  \global\let\svgwidth\undefined%
  \global\let\svgscale\undefined%
  \makeatother%
  \begin{picture}(1,0.45055636)%
    \lineheight{1}%
    \setlength\tabcolsep{0pt}%
    \put(0,0){\includegraphics[width=\unitlength,page=1]{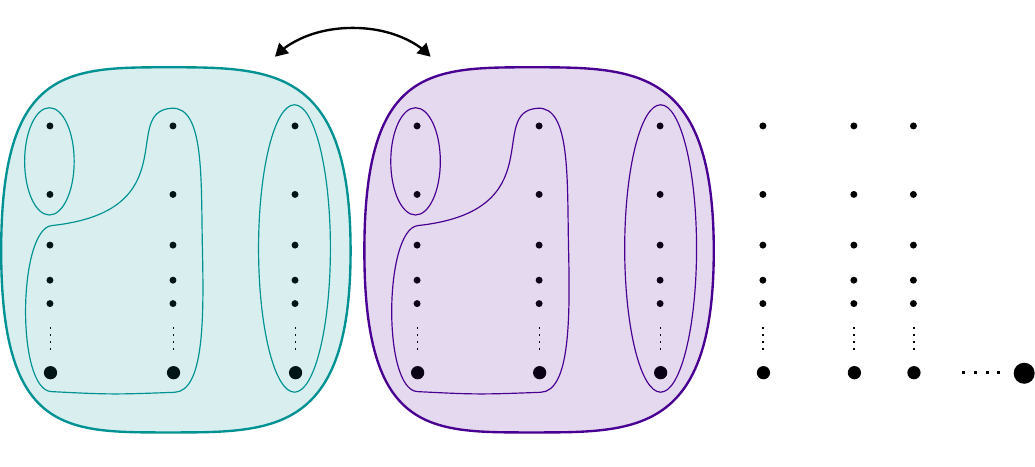}}%
    \put(0.33397381,0.43598356){\color[rgb]{0,0,0}\makebox(0,0)[lt]{\lineheight{1.25}\smash{\begin{tabular}[t]{l}$f$\end{tabular}}}}%
    \put(0.1617842,0.00281904){\color[rgb]{0,0,0}\makebox(0,0)[lt]{\lineheight{1.25}\smash{\begin{tabular}[t]{l}$P'$\end{tabular}}}}%
    \put(0.51692536,0.00281904){\color[rgb]{0,0,0}\makebox(0,0)[lt]{\lineheight{1.25}\smash{\begin{tabular}[t]{l}$P''$\end{tabular}}}}%
  \end{picture}%
\endgroup%

		\caption{An example of the sets $P'$ and $P''$ and the involution $f$ for $X_{2,1}$. The subsets of $P'$ are meant to represent the partition sets $P_{1},P_{2},$ and $P_{3}$. Note that $P_{0}$ is simply the complement of $P'$ and hence not drawn explicitly.}
        \label{fig:CBProof1}
\end{figure}

This proposition gives us the desired corollary.

\begin{cor}\label{prop:degree1CB}
Supposing that the Stone space $X$ is self-similar,
the group $\homeo(X)$ is coarsely bounded.
In particular if $\alpha$ is a countable ordinal,
the group $\Homeo(X_{\alpha,1})$ is coarsely bounded.
\end{cor}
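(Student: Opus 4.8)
The plan is to verify coarse boundedness straight from the definition, using \Cref{CBbygraphs} to bound how far an arbitrary homeomorphism can move a basepoint. First I observe that the ``in particular'' clause reduces to the general statement: $X_{\alpha,1}$ is self-similar, since its unique maximal point lies in exactly one block $P_0$ of any finite clopen partition, and by the classification of countable Stone spaces that block is homeomorphic to $X_{\alpha,1}$ itself. So it suffices to prove the first sentence.

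So let $X$ be self-similar and write $G = \Homeo(X)$. Suppose $G$ acts continuously by isometries on a (pseudo-)metric space $(Y,d)$ and fix a basepoint $y_0 \in Y$; I must show the orbit $G.y_0$ is bounded. The natural move is to choose an identity neighborhood \emph{adapted to this action}: set $U = \{g \in G : d(y_0, g.y_0) < 1\}$, which is open by continuity. By the description of the compact--open topology on $\Homeo(X)$, the set $U$ contains an open subgroup $V = \Stab(\mathcal{P})$ for some finite clopen partition $\mathcal{P}$, and by self-similarity I may relabel so that $P_0$ contains a clopen copy of $X$. Now \Cref{CBbygraphs} supplies an involution $f \notin V$ for which the graph $\Gamma(V, \{f\})$ has diameter at most three.

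The key step is to convert this diameter bound into a uniform decomposition of elements of $G$. Since the vertices of $\Gamma(V,\{f\})$ are the cosets $G/V$, with $gV$ and $hV$ adjacent exactly when $g^{-1}h \in VfV$, a path of length $k$ from the base vertex $V$ to $gV$ witnesses $g \in (VfV)^k$; as the diameter is at most three, every $g \in G$ lies in $\bigcup_{k \le 3}(VfV)^k$. Collapsing consecutive factors of the subgroup $V$, I may thus write each $g = w_0 f w_1 f \cdots f w_k$ with $k \le 3$ and every $w_i \in V$. Estimating via the triangle inequality and left-invariance of the isometric action, and writing $c = d(y_0, f.y_0)$, each factor $w_i \in V \subseteq U$ moves $y_0$ a distance less than $1$ while each occurrence of $f$ contributes $c$, so $d(y_0, g.y_0) < 4 + 3c$ independently of $g$. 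Hence $G.y_0$ is bounded, and since the action and basepoint were arbitrary, $G$ is coarsely bounded.

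I expect the only delicate point to be the bookkeeping in the previous paragraph: correctly reading ``diameter at most three'' in $\Gamma(V,\{f\})$ as the containment $G = \bigcup_{k \le 3}(VfV)^k$, and then turning the resulting product decomposition into a bound uniform over all of $G$. Everything else is routine, and notably the argument never needs $V$ itself to be coarsely bounded: it is enough that $V \subseteq U$, which forces the $V$-orbit of $y_0$ to have diameter at most $1$ in this particular action. (Alternatively, when $G$ is Polish one could instead apply Rosendal's criterion, \Cref{lem:RosendalsCriterion}, to $A = G$: noting $VfV \subseteq UfU \subseteq (F'U)^2$ for $F' = \{1,f\}$, the same decomposition gives $G \subseteq (F'U)^{6}$ after padding with $1 \in F'U$. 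The direct argument above has the advantage of not requiring metrizability of $G$, so it covers self-similar Stone spaces that are not second countable.)
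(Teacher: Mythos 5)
Your proposal is correct and takes essentially the same approach as the paper: both choose the identity neighborhood $U$ adapted to the given isometric action, invoke \Cref{CBbygraphs} to obtain $V = \Stab(\mathcal{P}) \subseteq U$ and an involution $f$ with $\Gamma(V,\{f\})$ of diameter at most three, and convert that diameter bound into a uniform orbit bound via left-invariance and the triangle inequality. The only difference is presentational: the paper packages the estimate as a Lipschitz map $V\Gamma \to X$ defined via a coset transversal, whereas you unpack the diameter bound into the containment $G = \bigcup_{k\le 3}(VfV)^k$; the underlying computation is identical.
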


\begin{proof}
We first prove that
every orbit in every continuous action of $\homeo(X)$
by isometries on a metric space
is Lipschitz-dominated by an action on a
connected graph as in the statement of \Cref{CBbygraphs}. Suppose that $\homeo(X)$
acts continuously and by isometries on a metric space $X$.
Take for $U$ in the statement above the open set comprising
those elements failing to move a point $x \in X$
a distance more than $\epsilon > 0$,
and construct the graph $\Gamma(V,F)$
with vertex stabilizer associated to $V$ and $F$.
Choosing a transversal of coset representatives $\mathcal{T}$ of $V$ defines a map $\Phi\colon V\Gamma \to X$ as $gV \mapsto g.x$ for $g \in \mathcal{T}$.
Write $\delta$ for $d(x, f.x)$,
where $f$ is the involution in the statement of the proposition.
The map $\Phi$
is Lipschitz: Let $gV$ and $hV$ be connected by an edge in $\Gamma(V,F)$ so that $Vg^{-1}hV = VfV = VFV$. Then 
\begin{align*}
    d(\Phi(gV),\Phi(hV)) &= d(g.x,h.x) \\
    &= d(x,g^{-1}h.x) \\
    &\leq 2\epsilon + \delta.
\end{align*} 


Since every such graph $\Gamma(V,F)$ has diameter at most three,
we conclude.
\end{proof}

We note that the above corollary also follows from \cite[Proposition 3.1]{MR2023} by considering $\homeo(X)$ as a continuous quotient of the mapping class group of the genus zero surface with end space homeomorphic to $X$.

\Cref{prop:degree1CB} has as an immediate corollary that all of the homeomorphism groups we are considering are locally bounded. 

\begin{cor} \label{cor:locallycb}
    The group $\Homeo(X_{\alpha,n})$ is locally bounded.
\end{cor}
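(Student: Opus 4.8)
The plan is to exhibit a single coarsely bounded \emph{open} identity neighborhood of $\Homeo(X_{\alpha,n})$, which is exactly what local boundedness requires. The natural candidate is the open subgroup preserving a clopen partition of $X_{\alpha,n}$ into $n$ pieces, each homeomorphic to the self-similar space $X_{\alpha,1}$: the key point is that the stabilizer of such a partition decomposes as a finite direct product of copies of $\Homeo(X_{\alpha,1})$, each of which is coarsely bounded by \Cref{prop:degree1CB}.

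First I would construct the partition. The space $X_{\alpha,n}$ has exactly $n$ points of rank $\alpha$, its maximal points $x_1,\ldots,x_n$; as noted after \Cref{thm:ctblestoneclassification}, each $x_i$ admits a clopen neighborhood with characteristic pair $(\alpha,1)$, and since $X^{\alpha}=\{x_1,\dots,x_n\}$ is finite, hence discrete, these neighborhoods $U_1,\ldots,U_n$ may be taken pairwise disjoint with each $U_i\cong X_{\alpha,1}$. The remainder $R = X_{\alpha,n}\setminus(U_1\cup\cdots\cup U_n)$ is clopen and contains no point of rank $\alpha$, so its Cantor--Bendixson rank is strictly below $\alpha$. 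Absorbing $R$ into the first piece, I set $P_1 = U_1\cup R$ and $P_i = U_i$ for $i\geq 2$; because $(U_1\sqcup R)^{\alpha}=U_1^{\alpha}\sqcup R^{\alpha}=\{x_1\}$, the block $P_1$ still has characteristic pair $(\alpha,1)$, so $\mathcal{P}=P_1\sqcup\cdots\sqcup P_n$ is a clopen partition with every block homeomorphic to $X_{\alpha,1}$.

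Next I would analyze the subgroup $V=\{f\in\Homeo(X_{\alpha,n}) : f(P_i)=P_i\text{ for all }i\}$. This is open, being the kernel of the continuous homomorphism from the basic open neighborhood $U_{\mathcal{P}}$ (the homeomorphisms permuting the blocks) to the discrete group $\operatorname{Sym}(\{1,\ldots,n\})$ recording the induced permutation. Restriction to the blocks gives a topological group isomorphism $V\cong\prod_{i=1}^n\Homeo(P_i)\cong\Homeo(X_{\alpha,1})^n$, with inverse given by gluing homeomorphisms across the clopen blocks, and by \Cref{prop:degree1CB} each factor is coarsely bounded. It then suffices to know that a finite direct product of coarsely bounded groups is coarsely bounded: for a continuous isometric action of $V=G_1\times\cdots\times G_n$ on a metric space with basepoint $x_0$, each factor acts continuously by isometries through its inclusion, so each orbit $G_i\cdot x_0$ is bounded, and telescoping $d(x_0,g\cdot x_0)$ along the factors (using that each partial product acts by isometries) bounds the full orbit. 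Hence $V$ is a coarsely bounded open identity neighborhood and $\Homeo(X_{\alpha,n})$ is locally bounded.

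I expect the product decomposition to be the crux. The genuinely substantive steps are verifying that the compact--open topology on $V$ matches the product topology under $V\cong\Homeo(X_{\alpha,1})^n$, and that coarse boundedness passes to finite products; the partition construction is routine once one observes that a clopen set of strictly smaller rank can be absorbed without changing the characteristic pair. I would emphasize that this argument is uniform in $(\alpha,n)$---in particular it covers the limit-ordinal case---so it does not presuppose bounded generation and is genuinely more general than any route through \Cref{boundedlygeneratedimplieslocallybounded}.
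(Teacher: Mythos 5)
Your proof is correct and takes essentially the same approach as the paper: both pass to a clopen partition of $X_{\alpha,n}$ into $n$ blocks, each homeomorphic to the self-similar space $X_{\alpha,1}$, identify the blockwise stabilizer with $\Homeo(X_{\alpha,1})^n$, and conclude via \Cref{prop:degree1CB} together with the fact that finite products of coarsely bounded groups are coarsely bounded. The only (harmless) difference is that the paper works with the full partition stabilizer $\Stab(\mathcal{P})$, handled as an extension of $S_n$ by $\prod_{i=1}^n \Homeo(X_{\alpha,1})$, whereas you use the open kernel of $\Stab(\mathcal{P}) \to S_n$ directly, which sidesteps the extension step.
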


\begin{proof}
    Consider a partition of $X_{\alpha,n}$ given by $\cP = P_{1}\sqcup \cdots \sqcup P_{n}$ where each $P_{i}$ contains exactly one maximal point. 
    
    Now, the stabilizer $\Stab(\cP)$ is open
    and sits in a (continuous) short exact sequence

    \[
    \begin{tikzcd}
        1 \ar[r] & \prod_{i=1}^n \Homeo(X_{\alpha,1})
        \ar[r] & \Stab(\cP) \ar[r] & S_n \ar[r] & 1.
    \end{tikzcd}
    \]
    (Here the topology on the finite symmetric group
    $S_n$ is discrete.)
    One can show directly that the topological group
    $\Stab(\cP)$ is coarsely bounded,
    since any continuous action by isometries on a metric space with unbounded orbits
    would produce an action of either the kernel
    (and hence the coarsely bounded group $\Homeo(X_{\alpha,1})$)
    or the quotient with unbounded orbits,
    but both are coarsely bounded.
    Thus $\Stab(\cP)$ is a coarsely bounded identity neighborhood in $\Homeo(X_{\alpha,n})$.
\end{proof}

\subsection{Cayley-Abels-Rosendal Graphs for Successor Ordinals}\label{ssec:successorordinals}

Next we turn to the case when $n\geq 2$ and $\alpha$ is a successor ordinal. We will build an unbounded Cayley-Abels-Rosendal graph for $\Hoan$, proving the following.

\begin{prop}\label{prop:successor}
    Let $\alpha$ be a successor ordinal and $n$ an integer satisfying $n \ge 2$. Then $\Homeo(X_{\alpha,n})$ is boundedly generated and not coarsely bounded. 
\end{prop}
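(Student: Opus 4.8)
Write $\alpha = \beta + 1$. The plan is to build the Cayley--Abels--Rosendal graph explicitly and read off both conclusions from it. The starting point is the structure of $X_{\alpha,1}$ for successor $\alpha$: its unique maximal point $m$ is the limit of a sequence of clopen ``chunks,'' each homeomorphic to $X_{\beta,1}$, and any clopen subset avoiding $m$ has Cantor--Bendixson rank at most $\beta$. Thus $X_{\alpha,n}$ is, up to homeomorphism, $n$ disjoint clopen copies of $X_{\alpha,1}$ with maximal points $m_1,\dots,m_n$. I would fix the partition $\mathcal{P}_0 = P_1 \sqcup \cdots \sqcup P_n$ into clopen pieces, each homeomorphic to $X_{\alpha,1}$ and containing exactly one $m_i$, and take $V = \Stab(\mathcal{P}_0)$, which is open and coarsely bounded by (the proof of) \Cref{cor:locallycb}. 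For $F$ I would take a single involution $f$ (symmetric automatically) transporting one clopen $X_{\beta,1}$-chunk of $P_2$ into $P_1$ and one of $P_1$ into $P_2$ while fixing the maximal points; then $\Gamma = \Gamma(V,F)$ has one orbit of edges and coarsely bounded open vertex stabilizers, so the only things to check are that $\Gamma$ is connected (equivalently, $V \cup VFV$ generates) and that $\Gamma$ has infinite diameter.

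For connectivity/generation, the main step, I would show $G = \langle V, f\rangle$. Since $V$ surjects onto the symmetric group permuting $P_1,\dots,P_n$, I may left-multiply any $g$ by an element of $V$ to assume $g$ lies in the open subgroup $K \le G$ fixing each $m_i$. For $g \in K$ the key observation is that $E_i = P_i \triangle g(P_i)$ is clopen and contains no maximal point, since $g(m_i)=m_i$ forces $m_i \in P_i \cap g(P_i)$ and $m_j \notin P_i \cup g(P_i)$ for $j \ne i$; as $E_i$ is clopen, $E_i^{\gamma} = E_i \cap X_{\alpha,n}^{\gamma}$, so $E_i^{\alpha} = \varnothing$ and $E_i$ is a compact Stone space of rank at most $\beta$, hence homeomorphic to some $X_{\gamma,k}$ with $\gamma \le \beta$ and $k$ finite. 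This is exactly where the successor hypothesis enters: the discrepancy between $g$ and $V$ is carried by finitely many rank-$\le\beta$ pieces, each embeddable in an $X_{\beta,1}$-chunk and therefore transportable between pieces by an element of $VFV$. I would then produce, by finitely many such transports, an element $h \in \langle V, f\rangle$ with $hg(P_i) = P_i$ for all $i$, i.e.\ $hg \in V$, whence $g \in \langle V, f\rangle$. By \Cref{prop:CARgraph} this makes $\Gamma$ a Cayley--Abels--Rosendal graph and $G$ boundedly generated.

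For ``not coarsely bounded'' I would exhibit an unbounded continuous isometric action, which also witnesses that $\Gamma$ has infinite diameter. On $K$ define the net transport map $t = (t_1,\dots,t_n)\colon K \to \mathbb{Z}^n$ by $t_i(g) = \bigl|(g(P_i)\setminus P_i)^{\beta}\bigr| - \bigl|(P_i\setminus g(P_i))^{\beta}\bigr|$, the finite net change in the number of rank-$\beta$ points in the $i$-th bin (finite by the paragraph above). Because $g$ permutes the rank-$\beta$ points, these entries sum to zero, so $t$ lands in $\{v : \sum_i v_i = 0\} \cong \mathbb{Z}^{n-1}$, and a short cocycle computation shows $t$ is a homomorphism. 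It is continuous since $V \cap K \subseteq \ker t$ (elements of $V\cap K$ fix each $P_i$ setwise), and its image contains every $e_i - e_j$, as these are realized by transports; hence for $n \ge 2$ the image is an unbounded subgroup of $\mathbb{Z}^{n-1}$. Letting $K$ act on $\mathbb{Z}^{n-1}$ by translation through $t$ gives a continuous isometric action with unbounded orbits, so $K$ is not coarsely bounded; since $K$ is open of finite index, it is coarsely equivalent to $G$, and so $G$ is not coarsely bounded. Equivalently, $t$ is a height function on $\Gamma$ changing by a bounded amount across each edge, forcing infinite diameter, consistent with $G$ being quasi-isometric to $\Gamma$ by \Cref{milnorschwarz}.

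The main obstacle is the connectivity step, and specifically the claim that finitely many copies of the single transport $f$ move $g$ into $V$. Its content is twofold: the structural fact that $E_i = P_i \triangle g(P_i)$ has rank at most $\beta$ and is one compact piece (hence finite data), and the fact that moving a rank-$\le\beta$ clopen set between two pieces can always be arranged inside the double coset $VfV$ by absorbing it into an $X_{\beta,1}$-chunk. Both rely on $\alpha = \beta+1$ providing a largest chunk type $X_{\beta,1}$; for limit $\alpha$ the discrepancies have rank unbounded below $\alpha$, no single finite $F$ suffices, and precisely this failure is what the limit-ordinal case must exploit.
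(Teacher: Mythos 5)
There is a genuine gap, and it is fatal as written: with your choice of $F$, the graph $\Gamma(V,\{f\})$ is \emph{disconnected}, i.e.\ $\langle V,f\rangle \neq G$ for $G = \Homeo(X_{\alpha,n})$. The obstruction is precisely the net-transport invariant from your own third paragraph, so your two halves contradict each other. Extend your $t$ to all of $G$: for $g \in G$ let $\sigma(g) \in S_n$ be the induced permutation of the maximal points, and let $\tau(g)_i$ be the number of rank-$\beta$ points of $g(P_{\sigma(g)^{-1}(i)}) \setminus P_i$ minus the number of rank-$\beta$ points of $P_i \setminus g(P_{\sigma(g)^{-1}(i)})$; these counts are finite because each such symmetric difference is clopen and misses every maximal point, hence has rank at most $\beta$. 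Additivity of relative counts shows that $\Phi(g) = (\tau(g),\sigma(g))$ defines a homomorphism $\Phi\colon G \to \Z^n \rtimes S_n$ (with $S_n$ permuting coordinates). Now $\tau \equiv 0$ on $V$, since elements of $V$ carry each $P_j$ exactly onto some $P_i$; and $\tau(f) = 0$ as well, because your $f$ moves one $X_{\beta,1}$-chunk out of and one into each of $P_1$ and $P_2$, so each coordinate is $1 - 1 = 0$. Hence $\langle V, f\rangle \subseteq \Phi^{-1}(\{0\}\rtimes S_n)$, which is a proper subgroup: a one-way shift carrying a single $X_{\beta,1}$-chunk from $P_1$ into $P_2$ while fixing all maximal points has $\tau = e_2 - e_1 \neq 0$ (here $e_i$ is the $i$-th standard basis vector). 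This is exactly where your connectivity step breaks: the claim that a discrepancy is ``transportable between pieces by an element of $VFV$'' is false, since every element of $VfV$ moves chunks both ways with zero net transfer. Nor can a cleverer involution of this kind save the argument: any involution $f$ fixing the maximal points satisfies $2t(f) = t(f^2) = 0$, so $t(f) = 0$. Indeed, your part three --- which correctly asserts that $e_i - e_j$ lies in the image of $t$ on $K$, realized by one-way transports --- directly contradicts your part one: if $G = \langle V,f\rangle$ held, $t$ would vanish identically on $K$.

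The repair is exactly the paper's choice in \Cref{CARprop:successor-CAR}: take for $F$ a one-way maximal shift $g$ together with $g^{-1}$; symmetry of $F$ is restored by adjoining the inverse, not by demanding $f^2 = \id$. With that $F$, your discrepancy analysis ($E_i = P_i \triangle g(P_i)$ is clopen of rank at most $\beta$, so carries finitely many rank-$\beta$ points, which can be moved one at a time) is sound and is essentially the paper's connectivity argument in \Cref{CARprop:successor}, except for one case you elide: when $P_i \cap Q_j$ is nonempty but contains no rank-$\beta$ point, a single shift does not suffice and the paper spends two. Your non-coarse-boundedness argument is correct in substance and is a genuinely different route from the paper's, which instead observes that the graph has infinite diameter (distance is bounded below by the number of rank-$\beta$ discrepancies) and invokes \Cref{milnorschwarz}; however, your final reduction from ``$K$ is not coarsely bounded'' to ``$G$ is not coarsely bounded'' via finite-index coarse equivalence needs justification, and is avoidable: the homomorphism $\Phi$ above is defined on all of $G$, is continuous because its kernel contains the open subgroup $\{h \in G : h(P_i) = P_i \text{ for all } i\}$, and has unbounded image, so translation on $\Z^n \rtimes S_n$ gives a continuous isometric $G$-action with unbounded orbits.
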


Suppose that $X_{\alpha,n}$
is a countable Stone space,
where \(n \ge 2\) and \(\alpha = \beta + 1\) is a successor ordinal.
The space \(X_{\alpha,n}\) therefore has \(n\) maximal points.
Say that a partition \(\mathcal{P}\) is \emph{good} when it comprises exactly \(n\) clopen sets,
each containing a single maximal point.
Each clopen set in the partition is therefore homeomorphic to \(X_{\alpha,1}\).

We consider the operation of \emph{shifting} a good partition \(\mathcal{P}\):
choose a pair of maximal points \(x_i\) and \(x_j\),
write \(P_i\) and \(P_j\) for the clopen sets in the partition containing \(x_i\) and \(x_j\) respectively.
Remove from \(P_i\) a clopen subset homeomorphic to \(X_{\beta,1}\)
and add it to \(P_j\).
We say that two such partitions \emph{differ by a maximal shift.}

The prototypical example is when \(X \cong X_{2,1}\)
is the end compactification of \(\mathbb{Z}\).
Here \(\beta = 0\), so sets homeomorphic to \(X_{\beta,1}\) are single points.
One pair of good partitions \(\mathcal{P}\) and \(\mathcal{Q}\)
is given by \(\mathcal{P}=[-\infty,0] \sqcup [1,\infty]\)
and \(\mathcal{Q}=[-\infty,-1,] \sqcup [0,\infty]\).
The operation “add one” on \(\mathbb{Z}\) extends to a homeomorphism of the end compactification
which takes \(\mathcal{Q}\) to \(\mathcal{P}\).

Consider the graph \(\Gamma = \Gamma(\alpha,n)\)
whose vertices are the good partitions of \(X\),
where two vertices are connected by an edge when the corresponding partitions
differ by a maximal shift. See \Cref{fig:successgraphex} for an example. 

    \begin{figure}[ht!]\centering
        \def\svgwidth{\textwidth}
        \import{Figures/}{successgraphex.pdf_tex}
		\caption{An example of three partitions in $\G = \G(2,3)$. The red and purple vertices are connected by an edge as they differ by a maximal shift. However, the teal vertex is not connected by an edge to either or the red or purple vertices since while it may appear to differ by a ``shift," it does not differ by a maximal shift.} 
        \label{fig:successgraphex}
    \end{figure}

\begin{lem}\label{CARprop:successor}
When \(n \ge 2\) and \(\alpha = \beta + 1\) is a successor ordinal,
the graph \(\Gamma(\alpha,n)\) is connected and has infinite diameter.
\end{lem}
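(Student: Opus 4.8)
The plan is to establish the two assertions by separate arguments: infinite diameter through a $1$-Lipschitz integer invariant, and connectivity by reducing every good partition to a fixed reference.

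I would first record the local structure of the rank-$\beta$ points. Since $\alpha = \beta+1$, we have $(X^\beta)' = X^{\beta+1} = \{x_1,\dots,x_n\}$ and $X^{\beta+2} = \varnothing$, so $X^\beta \cong X_{1,n}$: the rank-$\beta$ points are discrete and accumulate only onto the $n$ maximal points. Fix a reference good partition $\mathcal{P}^0 = N_1 \sqcup \cdots \sqcup N_n$. For any good partition $\mathcal{P}$, both $P_i \cap X^\beta$ and $N_i \cap X^\beta$ are clopen neighborhoods of $x_i$ inside a copy of $\omega+1$, hence cofinite there, so $\mathcal{P}$ and $\mathcal{P}^0$ assign all but finitely many rank-$\beta$ points to the same piece. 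I would then define $c(\mathcal{P})$ to be the number of rank-$\beta$ points that $\mathcal{P}$ places in a different piece than $\mathcal{P}^0$; this is a finite nonnegative integer. A maximal shift removes a clopen set homeomorphic to $X_{\beta,1}$, which meets $X^\beta$ in exactly one point (its maximal point), and adds it to another piece, so it changes the placement of exactly one rank-$\beta$ point; hence $c$ changes by at most $1$ along each edge and is $1$-Lipschitz. Because there are infinitely many rank-$\beta$ points and $n \ge 2$, one can perform $m$ localized shifts, each moving a distinct correctly-placed rank-$\beta$ point into a wrong piece, to obtain $\mathcal{P}_m$ with $c(\mathcal{P}_m) = m$; then $d(\mathcal{P}^0,\mathcal{P}_m) \ge m$, proving infinite diameter.

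For connectivity it suffices, by vertex-transitivity, to connect an arbitrary good partition $\mathcal{P}$ to $\mathcal{P}^0$. I would proceed in two stages. First, correct the rank-$\beta$ placements: only finitely many rank-$\beta$ points are misplaced, and each such $y$ admits a clopen neighborhood homeomorphic to $X_{\beta,1}$ that meets $X^\beta$ only in $y$; a single maximal shift of that neighborhood moves $y$ into its $\mathcal{P}^0$-piece without disturbing the other rank-$\beta$ points and keeps every piece homeomorphic to $X_{\alpha,1}$. After finitely many shifts we reach $\mathcal{P}'$ agreeing with $\mathcal{P}^0$ on $X^\beta$. When $\beta = 0$ this already finishes the argument, since then $X_{\beta,1}$ is a single point, $X^\beta = X$, and agreeing on $X^\beta$ means $\mathcal{P}' = \mathcal{P}^0$; in particular this settles the base of an induction on $\beta$.

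The remaining stage, reconciling the distribution of the rank-$<\beta$ points once the rank-$\beta$ points agree, is where I expect the real difficulty to lie. The plan is to induct on $\beta$ (equivalently on $\alpha$). One would choose a homeomorphism $g$ with $g(\mathcal{P}^0) = \mathcal{P}'$ that fixes $X^\beta$ pointwise---such a $g$ exists because two copies of $X_{\alpha,1}$ with the same trace on $X^\beta$ are interchanged by a homeomorphism fixing that trace---so that $g$ is supported below rank $\beta$ and, inside each clopen $X_{\beta,1}$-tile surrounding a rank-$\beta$ point, induces a rearrangement that is an instance of the same partition problem one rank lower. Realizing such a rearrangement by maximal shifts forces one to move a tile out of its piece and a compensating tile back so that the net effect on $X^\beta$ is trivial, and it is exactly the bookkeeping of these compensations---keeping every intermediate partition good while invoking the inductive hypothesis for ordinals below $\alpha$---that forms the main obstacle. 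By contrast, the infinite-diameter half and the base case $\beta = 0$ are comparatively routine, so my effort would concentrate on making this lower-rank reduction precise.
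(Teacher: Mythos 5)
Your infinite-diameter argument is correct and is essentially the paper's own: a maximal shift changes the piece-assignment of exactly one rank-$\beta$ point, so the number of rank-$\beta$ points on which a good partition disagrees with the reference one is a $1$-Lipschitz function on $\Gamma(\alpha,n)$, and it is unbounded. Stage 1 of your connectivity argument (correcting the finitely many misplaced rank-$\beta$ points, one shift each) and the base case $\beta=0$ are also sound.

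The gap is Stage 2, which you explicitly leave open, and the induction you sketch for it would not transfer as stated: the inductive hypothesis concerns a shift-graph one rank lower, whose edges move sets homeomorphic to $X_{\gamma,1}$ with $\gamma<\beta$, but such moves are \emph{not} edges of $\Gamma(\alpha,n)$ --- every edge of $\Gamma(\alpha,n)$ must transport a set homeomorphic to $X_{\beta,1}$, hence must carry a rank-$\beta$ point. So a path supplied by the inductive hypothesis inside a tile is not a path in $\Gamma(\alpha,n)$, and the ``bookkeeping of compensations'' you defer is the entire problem. The missing idea, which removes any need for induction (and for your homeomorphism $g$ and its tiles), is an absorption trick justified by \Cref{thm:ctblestoneclassification}: if $C$ is clopen of rank $<\beta$ and $W\cong X_{\beta,1}$ is clopen and disjoint from $C$, then $W\sqcup C\cong X_{\beta,1}$. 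After your Stage 1 each discrepancy set $C=P'_i\cap N_j$ ($i\neq j$) is clopen of rank $<\beta$, since it contains no rank-$\beta$ or maximal points; pick a rank-$\beta$ point $y\in P'_i$, a clopen neighborhood $W\cong X_{\beta,1}$ of $y$ inside $P'_i\setminus N_j$, and shift $U=W\sqcup C$ from piece $i$ to piece $j$, then shift $W$ back. Two legal edges empty $P'_i\cap N_j$; doing this for every ordered pair $(i,j)$ forces $P''_i\subset N_i$ for all $i$, hence $P''_i=N_i$, and connectivity follows. Note there is never any need to rearrange points \emph{within} a piece: a vertex of $\Gamma(\alpha,n)$ only records which piece each point lies in. The paper runs this argument in one pass on an arbitrary pair of good partitions, without your Stage 1: for each ordered pair $(i,j)$, the clopen set $P_i\cap Q_j$ either contains $d_{ij}>0$ points of rank $\beta$, in which case it is homeomorphic to $X_{\beta,d_{ij}}$ and is emptied by $d_{ij}$ shifts, or it has rank $<\beta$ and is emptied by the two-shift trick above, giving an explicit path of length at most $d+2n(n-1)$.
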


\begin{proof}
Consider two good partitions \(\mathcal{P}\) and \(\mathcal{Q}\).
For each maximal point \(x_i\),
consider the clopen set \(R'_i = P_i \cap Q_i\),
where \(P_i\) and \(Q_i\) are the partition sets in \(\mathcal{P}\) and \(\mathcal{Q}\)
respectively containing \(x_i\).
The complement \(R'_0 = X - \coprod_{i = 1}^n R_i\)
is a clopen set containing none of the maximal points,
so it can contain at most finitely many, say \(d\), points of rank \(\beta\).

We will connect \(\mathcal{P}\) to \(\mathcal{Q}\) by a path of length at most \(d + 2n(n-1)\)
by beginning with \(\mathcal{P}\) and progressively altering it until we have produced \(\mathcal{Q}\).
This process involves considering each ordered pair of maximal points \(x_i\) and \(x_j\).
By only altering \(P_i\) and \(P_j\) by shifts,
we will make it so that the set of points in \(P_i\) which are in \(Q_j\) is empty.
If \(P_i\) contains some points of \(Q_j\),
the number of shifts we will use
is equal to either \(d_{ij}\), the number of rank \(\beta\) points in $P_i\cap Q_j$,
if \(d_{ij} > 0\), or to $2$.

Assuming \(d_{ij} > 0\),
the set of points in \(P_i \cap Q_j\) is homeomorphic to \(X_{\beta,d_{ij}}\);
choosing a ``good'' partition of this set,
i.e. one comprised of $d_{ij}$ sets, each containing exctly one point of rank $\beta$,
we can shift one element of this partition at a time
out of \(P_i\) and into \(P_{j}\),
producing a path of length \(d_{ij}\) from \(\mathcal{P}\)
to a new partition \(\mathcal{P}'\) which satisfies that \(P'_i \cap Q_j = \varnothing\). See \Cref{fig:Gammaconnected1} for an example of this.

\begin{figure}[ht!]\centering
        \def\svgwidth{\textwidth}
\begingroup%
  \makeatletter%
  \providecommand\color[2][]{%
    \errmessage{(Inkscape) Color is used for the text in Inkscape, but the package 'color.sty' is not loaded}%
    \renewcommand\color[2][]{}%
  }%
  \providecommand\transparent[1]{%
    \errmessage{(Inkscape) Transparency is used (non-zero) for the text in Inkscape, but the package 'transparent.sty' is not loaded}%
    \renewcommand\transparent[1]{}%
  }%
  \providecommand\rotatebox[2]{#2}%
  \newcommand*\fsize{\dimexpr\f@size pt\relax}%
  \newcommand*\lineheight[1]{\fontsize{\fsize}{#1\fsize}\selectfont}%
  \ifx\svgwidth\undefined%
    \setlength{\unitlength}{466.73658957bp}%
    \ifx\svgscale\undefined%
      \relax%
    \else%
      \setlength{\unitlength}{\unitlength * \real{\svgscale}}%
    \fi%
  \else%
    \setlength{\unitlength}{\svgwidth}%
  \fi%
  \global\let\svgwidth\undefined%
  \global\let\svgscale\undefined%
  \makeatother%
  \begin{picture}(1,0.40913116)%
    \lineheight{1}%
    \setlength\tabcolsep{0pt}%
    \put(0,0){\includegraphics[width=\unitlength,page=1]{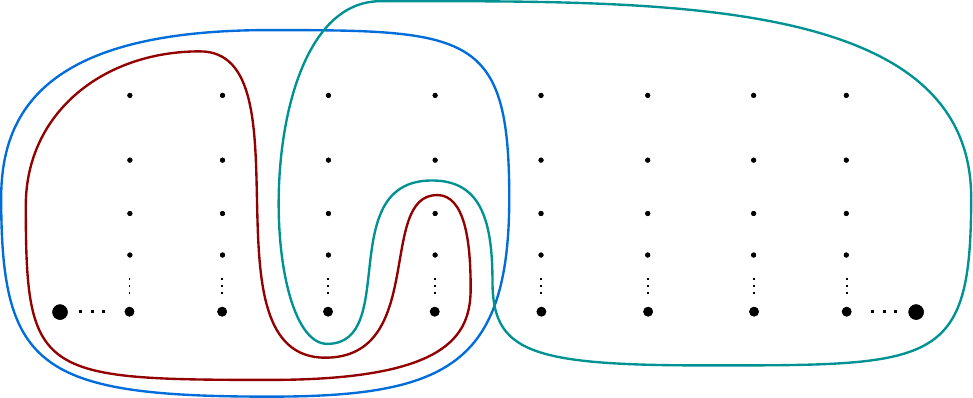}}%
    \put(0.85866411,0.39240103){\color[rgb]{0,0,0}\makebox(0,0)[lt]{\lineheight{1.25}\smash{\begin{tabular}[t]{l}$Q_{2}$\end{tabular}}}}%
    \put(0.14168949,0.39240103){\color[rgb]{0,0,0}\makebox(0,0)[lt]{\lineheight{1.25}\smash{\begin{tabular}[t]{l}$P_{1}$\end{tabular}}}}%
    \put(0.06328815,0.26854963){\color[rgb]{0,0,0}\makebox(0,0)[lt]{\lineheight{1.25}\smash{\begin{tabular}[t]{l}$P_{1}'$\end{tabular}}}}%
  \end{picture}%
\endgroup%

		\caption{An example of two partitions $\cP$, in blue, and $\cQ$, in teal, with $d_{12} =1$ in $\G(2,2)$. Pictured in red is $P_{1}'$ obtained by shifting $P_{1} \cap Q_{2}$ out of $P_{1}$ and into $P_{2}$. Note that the left maximal point is $x_{1}$ and the right is $x_{2}$.} 
        \label{fig:Gammaconnected1}
\end{figure}

If instead \(d_{ij} = 0\),
we may freely choose a single rank \(\beta\) point \(y\) of \(P_i\).
This point \(y\) has two clopen neighborhoods \(U\) and \(V\), both homeomorphic to \(X_{\beta,1}\),
such that \(U = V \sqcup (P_i \cap Q_j)\).
First shifting \(U\) out of \(P_i\) and into \(P_{j}\), and then shifting \(V\) back in
produces a path of length two in \(\Gamma\)
between \(\mathcal{P}\) and a new partition \(\mathcal{P}'\)
which satisfies that \(P'_i \cap Q_j = \varnothing\). See \Cref{fig:Gammaconnected2} for an example.

\begin{figure}[ht!]\centering
        \def\svgwidth{\textwidth}
        \import{Figures/}{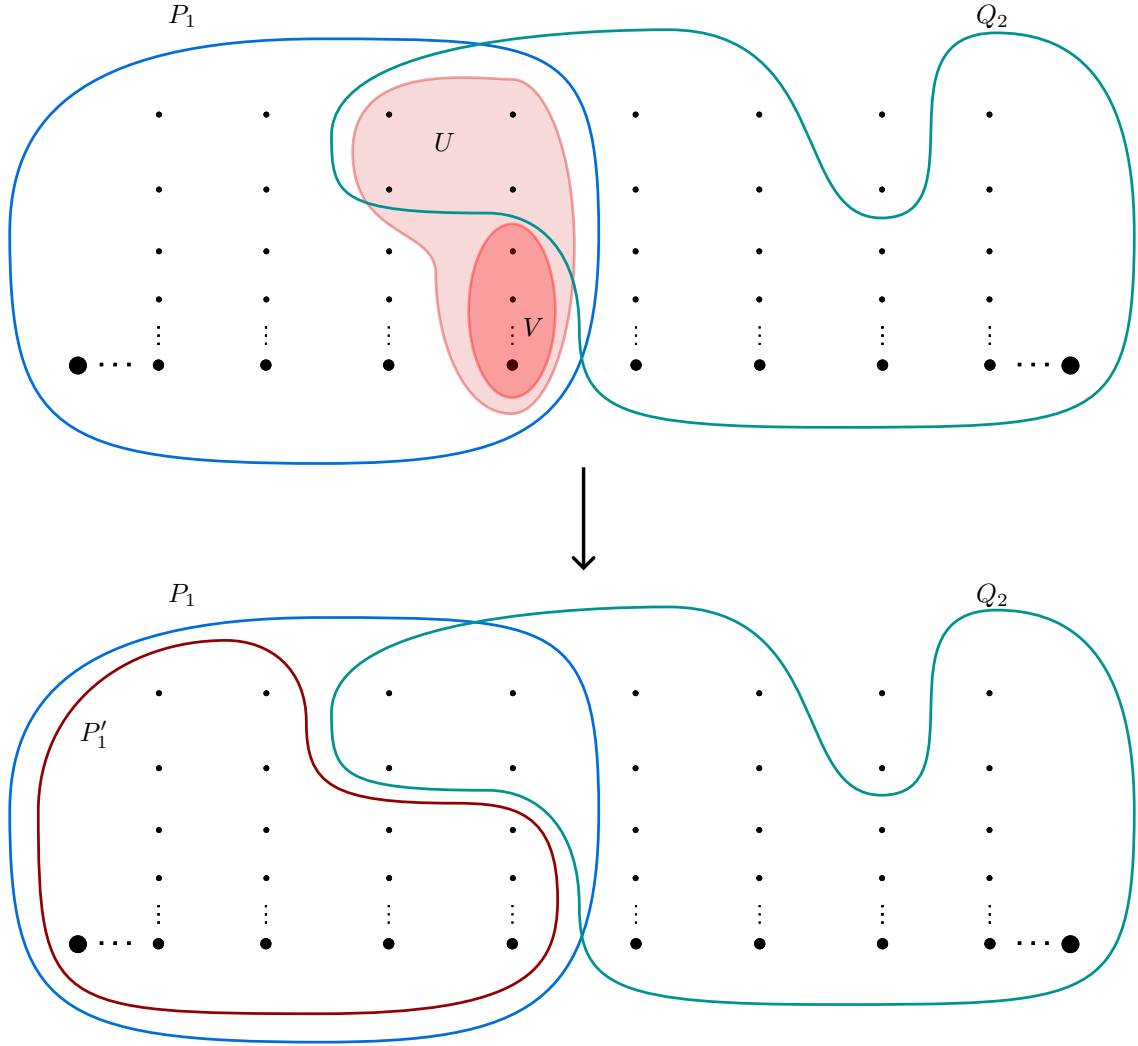}
		\caption{An example of two partitions $\cP$, in blue, and $\cQ$, in teal, with $d_{12} =0$ in $\G(2,2)$. The sets $U$ and $V$ are represented in the top figure. In the bottom figure we have, in red, $P_{1}'$ obtained by shifting $U$ out of $P_{1}$ and into $P_{2}$ and then shifting $V$ back. Again, the left maximal point is $x_{1}$ and the right is $x_{2}$.} 
        \label{fig:Gammaconnected2}
\end{figure}

Repeating this process for each ordered pair \((i,j)\) yields the desired path,
proving that \(\Gamma\) is connected.

To see that the graph \(\Gamma\) has infinite diameter,
notice that because a shift moves one point of rank \(\beta\) at a time,
if the set \(R'_0\) constructed above contains \(d\) points of rank \(\beta\),
any path from \(\mathcal{P}\) to \(\mathcal{Q}\)
must have length at least \(d\).
Any natural number \(d\) is realized as the number of rank $\beta$ points in $R'_0$, so \(\Gamma\) has infinite diameter.
(Put another way, this observation and the existence of the path above
proves that counting the number of rank \(\beta\) points
different between \(\mathcal{P}\) and \(\mathcal{Q}\)
is a coarse measure of their distance in \(\Gamma\).)
\end{proof}

Now, recall that for any good partition $\cP$,
the stabilizer $\Stab(\cP)$ is open and coarsely bounded.
The group $\Homeo(X_{\alpha,n})$
acts continuously and transitively on the set of good partitions.
Fix a good partition $\cP$
and a maximal shift $g \in \Homeo(X_{\alpha,n})$
which moves $\cP$,
and let $F = \{g^{\pm 1}\}$.

\begin{lem}\label{CARprop:successor-CAR}
The graph \(\Gamma\) is of the form \(\Gamma(\Stab(\cP),F)\), where \(F = \{g^{\pm 1}\}\)
and is thus a Cayley–Abels–Rosendal graph for \(\homeo(X_{\alpha,n})\).
\end{lem}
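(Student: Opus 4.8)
The plan is to produce a $G$-equivariant isomorphism between $\Gamma = \Gamma(\alpha,n)$ and the abstract graph $\Gamma(\Stab(\cP),F)$ constructed in \Cref{sec:CARgraphs}, and then to read off the defining properties of a Cayley--Abels--Rosendal graph from the general discussion there. Throughout write $G = \Homeo(X_{\alpha,n})$ and $V = \Stab(\cP)$. First I would identify the vertex sets. By the classification of countable Stone spaces (\Cref{thm:ctblestoneclassification}), any two good partitions are carried to one another by a homeomorphism: since each clopen set in either partition is homeomorphic to $X_{\alpha,1}$, one builds such a homeomorphism by choosing homeomorphisms between corresponding clopen sets and gluing. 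Thus $G$ acts transitively on good partitions with stabilizer $V$, and the orbit map $gV \mapsto g.\cP$ is a $G$-equivariant bijection from $G/V$ onto the vertex set of $\Gamma$.

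The heart of the argument, and the step I expect to be the main obstacle, is matching the two adjacency relations, which reduces to showing that $V$ acts transitively on the set of good partitions differing from $\cP$ by a single maximal shift. I would split this into two pieces. Because every clopen set of $\cP$ is homeomorphic to $X_{\alpha,1}$, any permutation of the partition sets is realized by an element of $V$ (again by gluing), so $V$ surjects onto $S_n$ and hence acts transitively on the ordered pairs $(i,j)$ recording the source and target of a shift. For a fixed such pair, given two shifts that remove clopen copies $C_1, C_2 \cong X_{\beta,1}$ from $P_i$ and adjoin them to $P_j$, I would invoke the classification once more: since $C_1 \cong C_2$ and the complements $P_i \setminus C_1$ and $P_i \setminus C_2$ both have characteristic pair $(\alpha,1)$ (removing a clopen set of lower rank leaves the unique rank-$\alpha$ point $x_i$ untouched), there is a homeomorphism of $P_i$ sending $C_1$ to $C_2$; extending it by the identity off $P_i$ gives an element of $V$ conjugating one shift to the other. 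Combining the two pieces yields a single $V$-orbit of neighbors of $\cP$, namely $V.(g.\cP)$.

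With transitivity in hand the matching of edges is formal: a vertex $h.\cP$ is a neighbor of $\cP$ precisely when $h.\cP = vg.\cP$ for some $v \in V$, i.e. when $h \in VgV$, i.e. when $VhV = VgV = VFV$; translating an arbitrary pair of vertices $g_1.\cP, g_2.\cP$ by $g_1^{-1}$ (which preserves the maximal-shift relation, being a homeomorphism) upgrades this to the adjacency criterion $g_1^{-1}g_2 \in VFV$ defining $\Gamma(V,F)$. Since the maximal-shift relation is symmetric, $g^{-1}.\cP$ is again a neighbor of $\cP$, so $Vg^{-1}V = VgV$ and the symmetric choice $F = \{g^{\pm 1}\}$ records exactly this one edge orbit; moreover $g \notin V$ because $g$ moves $\cP$, so $F$ meets $V$ trivially, as the construction requires. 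It then remains to verify the Cayley--Abels--Rosendal conditions for $\Gamma = \Gamma(V,F)$: connectedness holds by \Cref{CARprop:successor}, the subgroup $V = \Stab(\cP)$ is open and coarsely bounded as recalled above (via \Cref{cor:locallycb}), the finite set $F$ gives finitely many orbits of edges, and the open vertex stabilizers make the vertex-transitive action continuous. Applying \Cref{prop:CARgraph} then delivers the conclusion of \Cref{prop:successor}, that $G$ is boundedly generated and quasi-isometric to $\Gamma$.
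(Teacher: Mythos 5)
Your proposal is correct and follows essentially the same route as the paper: the paper likewise establishes vertex transitivity with stabilizer $\Stab(\cP)$ and then proves edge transitivity by exhibiting, for any two partitions adjacent to $\cP$, a homeomorphism preserving $\cP$ that carries one shifted copy of $X_{\beta,1}$ to the other, so that $F=\{g^{\pm 1}\}$ represents the single orbit of oriented edges. Your write-up simply fills in the details the paper leaves implicit (the $S_n$-surjection, the gluing construction via the classification theorem, and the double-coset bookkeeping identifying $\Gamma$ with $\Gamma(V,F)$).
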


\begin{proof}
By the paragraph above,
the group $\Homeo(X_{\alpha,n})$
acts transitively on $\Gamma(\alpha,n)$
with stabilizers conjugate to $\Stab(\cP)$,
which is open in \(\homeo(X_{\alpha,n})\) and coarsely bounded.
Now, supposing that \(\mathcal{Q}\) and \(\mathcal{Q}'\) are good partitions of \(X\)
which differ from \(\mathcal{P}\) by a shift,
notice that there is a homeomorphism preserving \(\mathcal{P}\)
taking one of the shifted sets, which is homeomorphic to \(X_{\beta,1}\), to the other. See \Cref{fig:Gamma2EdgesOrbit} for an example of such a homeomorphism.
Thus the group \(\homeo(X_{\alpha,n})\) acts edge-transitively on the graph \(\Gamma\),
and $F$ is therefore a complete set of representatives
for the orbits of oriented edges of \(\Gamma\).

\begin{figure}[ht!]\centering
        \def\svgwidth{\textwidth}
\begingroup%
  \makeatletter%
  \providecommand\color[2][]{%
    \errmessage{(Inkscape) Color is used for the text in Inkscape, but the package 'color.sty' is not loaded}%
    \renewcommand\color[2][]{}%
  }%
  \providecommand\transparent[1]{%
    \errmessage{(Inkscape) Transparency is used (non-zero) for the text in Inkscape, but the package 'transparent.sty' is not loaded}%
    \renewcommand\transparent[1]{}%
  }%
  \providecommand\rotatebox[2]{#2}%
  \newcommand*\fsize{\dimexpr\f@size pt\relax}%
  \newcommand*\lineheight[1]{\fontsize{\fsize}{#1\fsize}\selectfont}%
  \ifx\svgwidth\undefined%
    \setlength{\unitlength}{440.43471449bp}%
    \ifx\svgscale\undefined%
      \relax%
    \else%
      \setlength{\unitlength}{\unitlength * \real{\svgscale}}%
    \fi%
  \else%
    \setlength{\unitlength}{\svgwidth}%
  \fi%
  \global\let\svgwidth\undefined%
  \global\let\svgscale\undefined%
  \makeatother%
  \begin{picture}(1,0.4660881)%
    \lineheight{1}%
    \setlength\tabcolsep{0pt}%
    \put(0,0){\includegraphics[width=\unitlength,page=1]{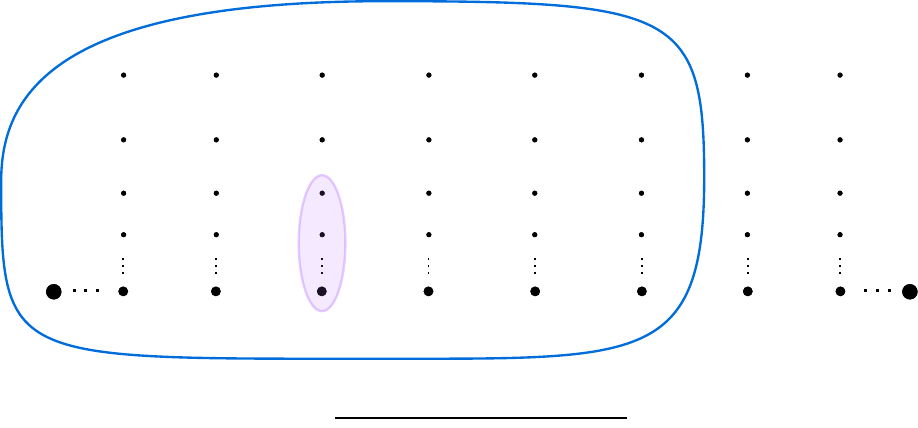}}%
    \put(0.62981918,0.10840986){\color[rgb]{0,0,0}\makebox(0,0)[lt]{\lineheight{1.25}\smash{\begin{tabular}[t]{l}$\tau$\end{tabular}}}}%
    \put(0,0){\includegraphics[width=\unitlength,page=2]{Gamma2EdgesOrbit.pdf}}%
    \put(0.52750505,0.03199192){\color[rgb]{0,0,0}\makebox(0,0)[lt]{\lineheight{1.25}\smash{\begin{tabular}[t]{l}$\mathcal{P}$\end{tabular}}}}%
    \put(0.68644712,0.03199192){\color[rgb]{0,0,0}\makebox(0,0)[lt]{\lineheight{1.25}\smash{\begin{tabular}[t]{l}$\mathcal{R}$\end{tabular}}}}%
    \put(0.36436644,0.03199192){\color[rgb]{0,0,0}\makebox(0,0)[lt]{\lineheight{1.25}\smash{\begin{tabular}[t]{l}$\mathcal{S}$\end{tabular}}}}%
    \put(0,0){\includegraphics[width=\unitlength,page=3]{Gamma2EdgesOrbit.pdf}}%
  \end{picture}%
\endgroup%

		\caption{An example of two edges in $\G(2,2)$ sharing the vertex $\cP$ and the map $\tau \in \Stab(\cP)$ that maps one edge to the other.} 
        \label{fig:Gamma2EdgesOrbit}
\end{figure}
\end{proof}

Interpreting the statements of \Cref{CARprop:successor} and \Cref{CARprop:successor-CAR} proves the classification for successor ordinals given in \Cref{prop:successor}.

\begin{proof}[Proof of \Cref{prop:successor}]
    \sloppy Because the graph $\Gamma(\alpha,n)$ is a Cayley-Abels-Rosendal graph for $\Homeo(X_{\alpha,n})$, the group is boundedly generated. Because the graph has infinite diameter, the group is not coarsely bounded.
\end{proof}

\subsection{Limit Ordinals}\label{ssec:limitordinals}

Finally we turn to limit ordinals and prove the following.

\begin{prop}\label{prop:limitord}
    Let $\alpha$ be a limit ordinal. If $n \geq 2$, then $\Hoan$ is not boundedly generated. 
\end{prop}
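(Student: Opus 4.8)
The plan is to invoke \Cref{lem:countgen}: since $\Homeo(X_{\alpha,n})$ is Polish and, by \Cref{cor:locallycb}, locally bounded, it is boundedly generated if and only if every ascending chain of open subgroups exhausting it terminates after finitely many steps. I would therefore exhibit one exhausting ascending chain of open subgroups $G_1 \le G_2 \le \cdots$ with $G_k \ne \Homeo(X_{\alpha,n})$ for every $k$. To build the chain, first note that a countable limit ordinal has cofinality $\omega$, so I may fix an increasing sequence $\beta_1 < \beta_2 < \cdots$ with $\sup_k \beta_k = \alpha$. Fix a good partition $\cP = P_1 \sqcup \cdots \sqcup P_n$ with $x_i \in P_i$ and let $V = \Stab(\cP)$, which is open. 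Let $R_k$ be the set of homeomorphisms supported on some clopen subset of rank at most $\beta_k$, and set $G_k = \langle V, R_k\rangle$. Since $R_k \subseteq R_{k+1}$ the chain is ascending, and since each $G_k$ contains the open subgroup $V$, each $G_k$ is open.

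I claim $\bigcup_k G_k = \Homeo(X_{\alpha,n})$. Given $f$, let $\tau \in S_n$ be the permutation it induces on the maximal points and pick $v \in V$ inducing $\tau$; then $v^{-1}f$ fixes each maximal point and carries $\cP$ to a good partition with parts $P_i'' = v^{-1}f(P_i)$ satisfying $x_i \in P_i''$. For $i \ne j$ the clopen set $P_i \cap P_j''$ contains no maximal point, hence by compactness has rank strictly below $\alpha$, so rank at most $\beta_m$ for some $m$ uniform over the finitely many pairs (exactly the compactness step used in the proof of \Cref{CARprop:successor}). Shifting each such chunk from region $i$ to region $j$ by a homeomorphism supported on a clopen set of rank at most $\beta_m$ yields $h \in \langle R_m\rangle$ with $h(\cP) = v^{-1}f(\cP)$, whence $h^{-1}v^{-1}f \in V$ and $f \in V\langle R_m\rangle V \subseteq G_m$.

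The main step, and the crux where the limit hypothesis does all the work, is to show each $G_k$ is proper. Let $Y$ denote the closed set of points of $X_{\alpha,n}$ of rank greater than $\beta_k$, and let $\rho \colon \Homeo(X_{\alpha,n}) \to \Homeo(Y)$ be the restriction homomorphism $f \mapsto f|_Y$, which is well defined because rank is a homeomorphism invariant. Every element of $R_k$ is the identity on $Y$, so $\rho(R_k) = \{\id\}$; and every element of $V$ permutes the parts $P_i$, so $\rho(V)$ preserves the induced clopen partition $\{P_i \cap Y\}$ of $Y$. Consequently $\rho(G_k) = \langle \rho(V)\rangle$ preserves this partition as well. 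Now, using $n \ge 2$ to have a second region available, choose a point of $P_1$ of rank $\beta_k + 1$ (which exists since $\beta_k + 1 < \alpha$) with a clopen neighborhood $C \cong X_{\beta_k+1,1}$ avoiding $x_1$, a homeomorphic copy $C' \subseteq P_2$, and let $\phi_k$ be the involution exchanging $C$ and $C'$ and fixing everything else. Then $\phi_k \in R_{k+1} \subseteq G_{k+1}$, but $\phi_k|_Y$ sends the rank-$(\beta_k+1)$ top point of $C$, which lies in $P_1 \cap Y$, into $P_2 \cap Y$ while fixing $x_1 \in P_1 \cap Y$, so it does not preserve $\{P_i \cap Y\}$. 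Hence $\phi_k \notin G_k$.

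This makes each $G_k$ a proper open subgroup, so the exhausting chain $G_1 \le G_2 \le \cdots$ never terminates, and \Cref{lem:countgen} gives that $\Homeo(X_{\alpha,n})$ is not boundedly generated. I expect the only genuinely delicate points to be the two routine verifications of the exhaustion step (that clopen chunks avoiding the maximal points have rank below $\alpha$, by compactness, and that good partitions are joined by finitely many bounded-rank shifts, as in \Cref{CARprop:successor}); the conceptual content is entirely carried by the restriction map $\rho$ to the high-rank part $Y$, which detects that elements of $G_k$ can only rearrange rank-$(>\beta_k)$ mass in a partition-preserving way.
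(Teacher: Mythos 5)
Your proof is correct, but it builds the exhausting chain quite differently from the paper. Both arguments reduce to \Cref{lem:countgen} (via \Cref{cor:locallycb}), but the paper defines a relative height function $h(\cP,\cQ)$ on good partitions---the supremum of the ranks $\beta$ realized in the symmetric differences $P_i \triangle Q_i$---proves it takes values strictly below $\alpha$ and satisfies an ultrametric triangle inequality, and then takes its chain to be the stabilizers of the sublevel sets $\fP_\beta$; with that formalism, openness, nesting, and exhaustion of the chain are essentially formal, and properness is witnessed by a homeomorphism moving a single high-rank point across the partition. You instead build the chain from below, $G_k = \langle \Stab(\cP), R_k\rangle$ with $R_k$ the homeomorphisms supported on clopen sets of rank at most $\beta_k$, which makes openness and nesting trivial but concentrates the work in two places: exhaustion now requires a genuine factorization lemma (every homeomorphism fixing the maximal points is, modulo $\Stab(\cP)$, a product of bounded-rank shifts), and properness requires a new idea, your restriction homomorphism $\rho$ to $Y = X^{(\beta_k+1)}$, under which $R_k$ dies and $\Stab(\cP)$ acts partition-preservingly; this is an elegant substitute for the paper's height function and carries the real content of your argument. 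Note that both properness witnesses are the same kind of element: an involution swapping a clopen copy of $X_{\beta_k+1,1}$ in $P_1$ with one in $P_2$. One detail needs repair: a chunk of rank at most $\beta_m$ cannot be shifted by a homeomorphism supported on a clopen set of rank at most $\beta_m$ (counting points of maximal rank in the support shows the support must have rank at least one larger; one shifts by absorbing the chunk into an auxiliary clopen copy of $X_{\beta_m+1,1}$ in each of the two parts involved), so your $h$ lies in $\langle R_{m'}\rangle$ only for $m'$ with $\beta_{m'} \ge \beta_m + 1$. Since the $\beta_k$ are cofinal in $\alpha$ and you only need $f \in G_{m'}$ for \emph{some} $m'$, this off-by-one is harmless.
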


We will prove this by using \Cref{lem:countgen}. That is, we will show that $\Hoan$ is a union of a countably infinite chain of proper open subgroups. We again label the maximal points of $X_{\alpha,n}$ as $x_{1},\ldots,x_{n}$ and let $\fP$ be the set of all good partitions. We will define this chain of subgroups by first defining a height function on $\fP$. Then the subgroups will be defined as stabilizers of sublevel sets of this height function. 

\begin{remark}
    Note that the set $\fP$ is defined in the same way as our vertex set in the previous section. One interpretation of the arguments in this section is that we are showing that there is no way to make a Cayley-Abels-Rosendal graph out of this vertex set. Of course, the actual result we prove is stronger. It implies that there cannot exist a Cayley-Abels-Rosendal graph for \emph{any} choice of vertex set. 
\end{remark}

Given any clopen subset $A \subset X_{\alpha,n}$ and ordinal $\beta < \alpha$ we let $[A]_{\beta}$ denote the set of points of type $\beta$ in $A$. We now define a relative height function on partitions as follows. For $\cP,\cQ \in \fP$, 
\begin{align*}
    h(\cP,\cQ) \defeq \sup\left\{\beta \big\vert \beta < \alpha \text{ and } [P_{i}\triangle Q_{i}]_{\beta} \neq \emptyset \text{ for all } i=1,\ldots,n\right\}.
\end{align*}
We first check some basic properties of this function. 

\begin{lem}\label{lem:heightproperties}
    The function $h$ takes values strictly less than $\alpha$, is $\Hoan$-equivariant, and satisfies a strong triangle inequality. That is, for $\cP,\cQ,\cR \in \fP$, 
    \begin{align*}
        h(\cP,\cQ) \leq \max\{h(\cP,\cR),h(\cR,\cQ)\}.
    \end{align*}
\end{lem}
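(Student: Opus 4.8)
The plan is to read the defining supremum as the Cantor--Bendixson rank of the clopen \emph{disagreement set}
\[
  D(\cP,\cQ) \defeq \bigcup_{i=1}^{n}(P_i \triangle Q_i),
\]
the set of points assigned to different blocks by $\cP$ and $\cQ$: a point of rank $\beta$ lies in some $P_i\triangle Q_i$ exactly when the two partitions split it, so $h(\cP,\cQ)$ is simply the top rank occurring in $D(\cP,\cQ)$. Throughout I would lean on two standard facts about a clopen subset $A$ of a countable Stone space: that $A^\beta = A\cap X^\beta$ (so ranks of points of $A$ agree whether computed in $A$ or in $X$), and that $(A\cup B)^\beta = A^\beta\cup B^\beta$ for clopen $A,B$ (using that derived sets are decreasing to handle the limit stages), so that the rank of a finite clopen union is the \emph{maximum} of the ranks and this maximum is attained --- the latter by the same compactness/finite-intersection argument used in \Cref{thm:ctbleCanBenrank}.

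The bound $h<\alpha$ and equivariance are then quick. Each maximal point $x_j$ lies in $P_j\cap Q_j$ and in no other block of either partition, so $x_j\notin P_i\triangle Q_i$ for every $i$; hence $D(\cP,\cQ)$ is clopen and disjoint from $X^{\alpha}=\{x_1,\dots,x_n\}$, whence $D(\cP,\cQ)^{\alpha}=D(\cP,\cQ)\cap X^{\alpha}=\varnothing$ and every $\beta$ in the defining set is strictly below $\alpha$. For equivariance I would use that the disagreement set is manifestly natural: a point $x$ is split by $\cP,\cQ$ if and only if $\phi(x)$ is split by $\phi\cdot\cP,\phi\cdot\cQ$, regardless of the permutation $\phi$ induces on the maximal points, so $D(\phi\cdot\cP,\phi\cdot\cQ)=\phi\bigl(D(\cP,\cQ)\bigr)$; since $\phi$ is a homeomorphism it preserves every Cantor--Bendixson derivative and hence the rank, giving $h(\phi\cdot\cP,\phi\cdot\cQ)=h(\cP,\cQ)$.

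The strong triangle inequality is the heart of the matter, and I expect the only real subtlety to be getting the quantifiers right. The set-theoretic containment $P_i\triangle Q_i\subseteq(P_i\triangle R_i)\cup(R_i\triangle Q_i)$ holds for each $i$; taking the union over $i$ gives $D(\cP,\cQ)\subseteq D(\cP,\cR)\cup D(\cR,\cQ)$, and then monotonicity of rank together with $(A\cup B)^\beta=A^\beta\cup B^\beta$ yields
\[
  h(\cP,\cQ)=\operatorname{rank}D(\cP,\cQ)\le\operatorname{rank}\bigl(D(\cP,\cR)\cup D(\cR,\cQ)\bigr)=\max\{h(\cP,\cR),h(\cR,\cQ)\}.
\]
The point to be careful about is precisely this passage from the per-block symmetric differences to the whole disagreement region: one must take the rank of the \emph{union} $D(\cP,\cQ)$ rather than demand that a single rank $\beta$ witness a disagreement in every block simultaneously. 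With this reading $h(\cP,\cQ)\le\gamma$ says exactly that $\cP$ and $\cQ$ agree on all points of rank $>\gamma$, which is visibly an equivalence relation for each fixed $\gamma$ --- exactly the property the sublevel-set subgroups of the next step will need.
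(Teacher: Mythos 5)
Your proof is correct, and for the ultrametric inequality and the equivariance it is essentially the paper's own argument: the blockwise containment $P_i\triangle Q_i\subseteq(P_i\triangle R_i)\cup(R_i\triangle Q_i)$, aggregated over $i$, plus naturality of the disagreement set under homeomorphisms. For the bound $h<\alpha$ the two routes differ only in packaging: the paper supposes disagreement points of types $\beta_k\to\alpha$ exist, passes to a subsequence lying in a single $P_i\setminus Q_i$ and a single $Q_j$ with $j\ne i$, and derives the contradiction $x_j\in P_i$ from closedness of $P_i$; you instead observe that the clopen set $D(\cP,\cQ)$ is compact and misses $X^\alpha$, so its Cantor--Bendixson rank is attained and is $<\alpha$. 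Both hinge on the same compactness phenomenon, and yours is arguably cleaner.

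The subtlety you flagged is in fact the one place where you diverge from --- and repair --- the paper's text. The displayed definition of $h$ reads ``$[P_i\triangle Q_i]_\beta\ne\emptyset$ \emph{for all} $i$,'' whereas your $h$ is the top rank in the union $D(\cP,\cQ)$, i.e.\ the ``for some $i$'' reading. The two readings agree when $n=2$ (complementary blocks have equal symmetric differences) but not when $n\ge3$, and under the literal ``for all'' reading the lemma is \emph{false}: with $n=3$, take $\cR$ good, let $A\subset R_1$ and $B\subset R_2$ be clopen sets avoiding the maximal points and each containing a point of rank $\beta\ge1$, and set $\cP=(R_1\setminus A)\sqcup(R_2\cup A)\sqcup R_3$ and $\cQ=R_1\sqcup(R_2\setminus B)\sqcup(R_3\cup B)$. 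Then $P_1\triangle Q_1=A$, $P_2\triangle Q_2=A\cup B$, $P_3\triangle Q_3=B$ all contain rank-$\beta$ points, so $h(\cP,\cQ)\ge\beta$, while $P_3\triangle R_3=\emptyset=Q_1\triangle R_1$ forces the defining sets for $h(\cP,\cR)$ and $h(\cR,\cQ)$ to be empty. Correspondingly, the paper's proof of the inequality needs $[P_i\triangle R_i]_\beta$ and $[R_i\triangle Q_i]_\beta$ to vanish for the \emph{same} index $i$, which only the ``for some'' reading supplies, and the properness arguments in \Cref{lem:stabopen,lem:stabnested} (which move a single high-rank point between two blocks, leaving the remaining $n-2$ blocks untouched) likewise require your reading. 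So your interpretation is the one under which the lemma and the rest of the subsection are true, and your proof goes through with it.
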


\begin{proof}
    We first check that $h(\cP,\cQ) < \alpha$ for all $\cP,\cQ \in \fP$. Suppose, to the contrary, that it is not. Then, after passing to a subsequence, there would exist an increasing sequence $\beta_{1}<\beta_{2}<\cdots$ such that $\beta_{k} \rightarrow \alpha$ and an $i$ so that $[P_{i} \setminus Q_{i}]_{\beta_{k}}\neq \emptyset$ for all $k$. Let $z_{k} \in P_{i}\setminus Q_{i}$ be a point of type $\beta_{k}$ for each $k$. After passing to a further subsequence we may assume that these points are all contained in some $Q_{j}$ for $j\neq i$. However, the sequence $\{z_{k}\}$ must then accumulate onto $x_{j}$. This would then imply that $x_{j} \in P_{i}$ since $P_{i}$ is closed. This contradicts the choice of $P_{i}$. Therefore we conclude that the maximum is realized. 

    The function is $\Hoan$-equivariant since the group acts on $X_{\alpha,n}$ by homeomorphisms.

    Finally, to check the strong triangle inequality we use the triangle inequality for symmetric differences. That is, given three sets $A,B,C$, we have 
    \begin{align*}
        A \triangle B \subset (A \triangle C) \cup (C \triangle B).
    \end{align*}
    Therefore, given three partitions $\cP,\cQ,\cR \in \fP$ we have 
    \begin{align*}
        [P_{i} \triangle Q_{i}]_{\beta} \subset [P_{i} \triangle R_{i}]_{\beta} \cup [R_{i} \triangle Q_{i}]_{\beta}
    \end{align*}
    for all $i$ and $\beta$. Therefore, if both $[P_{i}\triangle R_{i}]_{\beta} = \emptyset$ and $[R_{i}\triangle Q_{i}]_{\beta} = \emptyset$, then $[P_{i}\triangle Q_{i}]_{\beta} = \emptyset$. 
\end{proof}

Next we fix a basepoint partition $\cP \in \fP$. This allows us to define a height function on $\fP$ by setting $h(\cQ) = h(\cP,\cQ)$. For any $\beta < \alpha$ we define the sublevel sets
\begin{align*}
    \fP_{\beta} \defeq \{\cQ \vert h(\cQ) \leq \beta \}.
\end{align*}
We claim that the stabilizers of these sublevel sets form a countable chain of proper open subgroups exhausting $\Hoan$. We break this down into several lemmas. 

\begin{lem}\label{lem:stabopen}
    For each $\beta < \alpha$, $\Stab(\fP_{\beta})$ is a proper open subgroup of $\Hoan$. 
\end{lem}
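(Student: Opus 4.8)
The plan is to prove the two assertions—openness and properness—separately. Openness will rest on the $\Hoan$-equivariance of the height function recorded in \Cref{lem:heightproperties}, while properness will follow from a direct construction that crucially uses the hypothesis $n \ge 2$. (That $\Stab(\fP_\beta)$ is a subgroup at all is automatic, being the setwise stabilizer of the subset $\fP_\beta \subset \fP$.)

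For openness, the key observation is that the basepoint stabilizer $\Stab(\cP)$ is already contained in $\Stab(\fP_\beta)$. Indeed, $\Stab(\cP)$ is the setwise stabilizer of the finite clopen partition $\cP$ and so is a basic open subgroup of $\Hoan$. If $g \in \Stab(\cP)$, then $g\cP = \cP$ as a vertex of $\fP$, so for every $\cQ \in \fP$ the equivariance of $h$ gives $h(g\cQ) = h(\cP, g\cQ) = h(g\cP, g\cQ) = h(\cP, \cQ) = h(\cQ)$, the middle equality being \Cref{lem:heightproperties} and the first using the defining convention $h(\cdot) = h(\cP,\cdot)$. Thus every element of $\Stab(\cP)$ preserves the height function exactly, hence preserves each sublevel set $\fP_\beta$. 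Since a subgroup containing an open subgroup is itself open (it is a union of translates of that subgroup), $\Stab(\fP_\beta)$ is open.

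For properness I would produce a homeomorphism that moves a partition of height at most $\beta$ to one of height exceeding $\beta$. Because $h(\cP,\cP)$ is the supremum of the empty set of ordinals, $h(\cP)$ is minimal, so $\cP \in \fP_\beta$. To leave $\fP_\beta$, fix $\gamma$ with $\beta < \gamma < \alpha$, which exists as $\alpha$ is a limit ordinal. For each $i$ choose a point $y_i \in P_i$ of rank $\gamma$ distinct from the maximal point $x_i$—possible since $P_i \cong X_{\alpha,1}$ and $\gamma < \alpha$—and a clopen neighborhood $S_i \subset P_i$ of $y_i$ avoiding $x_i$. The $S_i$ are pairwise disjoint, each carries a rank-$\gamma$ point, and none contains a maximal point. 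Reassigning them cyclically, $Q_i = (P_i \setminus S_i) \cup S_{i-1}$ with indices read modulo $n$, produces a good partition $\cQ$; here $n \ge 2$ is essential, as with a single part no genuine reassignment occurs. Then $P_i \triangle Q_i = S_i \sqcup S_{i-1}$ contains a rank-$\gamma$ point for every $i$, so $[P_i \triangle Q_i]_\gamma \ne \varnothing$ for all $i$ and $h(\cQ) \ge \gamma > \beta$, whence $\cQ \notin \fP_\beta$. Finally, since each $P_i$ and $Q_i$ is homeomorphic to $X_{\alpha,1}$ and contains the same maximal point $x_i$, one chooses homeomorphisms $P_i \to Q_i$ fixing $x_i$ and glues them into $g \in \Hoan$ with $g\cP = \cQ$. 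As $\cP \in \fP_\beta$ while $g\cP = \cQ \notin \fP_\beta$, the element $g$ does not stabilize $\fP_\beta$, so $\Stab(\fP_\beta) \ne \Hoan$.

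The routine verifications are genuinely routine; the one step requiring care is the properness construction. I expect the main obstacle to be checking cleanly that the cyclic reassignment yields a bona fide good partition—each $Q_i$ clopen and containing exactly the single maximal point $x_i$—and that the symmetric differences $P_i \triangle Q_i$ simultaneously witness rank $\gamma$ for \emph{every} index $i$. Forcing all $n$ of these differences to be nonempty at rank $\gamma$ is precisely what pushes $h(\cQ)$ above $\beta$, and it is exactly here that the assumption $n \ge 2$ does its work.
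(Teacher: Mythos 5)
Your proof is correct, and structurally it mirrors the paper's: openness via the containment $\Stab(\cP)\subseteq\Stab(\fP_{\beta})$, and properness via a homeomorphism carrying $\cP$ to a good partition of height exceeding $\beta$. The execution differs in both halves, in ways worth recording. For openness, the paper estimates $h(g\cQ)\le\max\{h(\cP,g\cP),h(g\cP,g\cQ)\}\le\beta$ using the strong triangle inequality of \Cref{lem:heightproperties}, whereas you use only equivariance together with $g\cP=\cP$ to get the exact equality $h(g\cQ)=h(\cQ)$; this bypasses the ultrametric inequality altogether. The more substantive difference is in properness. The paper takes for $g$ any homeomorphism moving a single point of rank $\beta'>\beta$ from $P_{1}$ to $P_{2}$; since the set whose supremum defines $h$ requires $[P_{i}\triangle Q_{i}]_{\beta'}\neq\emptyset$ for \emph{all} $i=1,\dots,n$, that single move only witnesses the indices $1$ and $2$ (these suffice when $n=2$, where $P_{1}\triangle Q_{1}=P_{2}\triangle Q_{2}$), so for $n\ge 3$ the paper's argument needs either a ``for some $i$'' reading of the definition or a modified $g$. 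Your cyclic reassignment $Q_{i}=(P_{i}\setminus S_{i})\cup S_{i-1}$ arranges $P_{i}\triangle Q_{i}=S_{i}\sqcup S_{i-1}$ to contain a rank-$\gamma$ point for \emph{every} index simultaneously, so your bound $h(\cQ)\ge\gamma>\beta$ holds against the definition exactly as written, for all $n\ge 2$. In short: same strategy, but your version is robust to the universal quantifier in the definition of $h$ and independent of the triangle inequality, which makes it the more careful of the two arguments.
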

\begin{proof}
    We first note that $\Stab(\fP_{\beta})$ is a proper subgroup. Indeed, if $g \in \Hoan$ is any homeomorphism mapping a point of type $\beta' > \beta$ from $P_{1}$ to $P_{2}$, then $h(g\cP) \geq \beta'$ and thus $g \notin \Stab(\fP_{\beta})$.

    We next check that $\Stab(\fP_{\beta})$ is an open subgroup. It suffices to see that $\Stab(\fP_{\beta})$ contains an open neighborhood of the identity. We claim that $\Stab(\cP) \subset \Stab(\fP_{\beta})$. Let $g \in \Stab(\cP)$ and $\cQ \in \fP_{\beta}$. By \Cref{lem:heightproperties} we have 
    \begin{align*}
        h(g\cQ) = h(\cP,g\cQ) \leq \sup\{ h(\cP,g\cP), h(g\cP,g\cQ)\} \leq \beta.
    \end{align*}
    The final inequality comes from the fact that $h(\cP,g\cP) = 0$ since $g \in \Stab(\cP)$ and the equivariance of $h$. We conclude that $g \in \Stab(\fP_{\beta})$ and hence $\Stab(\cP)\subset \Stab(\fP_{\beta})$.
\end{proof}

\begin{lem}\label{lem:stabnested}
    If $\delta < \beta < \alpha$, then $\Stab(\fP_{\delta})\lneq\Stab(\fP_{\beta})$. 
\end{lem}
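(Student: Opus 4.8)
The plan is to reduce the entire statement to the single characterization
\[
  \Stab(\fP_{\beta}) = \{\, g \in \Hoan : h(\cP, g\cP) \le \beta \,\},
\]
valid for every $\beta < \alpha$. Granting this for the moment, the nesting $\Stab(\fP_{\delta}) \subseteq \Stab(\fP_{\beta})$ is immediate, since $h(\cP, g\cP) \le \delta$ forces $h(\cP, g\cP) \le \beta$; only properness then requires genuine work.

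To prove the characterization I would first note that $\cP \in \fP_{\beta}$ because $h(\cP,\cP) = 0$ (as used already in \Cref{lem:stabopen}), so if $g \in \Stab(\fP_{\beta})$ then $g\cP \in g \cdot \fP_{\beta} = \fP_{\beta}$, i.e.\ $h(\cP, g\cP) \le \beta$. For the converse I would invoke \Cref{lem:heightproperties}. By $\Hoan$-equivariance, $g \cdot \fP_{\beta} = \{\, \cQ : h(g\cP, \cQ) \le \beta \,\}$, so it suffices to show that the level-$\beta$ sublevel sets of $h(\cP, -)$ and of $h(g\cP, -)$ coincide whenever $h(\cP, g\cP) \le \beta$. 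This is exactly where the strong (ultrametric) triangle inequality enters: for any $\cQ$ with $h(\cP, \cQ) \le \beta$ one gets $h(g\cP, \cQ) \le \max\{ h(g\cP, \cP), h(\cP, \cQ)\} \le \beta$, and symmetrically in the other direction, using $h(\cP, g\cP) = h(g\cP, \cP)$; these are the two inclusions.

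For properness I would exhibit, for each pair $\delta < \beta < \alpha$, an element $g \in \Stab(\fP_{\beta}) \setminus \Stab(\fP_{\delta})$ by arranging $h(\cP, g\cP) = \delta + 1$; note that $\delta + 1 \le \beta$ since $\delta < \beta$, and $\delta + 1 < \alpha$. Here the limit-ordinal hypothesis guarantees points of rank $\delta+1$: inside each part $P_i \cong X_{\alpha,1}$ choose a point $y_i$ of rank $\delta + 1$ (necessarily distinct from the maximal point $x_i$, which has rank $\alpha$) together with a clopen neighborhood $C_i \subseteq P_i$ of $y_i$ with $C_i \cap X^{\delta+1} = \{y_i\}$, so that $C_i \cong X_{\delta+1,1}$ and $x_i \notin C_i$. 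Because $n \ge 2$, I can let $g$ cyclically carry $C_i$ onto $C_{i+1}$ (indices mod $n$) by homeomorphisms and fix everything outside $\bigsqcup_i C_i$; this $g$ is a homeomorphism fixing every maximal point, so $(g\cP)_i = g(P_i)$ and $P_i \triangle g(P_i) = C_i \sqcup C_{i+1}$ has Cantor--Bendixson rank exactly $\delta + 1$ for every $i$. Thus $[P_i \triangle g(P_i)]_{\delta + 1} \neq \emptyset$ for all $i$ while no higher rank occurs, giving $h(\cP, g\cP) = \delta + 1$, and by the characterization $g$ lies in $\Stab(\fP_{\beta})$ but not in $\Stab(\fP_{\delta})$.

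The main obstacle I anticipate is the converse inclusion in the characterization: recognizing that membership in $\Stab(\fP_\beta)$ is governed solely by the single ordinal $h(\cP, g\cP)$, and that the strong triangle inequality of \Cref{lem:heightproperties} is precisely what forces the level-$\beta$ sublevel sets based at $\cP$ and at $g\cP$ to agree. The construction in the properness step is more hands-on, its only subtlety being that \emph{every} part must be moved simultaneously — which is why $n \ge 2$ and a genuine permutation of the $C_i$ (rather than a rearrangement within a single part) are needed, so that the symmetric differences are nonempty at rank $\delta + 1$ for all $i$ at once.
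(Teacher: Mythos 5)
Your proof is correct, and it splits into a half that matches the paper and a half that genuinely diverges. The containment $\Stab(\fP_\delta)\le\Stab(\fP_\beta)$ is obtained in the paper exactly as you obtain it: equivariance of $h$ plus the strong triangle inequality of \Cref{lem:heightproperties}, via $h(g\cQ)\le\max\{h(\cP,g\cP),h(g\cP,g\cQ)\}\le\beta$ for $\cQ\in\fP_\beta$; your two-sided characterization $\Stab(\fP_\beta)=\{g\in\Hoan : h(\cP,g\cP)\le\beta\}$ merely packages that computation, though it has the virtue of certifying that your properness witness actually lies in $\Stab(\fP_\beta)$, a point the paper handles by fiat (``if $g\in\Stab(\fP_\beta)$ is a homeomorphism that sends a point of type $\beta$ from $P_1$ to $P_2$\dots''). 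Where you differ is the witness for properness. The paper moves a single rank-$\beta$ point from $P_1$ to $P_2$, disturbing only two coordinates of the partition; you cyclically permute clopen copies $C_i\cong X_{\delta+1,1}$, one inside each part, so that $P_i\triangle g(P_i)=C_i\sqcup C_{i+1}$ contains rank-$(\delta+1)$ points for \emph{every} $i$ and nothing of higher rank. This extra care buys real robustness: the definition of $h$ as printed requires $[P_i\triangle Q_i]_{\beta'}\neq\emptyset$ for all $i=1,\ldots,n$, and under that literal reading the paper's two-coordinate shift has height $0$ whenever $n\ge 3$, hence never exits $\fP_\delta$; the paper's argument (and indeed its proof of the strong triangle inequality) is only valid under the ``for some $i$'' reading of that definition, while your witness has height exactly $\delta+1\in(\delta,\beta]$ under either reading. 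The remaining difference --- your witness sits at height $\delta+1$ where the paper's sits at height $\beta$ --- is immaterial, since any rank in $(\delta,\beta]$ separates the two stabilizers.
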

\begin{proof}
    Let $g \in \Stab(\fP_{\delta})$ and $\cQ \in \beta$. Again, by \Cref{lem:heightproperties} we have
    \begin{align*}
        h(g\cQ) = h(\cP,g\cQ) \leq \max\{ h(\cP,g\cP), h(g\cP,g\cQ)\} \leq \beta.
    \end{align*}
    Here we are using the fact that $g \in \Stab(\fP_{\delta})$ implies that $h(\cP,g\cP) \leq \delta$ and the equivariance of $h$. Therefore, $g \in \Stab(\fP_{\beta})$ and $\Stab(\fP_{\delta})<\Stab(\fP_{\beta})$.

    In order to see that $\Stab(\fP_{\delta})$ is a proper subgroup of $\Stab(\fP_{\beta})$ we note that if $g \in \Stab(\fP_{\beta})$ is a homeomorphism that sends a point of type $\beta$ from $P_{1}$ to $P_{2}$ then $g\cP \notin \fP_{\delta}$ and hence $g \notin \Stab(\fP_{\delta})$.
\end{proof}

\begin{lem}\label{lem:stabexhaust}
    We can write $\displaystyle\Hoan = \bigcup_{\beta<\alpha} \Stab(\fP_{\beta})$.
\end{lem}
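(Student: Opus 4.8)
The plan is to prove the nontrivial inclusion $\Hoan \subseteq \bigcup_{\beta < \alpha} \Stab(\fP_\beta)$; the reverse is immediate, since each $\Stab(\fP_\beta)$ is by construction a subgroup of $\Hoan$. The key idea is that for an arbitrary $g \in \Hoan$ the correct sublevel set to test is the one at level $\gamma \defeq h(\cP, g\cP)$, which is a genuine ordinal strictly below $\alpha$ by \Cref{lem:heightproperties}. I would then claim that $g \in \Stab(\fP_\gamma)$, which proves the lemma.

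To establish the claim I would reuse, essentially verbatim, the computation already carried out in \Cref{lem:stabopen} and \Cref{lem:stabnested}. For any $\cQ \in \fP_\gamma$, combining the strong triangle inequality with the equivariance of $h$ gives
\[
  h(g\cQ) = h(\cP, g\cQ) \leq \max\{h(\cP, g\cP),\, h(g\cP, g\cQ)\} = \max\{\gamma,\, h(\cQ)\} \leq \gamma,
\]
so that $g$ carries $\fP_\gamma$ into itself.

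Because $\Stab(\fP_\gamma)$ is a \emph{setwise} stabilizer, the one point needing a little extra care---and the step I would flag for the reader---is checking that $g^{-1}$ preserves $\fP_\gamma$ as well, rather than merely that $g$ does. For this I would first observe that $h$ is symmetric in its two arguments, being built from the symmetric differences $P_i \triangle Q_i$; combined with equivariance this yields $h(\cP, g^{-1}\cP) = h(g\cP, \cP) = h(\cP, g\cP) = \gamma$. Rerunning the displayed estimate with $g^{-1}$ in place of $g$ then shows $g^{-1}\fP_\gamma \subseteq \fP_\gamma$, and hence $g\fP_\gamma = \fP_\gamma$.

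I do not expect any serious obstacle here: once one spots that $\gamma = h(\cP, g\cP)$ is the right level, the ultrametric-type inequality of \Cref{lem:heightproperties} does all the work of preventing the height from growing under a fixed homeomorphism, and the remainder is pure bookkeeping. It is worth noting that this lemma does not itself use that $\alpha$ is a limit ordinal; that hypothesis enters only afterward, when the chain $\{\Stab(\fP_\beta)\}_{\beta<\alpha}$ is fed into \Cref{lem:countgen} to conclude that $\Hoan$ fails to be boundedly generated.
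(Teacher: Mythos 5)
Your proof is correct and follows essentially the same route as the paper's: choose the level $\gamma = h(\cP,g\cP) < \alpha$ (the paper calls it $\beta$) and run the ultrametric estimate from \Cref{lem:heightproperties} to show $g\fP_\gamma \subseteq \fP_\gamma$. Your additional check that $g^{-1}$ also preserves $\fP_\gamma$ (via symmetry and equivariance of $h$) addresses a point the paper's proof leaves implicit---it only verifies the inclusion $g\fP_\beta \subseteq \fP_\beta$, not the equality required of a setwise stabilizer---so this is a welcome refinement rather than a different approach.
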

\begin{proof}
    Let $g \in \Hoan$. By \Cref{lem:heightproperties}, $h$ takes values strictly less than $\alpha$, so there exists some $\beta < \alpha$ so that $h(gP) = \beta$. We claim also that $g \in \Stab(\fP_{\beta})$. Indeed, by \Cref{lem:heightproperties}, for $\cQ \in \fP_{\beta}$ we again have 
    \begin{align*}
        h(g\cQ) = h(\cP,g\cQ) \leq \max\{ h(\cP,g\cP), h(g\cP,g\cQ)\} \leq \beta.
    \end{align*}
    We conclude that $g \in \Stab(\fP_{\beta})$ and hence $\Hoan$ is equal to the countable union $\bigcup_{\beta<\alpha} \Stab(\fP_{\beta})$. 
\end{proof}

These three lemmas taken together provide a proof of \Cref{prop:limitord}. 

\begin{proof}[Proof of \Cref{prop:limitord}]
    The three lemmas above exactly allow one to write $\Hoan$ as the countable union of the proper open subgroups $\{\Stab(\fP_{\beta})\}_{\beta<\alpha}$. Therefore, by \Cref{lem:countgen}, we have that $\Hoan$ does not have a coarsely bounded generating set. 
\end{proof}

\bibliographystyle{alpha}
\bibliography{bib.bib}

\begin{thebibliography}{CCMT15}

\bibitem[ADM22]{ADM2022}
Hiroshi Ando, Michal Doucha, and Yasumichi Matsuzawa.
\newblock Large scale geometry of {B}anach-{L}ie groups.
\newblock {\em Trans. Amer. Math. Soc.}, 375(4):2827--2881, 2022.

\bibitem[Bir36]{Birkhoff}
Garrett Birkhoff.
\newblock A note on topological groups.
\newblock {\em Compositio Math.}, 3:427--430, 1936.

\bibitem[BL24]{BranmanLyman}
Beth {Branman} and {Robert Alonzo} {Lyman}.
\newblock {The complex of cuts in a Stone space}.
\newblock available at arXiv:2408.06994 [math.GT], August 2024.

\bibitem[CCMT15]{CCMT2015}
Pierre-Emmanuel Caprace, Yves Cornulier, Nicolas Monod, and Romain Tessera.
\newblock Amenable hyperbolic groups.
\newblock {\em J. Eur. Math. Soc. (JEMS)}, 17(11):2903--2947, 2015.

\bibitem[CdlH16]{CdH2016}
Yves Cornulier and Pierre de~la Harpe.
\newblock {\em Metric geometry of locally compact groups}, volume~25 of {\em EMS Tracts in Mathematics}.
\newblock European Mathematical Society (EMS), Z\"urich, 2016.
\newblock Winner of the 2016 EMS Monograph Award.

\bibitem[DHK22]{DHK2022}
George Domat, Hannah Hoganson, and Sanghoon Kwak.
\newblock The automorphism group of the infinite-rank free group is coarsely bounded.
\newblock {\em New York J. Math.}, 28:1506--1511, 2022.

\bibitem[DHK23a]{DHK2023}
George Domat, Hannah Hoganson, and Sanghoon Kwak.
\newblock Coarse geometry of pure mapping class groups of infinite graphs.
\newblock {\em Adv. Math.}, 413:Paper No. 108836, 57, 2023.

\bibitem[DHK23b]{DHK2023_2}
George Domat, Hannah Hoganson, and Sanghoon Kwak.
\newblock Generating sets and algebraic properties of pure mapping class groups of infinite graphs.
\newblock {\em arXiv preprint arXiv:2309.07885}, 2023.
\newblock To appear in {\it{A}nnales {H}enri {L}ebesgue}.

\bibitem[Duc23]{Duchesne2023}
Bruno Duchesne.
\newblock The {P}olish topology of the isometry group of the infinite-dimensional hyperbolic space.
\newblock {\em Groups Geom. Dyn.}, 17(2):633--670, 2023.

\bibitem[HKR23]{HKR2023}
Thomas {Hill}, Sanghoon {Kwak}, and Rebecca {Rechkin}.
\newblock {Coarsely bounded generating sets for mapping class group of infinite-type surfaces}.
\newblock available at arXiv:2312.02361 [math.GT], December 2023.

\bibitem[JM23]{RM2023}
Rita {Jim{\'e}nez Rolland} and Israel {Morales}.
\newblock {On the large scale geometry of big mapping class groups of surfaces with a unique maximal end}.
\newblock available at arXiv:2309.05820 [math.GT], September 2023.

\bibitem[Kak36]{Kakutani}
Shizuo Kakutani.
\newblock \"{U}ber die {M}etrisation der topologischen {G}ruppen.
\newblock {\em Proc. Imp. Acad. Tokyo}, 12(4):82--84, 1936.

\bibitem[Kec95]{Kechris1995}
Alexander~S. Kechris.
\newblock {\em Classical descriptive set theory}, volume 156 of {\em Graduate Texts in Mathematics}.
\newblock Springer-Verlag, New York, 1995.

\bibitem[Led22]{Lederle2022}
Waltraud Lederle.
\newblock Cayley-abels graphs: Local and global perspectives.
\newblock {\em arXiv preprint arXiv:2210.16284}, 2022.

\bibitem[LM21]{LeMaitre2021}
Fran\c{c}ois Le~Ma\^itre.
\newblock On a measurable analogue of small topological full groups {II}.
\newblock {\em Ann. Inst. Fourier (Grenoble)}, 71(5):1885--1927, 2021.

\bibitem[Mil11]{Milliet2011}
C{\'e}dric Milliet.
\newblock A remark on cantor derivative.
\newblock {\em arXiv preprint arXiv:1104.0287}, 2011.

\bibitem[MR18]{MR2018}
Kathryn Mann and Christian Rosendal.
\newblock Large-scale geometry of homeomorphism groups.
\newblock {\em Ergodic Theory Dynam. Systems}, 38(7):2748--2779, 2018.

\bibitem[MR23]{MR2023}
Kathryn Mann and Kasra Rafi.
\newblock Large-scale geometry of big mapping class groups.
\newblock {\em Geom. Topol.}, 27(6):2237--2296, 2023.

\bibitem[MS20]{MS1920}
Stefan Mazurkiewicz and Wacław Sierpiński.
\newblock Contribution à la topologie des ensembles dénombrables.
\newblock {\em Fundamenta Mathematicae}, 1(1):17--27, 1920.

\bibitem[Mun00]{munkres2000topology}
J.R. Munkres.
\newblock {\em Topology}.
\newblock Featured Titles for Topology. Prentice Hall, Incorporated, 2000.

\bibitem[Pet50]{Pettis}
B.~J. Pettis.
\newblock On continuity and openness of homomorphisms in topological groups.
\newblock {\em Ann. of Math. (2)}, 52:293--308, 1950.

\bibitem[Roe03]{Roe}
John Roe.
\newblock {\em Lectures on coarse geometry}, volume~31 of {\em University Lecture Series}.
\newblock American Mathematical Society, Providence, RI, 2003.

\bibitem[Ros22]{Rosendal}
Christian Rosendal.
\newblock {\em Coarse geometry of topological groups}, volume 223 of {\em Cambridge Tracts in Mathematics}.
\newblock Cambridge University Press, Cambridge, 2022.

\bibitem[SC24]{SC2024}
Anschel Schaffer-Cohen.
\newblock Graphs of curves and arcs quasi-isometric to big mapping class groups.
\newblock {\em Groups Geom. Dyn.}, 18(2):705--735, 2024.

\bibitem[Uda24]{Udall2024}
Brian Udall.
\newblock The sphere complex of a locally finite graph.
\newblock {\em arXiv preprint arXiv:2407.07976}, 2024.

\bibitem[Vla24]{Vlamis2024}
Nicholas~G Vlamis.
\newblock Homeomorphism groups of self-similar 2-manifolds.
\newblock In {\em In the Tradition of Thurston III: Geometry and Dynamics}, pages 105--167. Springer, 2024.

\end{thebibliography}
\end{document}